\newtheorem{thm}{Theorem}[section]
\newtheorem{prop}[thm]{Proposition}
\newtheorem{defn}[thm]{Definition}
\newtheorem{lemma}[thm]{Lemma}
\newtheorem{conj}[thm]{Conjecture}
\newtheorem{cor}[thm]{Corollary}
\newtheorem{example}[thm]{Example}
\newcommand{\bmb}{\left( \begin{array}{rr}}
\newcommand{\enm}{\end{array}\right)}
\newcommand{\Z}{{\mathbb Z}}
\numberwithin{equation}{section}
\begin{document}

\title[20V model with Domain Wall Boundaries and Domino tilings]{Twenty-Vertex model with Domain Wall Boundaries  
\\and Domino tilings}
\author{Philippe Di Francesco} 
\address{
Department of Mathematics, University of Illinois, Urbana, IL 61821, U.S.A. 
and 
Institut de physique th\'eorique, Universit\'e Paris Saclay, 
CEA, CNRS, F-91191 Gif-sur-Yvette, FRANCE\hfill
\break  e-mail: philippe@illinois.edu
}
\author{Emmanuel Guitter}
\address{
Institut de physique th\'eorique, Universit\'e Paris Saclay, 
CEA, CNRS, F-91191 Gif-sur-Yvette, FRANCE.
\break  e-mail: emmanuel.guitter@ipht.fr
}

\begin{abstract}
We consider the triangular lattice ice model (20-Vertex model) with four types of domain-wall type boundary conditions. In types 1 and 2, the configurations are shown to be equinumerous to the quarter-turn symmetric domino tilings of an Aztec-like holey square, with a central cross-shaped hole. The proof of this statement makes extensive use of integrability and of a connection to the 6-Vertex model. The type 3 configurations are conjectured to be in same number as domino tilings of a particular triangle. The four enumeration problems are reformulated in terms of four types of Alternating Phase Matrices with entries $0$ and sixth roots of unity, subject to suitable alternation conditions. Our result is a generalization of the ASM-DPP correspondence. Several refined versions of the above correspondences are also discussed.

\end{abstract}

\maketitle
\date{\today}
\tableofcontents

\section{Introduction}
\label{sec:introduction}

Few combinatorial objects have fostered as many and as interesting developments as Alternating Sign Matrices (ASM).
As described in the beautiful saga told by Bressoud \cite{Bressoud}, these had a purely combinatorial life of their own 
from their discovery in the 80's by Mills, Robbins and Rumsey \cite{mills1983alternating} to their enumeration \cite{zeilberger1,zeilberger2} in  
relation to other intriguing
combinatorial objects such as plane partitions with maximal symmetries \cite{tsscpp} or descending plane partitions (DPP)\cite{AndrewsDPP,Lalonde,KrattDPP,BDFPZ1,BDFPZ2},
until they crashed against the tip of the iceberg of two-dimensional integrable lattice models of statistical physics
\cite{Baxter}. 
This allowed not only for an elegant proof of the so-called ASM conjecture \cite{kuperberg1996another} and its variations by changing
its symmetry classes \cite{kuperberg2002symmetry}, but set the stage for future developments, 
with some new input from physics of the underlying
six-vertex (6V) ``ice" model in the presence of special domain wall boundary conditions (DWBC), 
and its relations to a fully-packed loop gas, giving eventually rise to the Razumov-Stroganov 
conjecture \cite{RS}, finally proved by Cantini and Sportiello in 2010 \cite{CanSpo}. 

The ice model involves configurations on a domain of square lattice, obtained by orienting each individual edge
in such a way that the \emph{ice rule} is obeyed at each node, namely that exactly two edges point in and two edges point 
out of the node. It is now recognized that the statistical model
for two-dimensional ice, solved  in \cite{ice} is at the crossroads of many combinatorial wonders, in relation with
loop gases, osculating paths, rhombus and domino tiling problems, and even equivariant cohomology of the nilpotent 
matrix variety. Moreover, due to its inherent integrable structure, the model offers a panel of powerful techniques
for solving, such as transfer matrix techniques, the various available Bethe Ans\"atze, Izergin and Korepin's determinant,
the quantum Knikhnik-Zamolodchikov equation, etc. \cite{ice,Baxter,IKdet,qKZ,RSDFZJ,qKZDFZJ}.

In the present paper, we develop and study the combinatorics of the ice model on the regular triangular lattice, known as the 
Twenty-vertex (20V) model \cite{Kel,Baxter}. Focussing on the combinatorial content, we introduce particular 
``square" domains ($n\times n$ rhombi of the triangular lattice) with special boundary 
conditions meant to create domain walls, i.e. separations between domains of opposite directions of oriented edges,
in an attempt to imitate the 6V situation. Among the many possibilities offered by the triangular lattice geometry,
we found two particularly interesting classes of models, which we refer to as 20V-DWBC1,2 (where DWBC1 and DWBC2 are
two flavors of the same class viewed from different perspectives) and 20V-DWBC3. These are respectively enumerated by the sequences:
\begin{eqnarray} A_n&=&1,\ 3,\ 23,\ 433,\ 19705, \ 2151843, \ 561696335,\  349667866305...\qquad (\hbox{DWBC1,2})\label{seqone}\\
B_n&=&1,\, 3,\, 29, \, 901,\, 89893, \, 28793575 ... \qquad (\hbox{DWBC3})\label{seqtwo}
\end{eqnarray}
for $n=1,2,...$

From their definition, both models can be interpreted as generalizations of ASMs, in which non-zero entries may now
belong to the set of sixth roots of unity, and we shall refer to them as Alternating Phase Matrices (APM) of
types 1,2,3 respectively. To further study both sets of objects, we use the integrable version of the 20V model
\cite{Kel,Baxter} to (i) decorate the model's configurations with Boltzmann weights parameterized by
complex spectral parameters; and (ii) reformulate whenever possible the partition function. In this paper, we succeed in
performing this program in the case of 20V-DWBC1,2, which is eventually reformulated as an ordinary 6V-DWBC
model on a square grid, but with non-trivial Boltzmann weights $(a,b,c)=(1,\sqrt{2},1)$. The case of DWBC3
is more subtle as the model can be rephrased as a 6V model with staggered boundary conditions.

Among other possibilities, the 20V configurations may be represented as configurations of some non-intersecting 
paths with steps along the
edges of the triangular lattice, with the possibility of double or triple kissing (osculation) points at vertices. Individually,
the same
paths, once drawn on a straightened triangular lattice equal to the square lattice
supplemented with a second diagonal edge on each face, are nothing but the Schr\"oder paths on $\Z^2$, with 
horizontal, vertical and diagonal steps $(1,0)$, $(0,-1)$ and $(1,-1)$. These are intimately related to problems
of tiling of domains of the quare lattice by means of $1\times 2$ and $2\times 1$ dominos, such as the
celebrated Aztec diamond domino tiling problem \cite{CEP}, or the more recently considered Aztec rectangles with 
boundary defects \cite{BuKni,DFG3}.

Looking for candidates in the domino tiling world for being enumerated by the sequences $A_n$ or $B_n$, we 
found the two following remarkable models. 

In the case of $A_n$, the associated domino tiling problem is a natural generalization of the descending plane
partitions introduced by Andrews \cite{AndrewsDPP} and reformulated as the rhombus tiling model of a hexagon with a central
triangular hole, with $2\pi/3$ rotational symmetry \cite{Lalonde,KrattDPP}, which allows to interpret it also as rhombus tilings
of a cone. In the same spirit, we find that $A_n$ counts the \emph{domino tiling configurations of an Aztec-like, quasi-square domain, 
with a cross-shaped central hole, and with $\pi/2$ rotational symmetry}, or equivalently the domino tilings of a cone. The fact that 20V DWBC1,2 configurations 
and quarter-turn symmetric domino tilings of a holey square are enumerated by the sane sequence $A_n$ is proved in the present paper, together 
with a refinement, which parallels the refinement in the so-called ASM-DPP conjecture of Mills, Robbins and 
Rumsey \cite{mills1983alternating}. We use similar techniques to those of \cite{BDFPZ1,BDFPZ2}, namely identify 
both counting problems
as given by determinants of finite truncations of infinite matrices, whose generating functions are simply related.

In the case of $B_n$, a natural candidate was found by using the online encyclopedia of integer sequences (OEIS). The
number of domino tilings of a square of size $2n\times 2n$ is $2^n b_n^2$ \cite{Kasteleyn,sandpiles}, 
where $b_n$ itself counts the number
of domino tilings of a triangle obtained by splitting the square into two equal domains \cite{Patcher}. 
We found perfect agreement between our data for $B_n$ and $b_n$, which we conjecture to be equal for all $n$.
Despite many interesting properties of the model, we have not been able to  prove this correspondence.

The paper is organized as follows.

In Section \ref{sec:20VDWBC}, we introduce the 20V model and define a first class of models in its two flavors of 
domain wall boundary conditions DWBC$1,2$, conveniently expressed in terms of osculating Schr\"oder paths.
We also define the special integrable weights, parameterized by triples of complex spectral parameters $z,t,w$,
and obeying the celebrated Yang-Baxter relation.

These models are mapped onto the 6V-DWBC model (Theorem~\ref{An6V}) with anisotropy parameter $\Delta=1/\sqrt{2}$
in Section \ref{sec:20to6}, by use of the integrability property,
leading to compact formulas for the partition function and in particular the numbers $A_n$, in the form of some simple determinant, with known asymptotics leading to the free energy 
$f=\frac{3}{2}{\rm Log}\frac{4}{3}$ per site. 
The integrability of the model allows us moreover to keep track of a refinement of the number $A_n$
according to the number of occupied vertical edges in the last column of the domain, in the osculating Schr\"oder path 
formulation (Theorem~\ref{20to6}). The generating polynomial of the refined numbers is also given by some simple determinant.

Section \ref{sec:cone} is devoted to the definition and enumeration of the domino tiling problem of a cone corresponding to 
the sequence $A_n$, which generalizes the notion of Andrews' descending plane partitions in domino terms. 
After defining the tiling problem, we perform enumeration using a non-intersecting 
Schr\"oder path formulation and Gessel-Viennot determinants (Theorem~\ref{thm:T4}). We also introduce refinements in the same spirit as
the refinements of the DPP conjecture of \cite{mills1983alternating} (Theorem~\ref{refinT4}).

The equivalence between the enumerations in Sections \ref{sec:20to6} and   \ref{sec:cone} is proved
in Section \ref{sec:proof}, in the same spirit as the refined ASM-DPP proof of \cite{BDFPZ1}. We first evaluate the
homogeneous limit of the Izergin-Korepin determinantal formula for the 6V-DWBC partition function, and
write it in the form of the determinant of the finite $n\times n$ truncation of an infinite matrix independent of $n$.
We then identify this determinant with the Gessel-Viennot determinant of Section \ref{sec:cone} (Theorem~\ref{thm:Z20T4}). 
We also work out the refined version of this result,
by keeping one non-trivial spectral parameter in the 20V model, and identifying it in a special weighting of the
Schr\"oder path configurations for the domino tiling of the cone of Section \ref{sec:cone} (Theorem~\ref{thmref20VT4}).

We turn to other possible domain wall boundary conditions in Section  \ref{sec:other}.
We first define the 20V-DWBC3 model and formulate the Conjecture \ref{DWBC3conj} that its configurations are also enumerated
by the domino tilings of the triangle of \cite{Patcher}. We extend this to a sequence of models corresponding
to half-hexagonal shapes with the same boundary conditions, conjecturally enumerated by the domino tilings
of Aztec-like extensions of the former triangle (Conjecture \ref{pentaconj}). We complete the section with another possible DWBC4
for which no general conjecture was formulated.

In Section \ref{sec:apm}, we identify the various 20V-DWBC configurations considered in  this paper with sets 
of matrices with entries
made of \emph{triples} of elements in $\{0,1,-1\}$, or equivalently taking values among $0$ and
the sixth roots of unity, that generalize the notion of Alternating Sign Matrix. 

We gather a few concluding remarks in Section~\ref{sec:conclusion}.

\medskip

\noindent{\bf Acknowledgments.}  

\noindent  

PDF is partially supported by the Morris and Gertrude Fine endowment and the NSF grant DMS18-02044. EG acknowledges the support of the grant ANR-14-CE25-0014 (ANR GRAAL).

\section{The 20V model with Domain Wall Boundary Conditions}
\label{sec:20VDWBC}

\subsection{Definition of the model: ice rule and osculating paths}
\label{sec:def20V}

\begin{figure}
\begin{center}
\includegraphics[width=11cm]{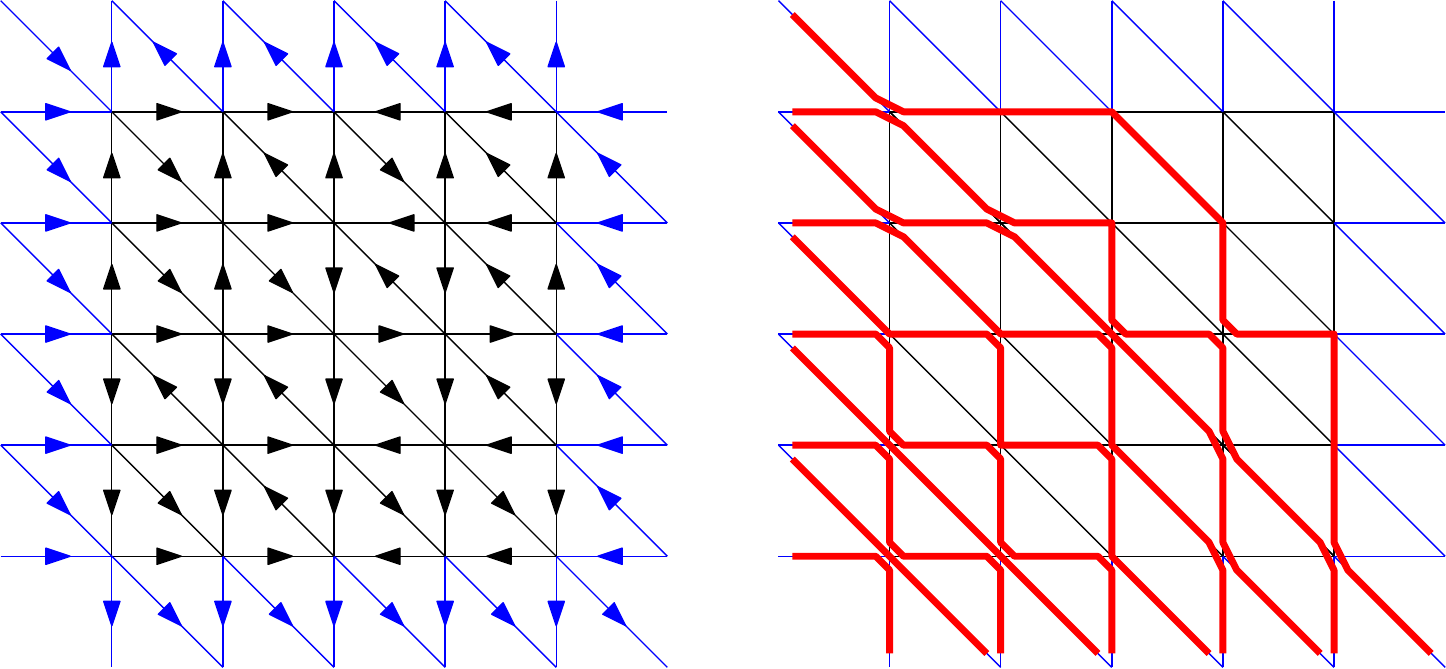}
\end{center}
\caption{\small Left: A sample configuration of a 20V model configuration with DWBC1. Right: The equivalent osculating path configuration.
}
\label{fig:DWBC1}
\end{figure}

\begin{figure}
\begin{center}
\includegraphics[width=11cm]{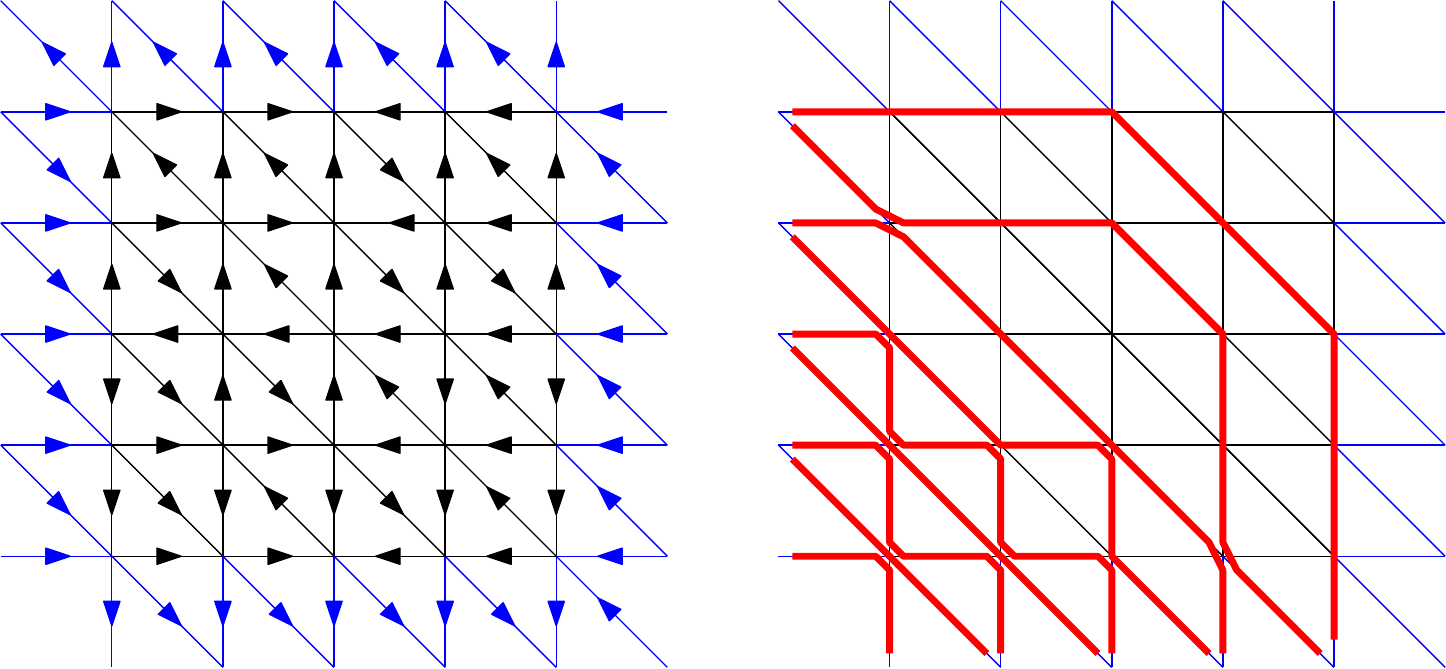}
\end{center}
\caption{\small Left: A sample configuration of a 20V model configuration with DWBC2. The boundary condition differs from that of Fig.~\ref{fig:DWBC1} by the orientation 
imposed on the upper-left and lower-right edges. Right: The equivalent osculating path configuration.
}
\label{fig:DWBC2}
\end{figure}

The combinatorial problem that we wish to address corresponds to a particular instance of the 20V model on a finite regular domain
with specific boundary conditions. The corresponding geometry is directly inspired from that of the 6V model on a portion
of square lattice with Domain Wall Boundary Conditions (DWBC) \cite{Korepin}, suitably adapted to the triangular lattice as follows:
we first straighten the triangular lattice into a square lattice supplemented with a second diagonal within each face. In this setting, the regular domain underlying our 20V model is an
$n\times n$ square portion of lattice\footnote{Before straightening, this domain is an $n\times n$ rhombus of the triangular lattice.},  whose vertices occupy all points with integer coordinates
$(i,j)$ for $i,j=1,2,\dots,n$. Its set of inner edges is made of all the elementary horizontal segments $(i,j)\to (i+1,j)$ ($i<n$) and vertical segments $(i,j)\to (i,j+1)$ ($j<n$) 
joining neighboring vertices, as well as all the second diagonals $(i,j+1)\to (i+1,j)$ ($i,j<n$). This edge set is completed by a set of \emph{oriented} boundary edges, 
with the following prescribed orientations:
\begin{itemize}
\item{West boundary: horizontal edges oriented from $(0,j)$ to $(1,j)$, $j=1,2,\dots,n$ and diagonal edges oriented from $(0,j+1)$ to $(1,j)$ for 
$j=1,2,\dots n-1$;}
\item{South boundary: vertical edges oriented from $(i,1)$ to $(i,0)$, $i=1,2,\dots,n$ and diagonal edges oriented from $(i,1)$ to $(i+1,0)$ for 
$i=1,2,\dots n-1$;}
\item{East boundary: horizontal edges oriented from $(n+1,j)$ to $(n,j)$, $j=1,2,\dots,n$ and diagonal edges oriented from $(n+1,j)$ to $(n,j+1)$ for 
$j=1,2,\dots n-1$;}
\item{North boundary: vertical edges oriented from $(i,n)$ to $(i,n+1)$, $i=1,2,\dots,n$ and diagonal edges oriented from $(i+1,n)$ to $(i,n+1)$ for 
$i=1,2,\dots n-1$;}
\end{itemize}  
The boundary edge set itself is finally completed by two diagonal corner edges, and we distinguish two types of DWBC, referred to as DWBC1 and DWBC2 
respectively, depending on the orientation of these corner edges:
\begin{itemize}
\item{DWBC1: the diagonal edge oriented from $(0,n+1)$ to $(1,n)$ and the diagonal edge oriented from $(n,1)$ to $(n+1,0)$;}

or  
\item{DWBC2: the diagonal edge oriented from $(1,n)$ to $(0,n+1)$ and the diagonal edge oriented from $(n+1,0)$ to $(n,1)$.}
\end{itemize}

\begin{figure}
\begin{center}
\includegraphics[width=15cm]{}
\end{center}
\caption{\small Top: The $20$ possible environments allowed by the ice rule for a node of the triangular lattice.  Bottom: the equivalent $20$ vertices 
in the osculating path language. A path edge is drawn whenever the underlying orientation runs from North, Northwest or West to East, Southeast or South.
Path steps are then concatenated into non-crossing paths. 
}
\label{fig:twentyV}
\end{figure}
The domain thus defined is clearly a portion of triangular lattice where each inner node $(i,j)$, $i,j=1,2,\dots,n$ is incident to six edges. A configuration 
of the 20V model on this domain consists in the assignment of an orientation to all the inner edges satisfying the (triangular) \emph{ice rule} condition that
each node is incident to three outgoing and three incoming edges. Figures \ref{fig:DWBC1} 
and  \ref{fig:DWBC2} show examples of configurations corresponding to the DWBC1 and DWBC2 ensembles respectively. The ice rule gives rise
to exactly 20 possible environments around a given node, as displayed in Fig.~\ref{fig:twentyV}, hence the name of the model. In the following,
unless otherwise stated, we will be interested in enumerating the 20V configurations without discrimination on the possible node
environments. In other words, we attach the same weight $1$ to each of the 20 vertices of the model.

As in the case of the 6V model, the edge orientations of a configuration of the 20V model may be coded bijectively by configurations of so-called
\emph{osculating paths} visiting the edges of the lattice and obtained as follows:
we first assign to each edge of the lattice a natural orientation, namely from West to East for the horizontal edges, from North to South for the vertical edges, and
from Northwest to Southeast for the diagonal edges. Each edge of the lattice is then covered by a path step if and only of its actual orientation matches the natural orientation.
Note that the path steps are de facto naturally oriented from North, Northwest or West (NW for short) to East, Southeast or South (SE) and the ice rule ensures that the number of
paths steps coming from NW at each node is equal to that leaving towards SE. This allows to concatenate the path steps into global paths. When four or six path steps
are incident to a given node, the prescription for concatenation is the unique choice ensuring that the paths \emph{do not cross each other}, even though they meet at the node at hand.
Such paths are called ``osculating". Note that no two paths can share a common edge. 
By construction, the path configuration consists of $2n$ (respectively $2n-1$) paths in the DWBC1 (respectively DWBC2) ensemble,
connecting the $2n$ edges of the West boundary plus the upper-left corner edge (respectively the  $2n-1$ edges of the West boundary) to the 
$2n$ edges of the South boundary plus the lower-right corner edge (respectively the  $2n-1$ edges of the South boundary) without crossing.
Figures \ref{fig:DWBC1} 
and  \ref{fig:DWBC2} show examples of such osculating path configurations.

The simplest question we may ask about 20V configuration with DWBC is that of the number $A_n=Z^{20V}(n)$ of configurations for a given $n$. 
First we note that this number is \emph{the same} for the prescriptions DWBC1 and DWBC2 due to a simple duality between the two models:
performing a rotation by $180^\circ$ in the plane sends a configuration of arrows obeying the DWBC1 prescription
to one obeying the DWBC2 and conversely (the symmetry being an involution). Indeed, the ice rule is invariant under this rotation and the boundary conditions
are unchanged, except for the orientation of the corner edges which are reversed. This gives a bijection between the configurations in the two ensembles
which thus have the same cardinality. In the osculating path language, the DWBC2 path configuration is obtained by taking the complement of the DWBC1 
path configuration (i.e. covering uncovered edges and conversely) and rotating it by $180^\circ$ . The configuration in Fig.~\ref{fig:DWBC2} is the
image of that of Fig.~\ref{fig:DWBC1} by this bijection. The distinction between the two ensembles will still be significant when we address more refined question 
in the next sections.

\begin{figure}
\begin{center}
\includegraphics[width=11cm]{}
\end{center}
\caption{\small The $23$ configurations of the 20V model with DWBC1 for $n=3$, represented in the osculating path language.
}
\label{fig:23configs}
\end{figure}
The sequence of the first values of the numbers  $A_n=Z^{20V}(n)$ for $n$ up to 8 are listed 
in \eqref{seqone}. The $23$ configurations for $n=3$ are represented for illustration in Fig.~\ref{fig:23configs} 
\subsection{General properties}
\begin{figure}
\begin{center}
\includegraphics[width=13cm]{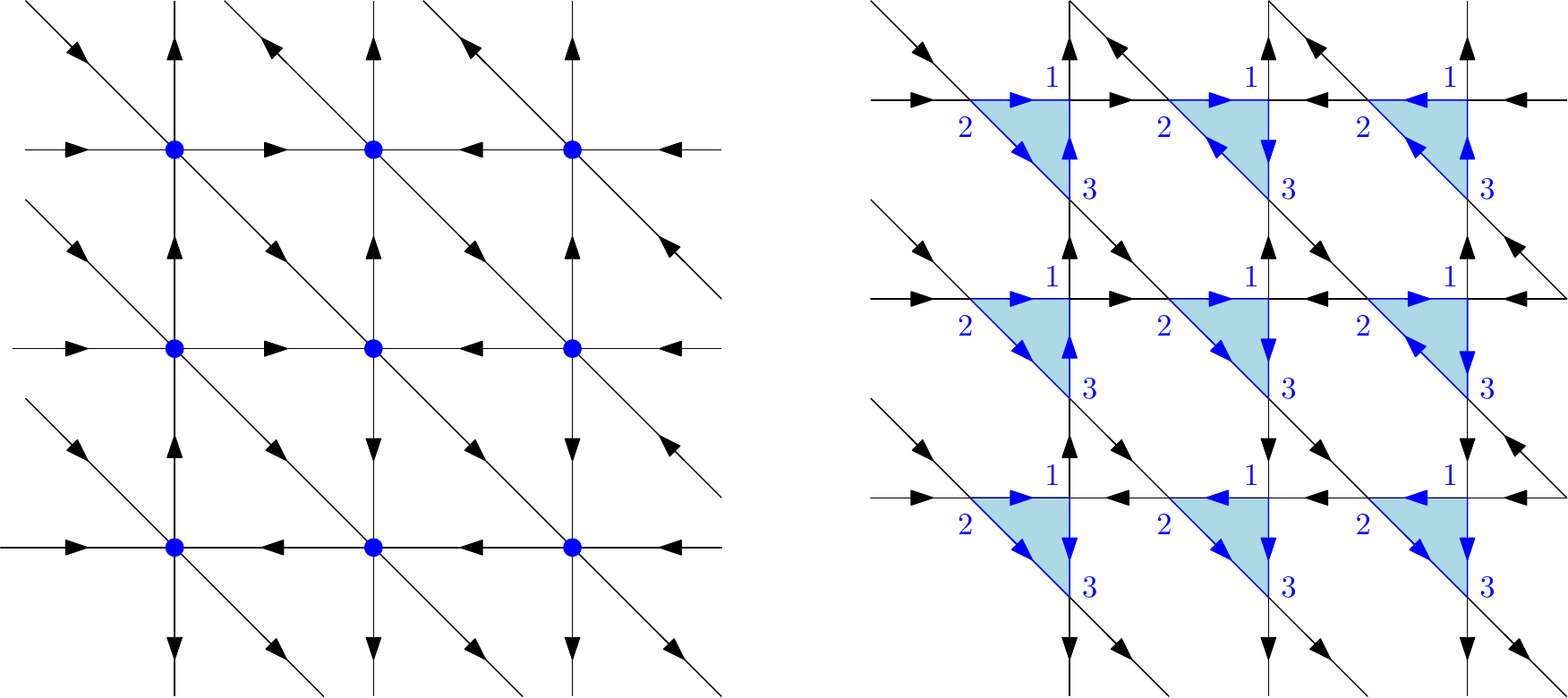}
\end{center}
\caption{\small The deformation of the triangular lattice into the Kagome lattice by sliding the horizontal lines to the North, splitting each node into a small triangle (colored
in light blue). The Kagome lattice is naturally split into three sublattices denoted $1$,
$2$ and       
$3$ as shown. Each configuration of the 20V model (left) may be completed into a configuration satisfying 
the ice rule at each vertex of the Kagome lattice by some (non-unique) appropriate choice of orientation for the newly created edges (blue arrows).
}
\label{fig:Kagome}
\end{figure}
Following Baxter \cite{Baxter}, we may transform the 20V model into an ice model on the Kagome lattice as follows:
starting from our portion of triangular lattice and splitting each node into a small triangle, say by slightly sliding each horizontal line to the North,
results in a portion of Kagome lattice, as shown in Fig.~\ref{fig:Kagome}. Clearly, any orientation of the edges of the Kagome lattice satisfying the ice rule
(i.e. with two incoming and two outgoing edges incident to each node) results into a configuration \emph{where the six 
edges of the original triangular lattice} satisfy the ice rule around any small triangle replacing an original node. Conversely, any 
orientation satisfying the ice rule on the original triangular lattice may be completed via some appropriate choice of orientation of the newly formed edges so as to create 
an ice model configuration on the Kagome lattice (note that the choice of orientation for the new edges is not unique in general).
This construction allows to rephrase our 20V 
model in terms of an ice model on the Kagome lattice. Let us now discuss how to recover the desired weight $1$ per vertex of the 20V model by some appropriate
weighting of the vertex configurations around each node of the Kagome lattice.
The Kagome lattice is naturally decomposed into three sublattices, denoted $1$, $2$ and $3$ with the following choice: 
\begin{itemize}
\item{lattice $1$: vertices at the crossing of a horizontal and a vertical line;}
\item{lattice $2$: vertices at the crossing of a horizontal and a diagonal line;}        
\item{lattice $3$: vertices at the crossing of a vertical and a diagonal line.}
\end{itemize}
\label{sec:20to6}
\begin{figure}
\begin{center}
\includegraphics[width=10cm]{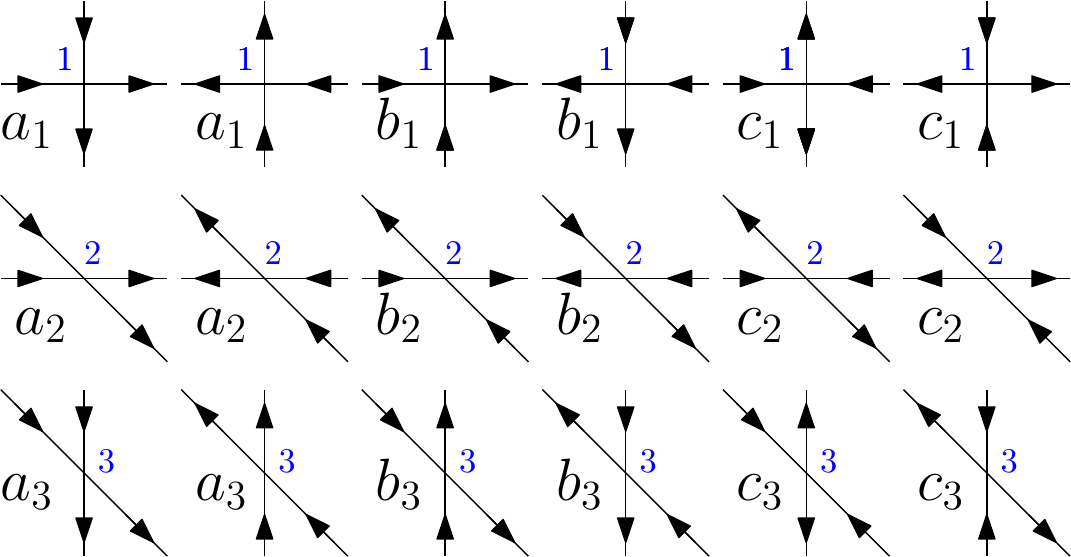}
\end{center}
\caption{\small The weight denomination for the three copies of 6V models on the Kagome lattice. The index of the weights corresponds to that of the underlying 
sublattice. Vertices related by a global reversing of all arrows are chosen to have the same weight.
}
\label{fig:KagomeWeights}
\end{figure}
Due to the ice rule, each vertex of sublattice $1$ (respectively $2$ and $3$) matches one of the six vertex configurations of the 6V model and we may naturally weight
these configurations with three weights $a_1,b_1,c_1$ (respectively $a_2,b_2,c_2$ and $a_3,b_3,c_3$) according to the rules of Fig.~\ref{fig:KagomeWeights}.
In order to recover the desired weight $1$ for each vertex of the 20V model, the weights of the Kagome model must satisfy the following $10$ relations\footnote{Note that
our definition of the weights on the Kagome lattice differ from that of Baxter in \cite{Baxter} upon the exchange $a_1\leftrightarrow b_1$. The 20 relations for the 20 possible 
vertices reduce to 10 by symmetry.}
\begin{equation}
\begin{split}
1&=a_1a_2a_3=b_1a_2b_3=b_1a_2c_3=c_1a_2a_3=b_1c_2a_3=b_1b_2a_3\\ &= a_1b_2c_3+c_1c_2b_3=a_1b_2b_3+c_1c_2c_3=c_1b_2b_3+a_1c_2c_3=c_1b_2c_3+a_1c_2b_3\\
\end{split}
\label{eq:weightsone}
\end{equation}
\begin{figure}
\begin{center}
\includegraphics[width=10cm]{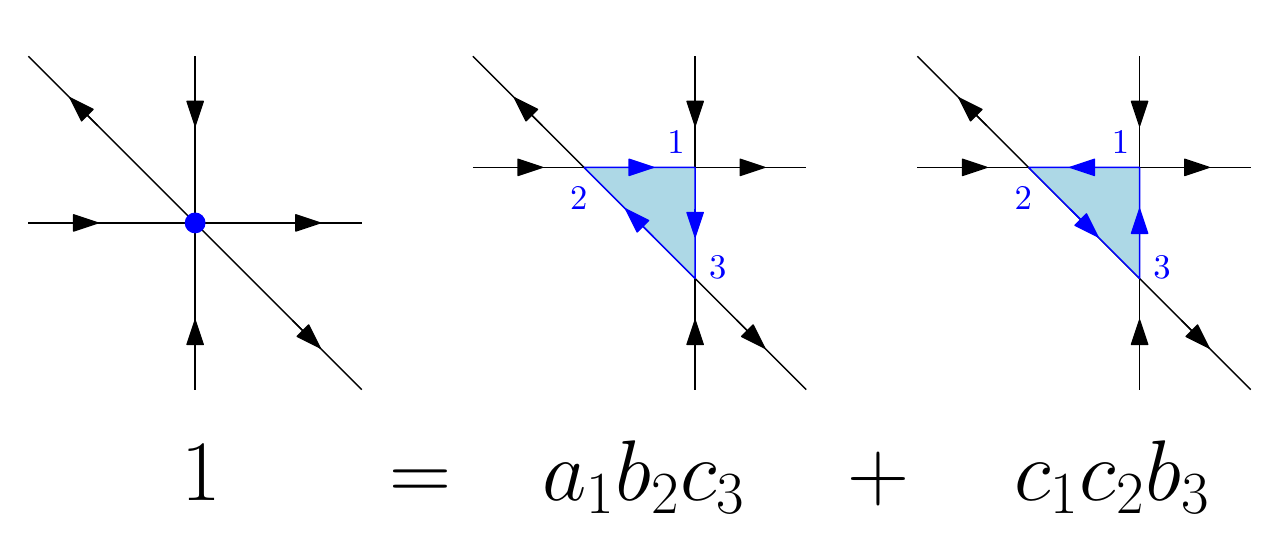}
\end{center}
\caption{\small The relation $a_1b_2c_3+c_1c_2b_3=1$ ensuring a weight $1$ for the vertex of the 20V model shown on the left, as obtained by
summing over the two possible orientations for the edges of the central small triangle in the equivalent Kagome lattice setting (right).
}
\label{fig:rel1}
\end{figure}
For instance, the relation $a_1b_2c_3+c_1c_2b_3=1$ comes from the summation over the two possible orientations in the small triangle shown in Fig.~\ref{fig:rel1}.
A possible choice of solution for the system of equations \eqref{eq:weightsone} is
\begin{equation}
(a_1,b_1,c_1)=\frac{\alpha}{2^{1/3}}(1,\sqrt{2},1)\ , \qquad (a_2,b_2,c_2)=\frac{\beta}{2^{1/3}}(\sqrt{2},1,1)\ , \qquad (a_3,b_3,c_3)=\frac{\gamma}{2^{1/3}}(\sqrt{2},1,1)
\label{eq:solweights} 
\end{equation} 
for any choice of $\alpha$, $\beta$ and $\gamma$ such that $\alpha\beta\gamma=1$.
\begin{figure}
\begin{center}
\includegraphics[width=12cm]{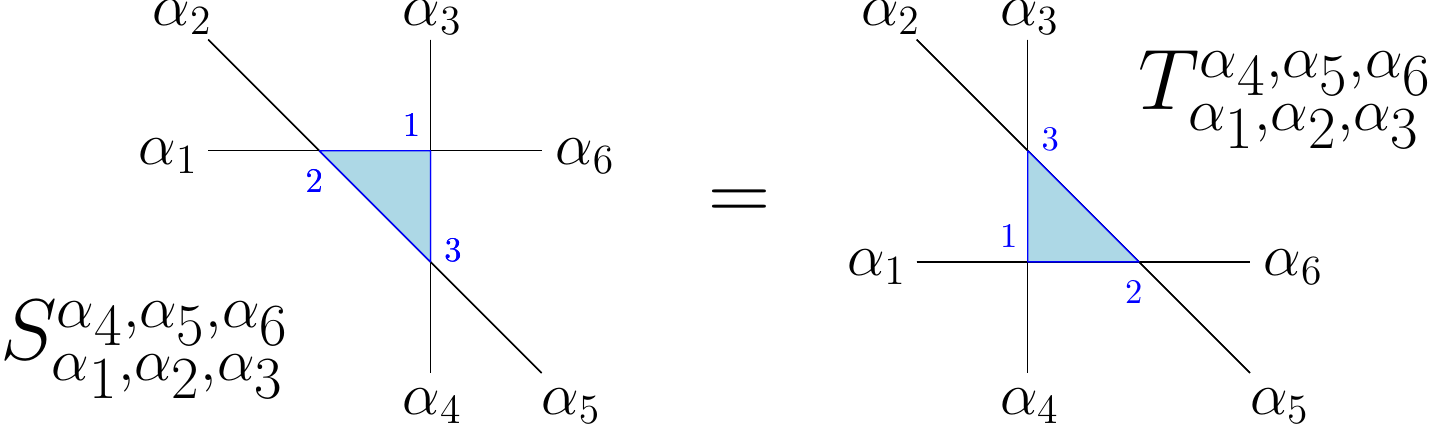}
\end{center}
\caption{\small A schematic picture of the Yang-Baxter equation $S_{\alpha_1,\alpha_2,\alpha_3}^{\alpha_4,\alpha_5,\alpha_6}=T_{\alpha_1,\alpha_2,\alpha_3}^{\alpha_4,\alpha_5,\alpha_6}$. For fixed 
orientations $\alpha_1,\cdots,\alpha_6$, $S_{\alpha_1,\alpha_2,\alpha_3}^{\alpha_4,\alpha_5,\alpha_6}$ and $T_{\alpha_1,\alpha_2,\alpha_3}^{\alpha_4,\alpha_5,\alpha_6}$ are obtained by summing over the orientations of the edges of the central triangle allowed by the ice rule on the Kagome lattice, with their associated weights
of Fig.~\ref{fig:KagomeWeights}.
}
\label{fig:YB}
\end{figure}
A very efficient tool in solving the 6V model is the use of the so called Yang-Baxter relations which allow to deform and eventually unravel the underlying lattice into a simpler graph.
In the above Kagome lattice setting, denoting by $\alpha_1,\cdots,\alpha_6$ the six orientations around a small triangle as shown in Fig.~\ref{fig:YB}, with $\alpha_i=1$ if
the orientation matches the natural (from NW to SE) orientation and $0$ otherwise, these relations ensure that the weight $S_{\alpha_1,\alpha_2,\alpha_3}^{\alpha_4,\alpha_5,\alpha_6}$
obtained by summing over the possible orientations of the edges of the small triangle is equal, for any choice of the $\alpha_i$'s, to that, $T_{\alpha_1,\alpha_2,\alpha_3}^{\alpha_4,\alpha_5,\alpha_6}$,
obtained by sliding the diagonal line (passing through vertices of sublattices $2$ and $3$) to the other side of the node of the sublattice $1$ (see Fig.~\ref{fig:YB}).
In terms of the weights $(a_i,b_i,c_i)$, it is easily checked that this equality holds if and only if we have the three relations:
\begin{equation}
(a_1b_2-b_1a_2)c_3+c_1c_2b_3=0\ , \quad (a_1 b_3-b_1a_3)c_2+c_1c_3b_2=0\ , \quad (b_2b_3-a_2a_3)c_1+c_2c_3 a_1=0\ .
\label{eq:YB}
\end{equation}
Note that these relations are in practice weaker than the relations \eqref{eq:weightsone} in the sense that imposing \eqref{eq:weightsone} automatically implies
\eqref{eq:YB}. For instance, the relation $(a_1b_2-b_1a_2)c_3+c_1c_2b_3=0$ is a direct consequence of the two identities $b_1a_2c_3=a_1b_2c_3+c_1c_2b_3=1$.
In particular, \eqref{eq:YB} is satisfied by the solution \eqref{eq:solweights}, as easily verified by a direct computation.

\subsection{Integrable weight parametrization}
\begin{figure}
\begin{center}
\includegraphics[width=8cm]{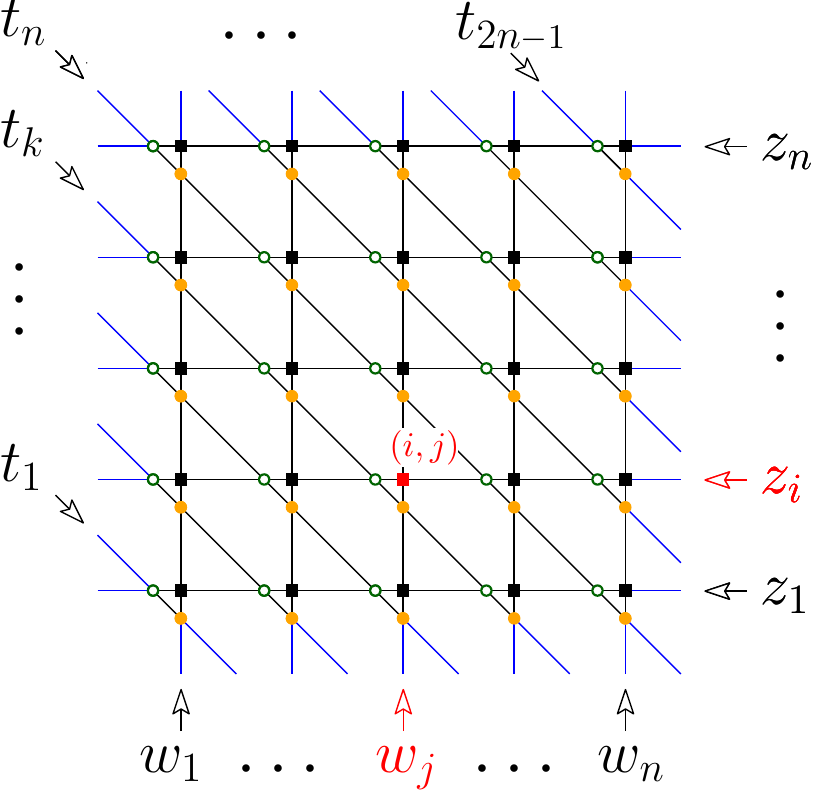}
\end{center}
\caption{\small Spectral parameters for the Kagome lattice version of the 20V model.
}
\label{fig:spectral}
\end{figure}
It is useful to introduce more general weights for our 20V model, or equivalently for its Kagome reformulation, by introducing so called spectral parameters
in the following way, mimicking the well known use of spectral parameters for the 6V model. 
Let us number the horizontal lines of our lattice by $i=1,2,\dots, n$ from bottom to top and attach a (complex) parameter $z_i$ to the $i$'th line. Similarly,
we label the vertical lines by $j=1,2,\dots, n$ from left to right and attach a parameter $w_j$ to the $j$'th line. Finally,
the diagonal lines are labeled by  $k=1,2\dots, 2n-1$ from bottom left to top right and we attach a parameter $t_k$ to the $k$-th line.
This labeling induces a similar labeling for the horizontal, vertical and diagonal lines of the Kagome lattice, see Fig.~\ref{fig:spectral}. Each node of the sublattice $1$ is then at the crossing of a horizontal
and a vertical line, hence naturally labeled by a pair $(i,j)$ with $i,j=1,2,\dots,n$. Similarly, each node of the sublattice $2$ is labeled by a pair $(i,k)$, with $i=1,2,\dots,n$ and $k=i,i+1,\dots,i+n-1$, 
while each node of the sublattice $3$ corresponds to a pair $(k,j)$ with $j=1,2,\dots,n$ and $k=j,j+1,\dots,j+n-1$. This allows us to introduce \emph{non-homogeneous} weights
 $(a_1(i,j),b_1(i,j),c_1(i,j))$ for the configurations around vertices of the sublattice $1$ according to the dictionary of Fig.~\ref{fig:KagomeWeights}, and similarly weights 
 $(a_2(i,k),b_2(i,k),c_2(i,k))$ and $(a_3(k,j),b_3(k,j),c_3(k,j))$.  

Introducing the notations
\begin{equation*}
A(u,v)=u-v\ , \qquad B(u,v)=q^{-2}\, u-q^2\, v\ , \qquad C(u,v)=(q^2-q^{-2})\, \sqrt{u\, v}\ ,
\end{equation*}
with $u$, $v$ and $q$ some complex numbers, we consider the following \emph{integrable weight parametrization}:
\begin{equation}
\begin{matrix}
a_1(i,j)=A(z_i,w_j)\ , \hfill &b_1(i,j)=B(z_i,w_j)\ , \hfill &c_1(i,j)=C(z_i,w_j)\ , \hfill \\
a_2(i,k)=A(q\, z_i,q^{-1}\, t_k)\ , \hfill &b_2(i,k)=B(q\, z_i,q^{-1}\, t_k)\ , \hfill &c_2(i,k)=C(q\, z_i,q^{-1}\, t_k)\ , \hfill \\
a_3(k,j)=A(q\, t_k,q^{-1}\, w_j)\ , \hfill &b_3(k,j)=B(q\, t_k,q^{-1}\, w_j)\ , \hfill &c_3(k,j)=C(q\, t_k,q^{-1}\, w_j)\ , \hfill \\
\end{matrix}
\label{eq:integrable}
\end{equation}
where the complex numbers $z_i$, $i-1,2,\dots,n$, $w_j$, $j=1,2,\dots,n$, and $t_k$, $k=1,2,\dots,2n-1$ are arbitrarily fixed spectral parameters. 
The main feature of this integrable parametrization is that, for any choice of the spectral parameters, the Yang Baxter relations \eqref{eq:YB} are automatically satisfied
\emph{for any triple} $(i,j,k)$ in \eqref{eq:integrable}, as easily checked by a direct computation.  

The solution \eqref{eq:solweights}  of \eqref{eq:weightsone} may be recovered in this framework by choosing 
\begin{equation}
t_k=t\ , \qquad z_i=q^6\, t\ , \qquad w_j=q^{-6}\, t
\label{eq:tzw}
\end{equation}
for all $k$, $i$ and $j$, 
leading to the homogeneous weights:
\begin{equation*}
\begin{matrix}
a_1=(q^6-q^{-6})t , \hfill &b_1=(q^4-q^{-4})t\ , \hfill &c_1=(q^2-q^{-2})t\ , \hfill \\
a_2=(q^7-q^{-1})t\ , \hfill &b_2=(q^5-q)t\ , \hfill &c_2=(q^5-q)t\ , \hfill \\
a_3=(q-q^{-7})t\ , \hfill &b_3=(q^{-1}-q^{-5})t\ , \hfill &c_3=(q^{-1}-q^{-5})t\ . \hfill \\
\end{matrix}
\end{equation*}
Upon taking the particular value
\begin{equation}
q=\rm{e}^{{\rm i}\pi/8}\ ,
\label{eq:valq}
\end{equation}
these weights reduce, using $q^8=-1$ and $(q^4-q^{-4})=\sqrt{2}\, (q^2-q^{-2})$, to 
\begin{equation}
\begin{matrix}
(a_1,b_1,c_1)=(q^2-q^{-2})t\ (1,\sqrt{2},1)\ ,\hfill \\
(a_2,b_2,c_2)=q^3\, (q^2-q^{-2})t\ (\sqrt{2},1,1) \ ,\hfill \\
(a_3,b_3,c_3)=q^{-3}\, (q^2-q^{-2})t\  (\sqrt{2},1,1) \ ,\hfill \\
\end{matrix}
\label{eq:homval}
\end{equation}
a form which matches precisely that of \eqref{eq:solweights} with $\alpha=1$, $\beta=q^3$ and $\gamma=q^{-3}$  whenever
$\left((q^2-q^{-2})t\right)^3=1/2$, namely, say
\begin{equation}
t=\frac{1}{2^{1/3}\, (q^2-q^{-2})}=-\frac{{\rm i}}{2^{5/6}}\ .
\label{eq:valt}
\end{equation}

\section{Mapping to a 6V model with Domain Wall Boundary Conditions}
\label{sec:20to6}

\subsection{Unraveling the 20V configurations}\label{sec:unravel}
\begin{figure}
\begin{center}
\includegraphics[width=14cm]{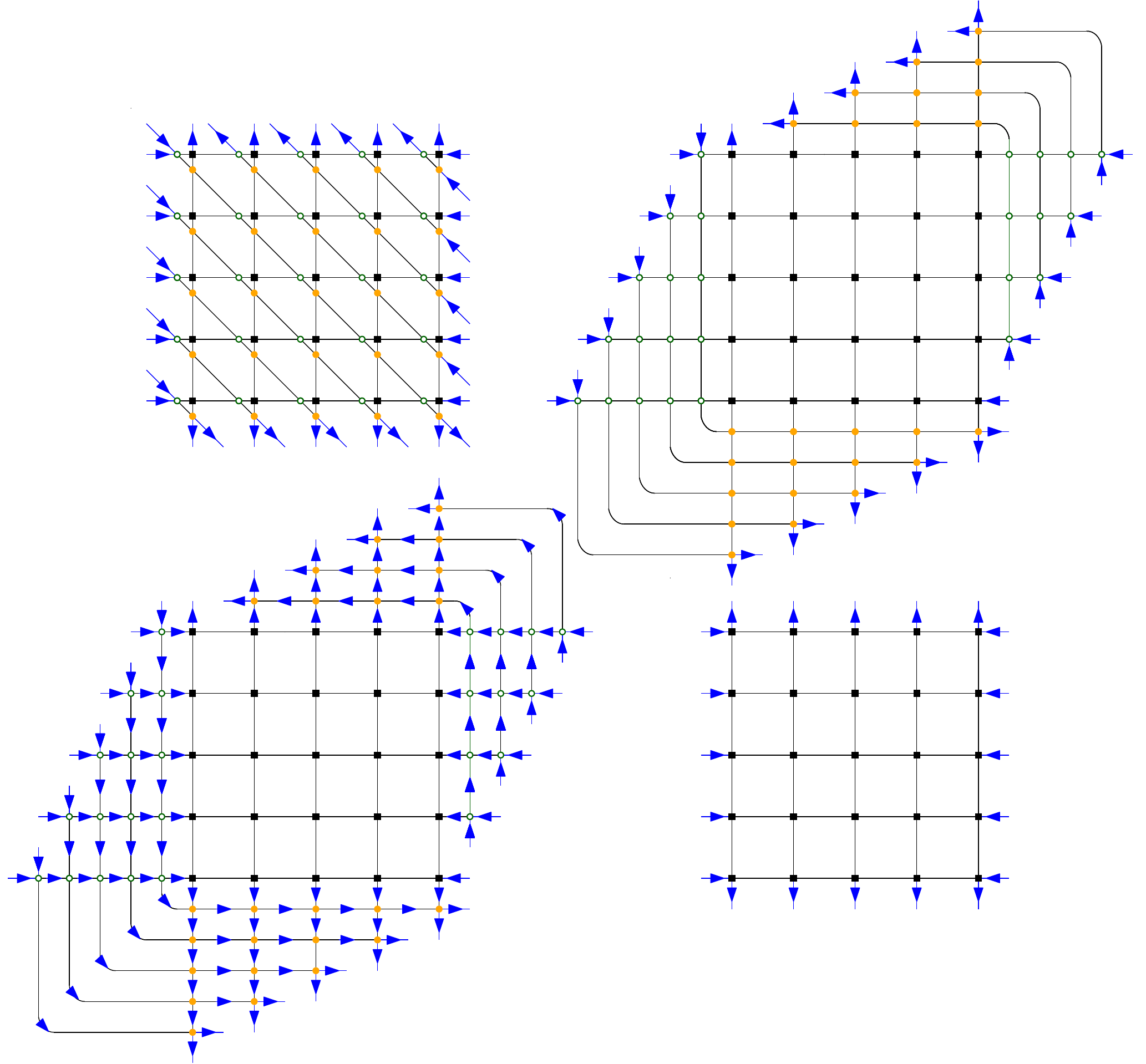}
\end{center}
\caption{\small The unraveling of a configuration of the 20V model with DWBC1 (top left). Using the Yang Baxter property allows to deform the diagonal lines 
and expel them out of the central square grid (top right). Note the that the main diagonal is expelled towards the lower-left of the square grid, a choice adapted to the DWBC1 prescription.
Due to the ice rule and the boundary conditions, the orientation of all the edges outside of the central square grid are entirely fixed (lower left), leaving as only degrees of freedom
the orientation of the edges inside the central square grid, reproducing a 6V model with DWBC (lower right). 
}
\label{fig:unraveling}
\end{figure}
We now return to our 20V model with, say the DWBC1 prescription and with a weight $1$ per vertex and consider its Kagome equivalent formulation
with the weights given by \eqref{eq:solweights}. As already mentioned, as solution of the equation \eqref{eq:weightsone}, these weights automatically satisfy the 
conditions \eqref{eq:YB} ensuring the Yang Baxter property. This allows to deform the lattice by expelling the diagonal lines from the $n\times n$ square grid
as shown in Fig.~\ref{fig:unraveling}. The diagonal lines with index $k\leq n$ are expelled towards the lower-left of the square grid and those with index $k>n$ towards the upper-right. 
The choice for the main diagonal ($k=n$) is adapted to the DWBC1 prescription. 
For the DWBC2 prescription, the proper choice would be to move this diagonal towards the upper-right instead.
In the deformed configuration, all the vertices of the sublattices $2$ and $3$ have been expelled outside of the $n\times n$ square grid which contains only
vertices of the sublattice $1$. More interestingly, due to the ice rule and to the prescribed boundary conditions, the orientations of all the edges outside the 
central  $n\times n$ square grid are entirely \emph{fixed} (see Fig.~\ref{fig:unraveling}), and all correspond to configurations of "type $a$", namely receive the weight 
$a_2$ if they belong to sublattice $2$ and $a_3$ is they belong to the sublattice $3$. This leads to a global contribution $(a_2\, a_3)^{n^2}$ while the remaining configuration
is that of a standard 6V model on the $n\times n$ square grid with the celebrated Domain Wall Boundary Conditions. 
We immediately deduce the following:
\begin{figure}
\begin{center}
\includegraphics[width=10cm]{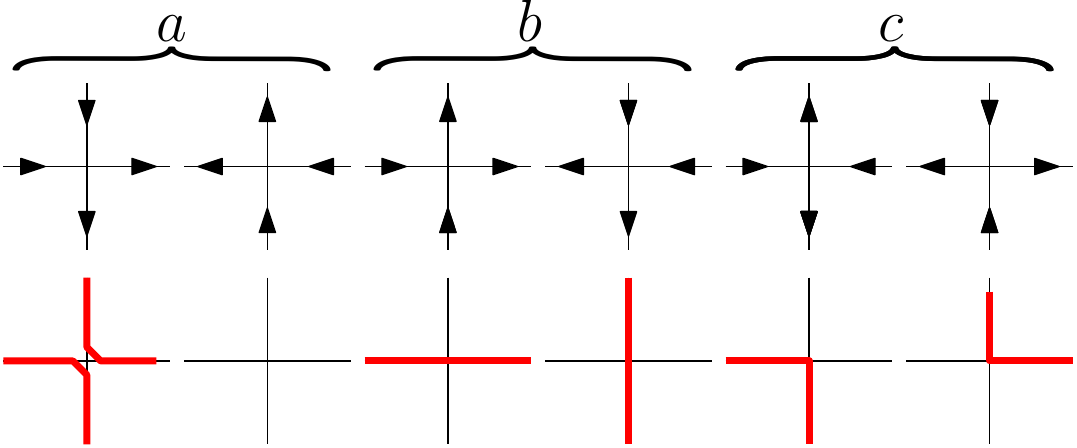}
\end{center}
\caption{\small Vertex configurations of the 6V model on the square grid and the associated weights $a$, $b$ and $c$ (top row) and their
equivalent osculating path representation (bottom row).  
}
\label{fig:6VWeights}
\end{figure}
\begin{thm}
\label{An6V}
The number $A_n$ of configurations of the 20V model with DWBC1,2 on an $n\times n$ grid reads
\begin{equation*}
A_n=Z^{20V}(n)=Z^{6V}_{\left[1,\sqrt{2},1\right]}(n)\ .
\end{equation*} 
where $Z^{6V}_{\left[a,b,c\right]}(n)$ denotes the partition function of the 6V model on an $n\times n$ square grid with DWBC and weights $(a,b,c)$ 
according to the dictionary of Fig.~\ref{fig:6VWeights}. 
\end{thm}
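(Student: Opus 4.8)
The plan is to turn the qualitative unraveling picture of Fig.~\ref{fig:unraveling} into a controlled computation of the partition function. I would start from the 20V-DWBC1 model with weight $1$ per vertex and pass to its Kagome reformulation, with the concrete weights \eqref{eq:solweights} depending on free parameters $\alpha,\beta,\gamma$ with $\alpha\beta\gamma=1$. Since these weights solve \eqref{eq:weightsone}, they automatically satisfy the Yang--Baxter relations \eqref{eq:YB}, which is precisely the input needed to deform the lattice. The whole argument then reduces to two things: (i) justifying that the diagonal lines can be expelled from the central $n\times n$ grid without changing the weighted sum, and (ii) identifying what survives after expulsion.

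For step (i) I would use the standard ``train'' argument: the Yang--Baxter move of Fig.~\ref{fig:YB} slides a single diagonal line past one sublattice-$1$ node while preserving, for every fixed choice of external orientations, the summed weight. Applying it repeatedly drags a whole diagonal line across the grid and out. A convenient simplification here is that the diagonal lines of the triangular lattice are mutually \emph{parallel}, so no diagonal--diagonal crossing ever arises during the deformation; one only ever slides a diagonal past sublattice-$1$ vertices. Following Fig.~\ref{fig:unraveling} I would expel the lines $k\le n$ towards the lower-left and the lines $k>n$ towards the upper-right, the assignment of the main diagonal $k=n$ to the lower-left being exactly the choice dictated by the DWBC1 corner edges.

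The delicate point, and the one I expect to be the main obstacle, is step (ii): showing that once all diagonal lines sit outside the grid, the prescribed boundary orientations together with the ice rule \emph{force} every edge outside the central $n\times n$ square, leaving the $n^2$ sublattice-$1$ vertices as the only remaining degrees of freedom. I would argue this by propagation, starting from the corners where the boundary edges are completely fixed by the DWBC1 prescription, and showing inductively that at each expelled sublattice-$2$ or sublattice-$3$ vertex the ice rule admits a unique completion, which is moreover always of ``type $a$'' in the dictionary of Fig.~\ref{fig:KagomeWeights}. This is the step that genuinely uses the precise domain-wall data (the orientations of the West/South/East/North edges and of the corner edges), and it requires a careful but finite case analysis to rule out any branching; it is exactly where DWBC1 and DWBC2 must be treated on a slightly different footing.

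Granting (i) and (ii), the weighted sum factorizes as $Z^{20V}(n)=(a_2a_3)^{n^2}\,Z^{6V}_{[a_1,b_1,c_1]}(n)$, where the first factor collects the $n^2$ type-$a$ vertices of sublattice $2$ and the $n^2$ of sublattice $3$, and the second is an honest 6V-DWBC partition function with the internal weights $(a_1,b_1,c_1)=\tfrac{\alpha}{2^{1/3}}(1,\sqrt2,1)$. To finish I would exploit the homogeneity of the 6V-DWBC partition function: since the grid has exactly $n^2$ vertices, rescaling all three weights by a common factor $\lambda$ multiplies $Z^{6V}$ by $\lambda^{n^2}$, so $Z^{6V}_{[a_1,b_1,c_1]}(n)=(\alpha/2^{1/3})^{n^2}\,Z^{6V}_{[1,\sqrt2,1]}(n)$. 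Using \eqref{eq:solweights} one computes $a_2a_3=\beta\gamma\,2^{1/3}=2^{1/3}/\alpha$, so that the two prefactors are exact reciprocals and cancel, yielding $A_n=Z^{20V}(n)=Z^{6V}_{[1,\sqrt2,1]}(n)$ independently of the auxiliary parameters. The DWBC2 case then follows immediately, either by running the same argument with the diagonal $k=n$ expelled towards the upper-right, or simply by invoking the $180^\circ$ rotation duality already established, which shows the two ensembles are equinumerous.
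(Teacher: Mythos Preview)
Your proposal is correct and follows essentially the same approach as the paper: pass to the Kagome formulation with the weights \eqref{eq:solweights}, use the Yang--Baxter relations to expel the diagonal lines as in Fig.~\ref{fig:unraveling}, observe that the exterior freezes into type-$a$ vertices contributing $(a_2a_3)^{n^2}$, and then cancel this prefactor against the homogeneity factor of $Z^{6V}_{[a_1,b_1,c_1]}(n)$ using $\alpha\beta\gamma=1$. The only cosmetic difference is that the paper absorbs the prefactor directly into the 6V weights before simplifying, whereas you first pull out $(\alpha/2^{1/3})^{n^2}$ by homogeneity and then cancel; the content is identical.
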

\begin{proof}
We indeed have 
\begin{equation*}
\begin{split}
A_n=Z^{20V}(n)&=(a_2\, a_3)^{n^2} Z^{6V}_{\left[a_1,b_1,c_1\right]}(n)=
\left(\beta\, \gamma\left(\frac{\sqrt{2}}{2^{1/3}}\right)^2\right)^{n^2} Z^{6V}_{\left[\alpha\frac{1}{2^{1/3}},\alpha\frac{\sqrt{2}}{2^{1/3}},\alpha\frac{1}{2^{1/3}}\right]}(n)\\
&= Z^{6V}_{\left[\alpha\, \beta\, \gamma\left(\frac{\sqrt{2}}{2^{1/3}}\right)^2\, \frac{1}{2^{1/3}},\alpha\, \beta\, \gamma\left(\frac{\sqrt{2}}{2^{1/3}}\right)^2\, \frac{\sqrt{2}}{2^{1/3}},\alpha\, \beta\, \gamma\left(\frac{\sqrt{2}}{2^{1/3}}\right)^2\, \frac{1}{2^{1/3}}\right]}(n)\\
\end{split}
\end{equation*} 
where we used the multiplicative nature of the weights
to redistribute the prefactor within the weights of the $n^2$ nodes of the sublattice $1$. Since $\alpha\, \beta\, \gamma=1$, the theorem follows.
\end{proof} 

\begin{figure}
\begin{center}
\includegraphics[width=11cm]{}
\end{center}
\caption{\small Illustration of the property $Z^{6V}_{\left[1,\sqrt{2},1\right]}(3)=1+2+2+2+4+4+8=23$ obtained by listing all the 6V model configurations with DWBC (here in the osculating path language)
and their associated weight (as indicated for each configuration), corresponding to attaching a factor $\sqrt{2}$ to each node traversed vertically or horizontally by a path (as shown by cross marks).
}
\label{fig:23via6V}
\end{figure}
Using Theorem~\ref{An6V} and straightforward generalizations, we will recourse to known results on the 6V model with DWBC to 
address a number of enumeration results for the 20V model. In the following, we will mainly use the osculating path formulation of the 20V model and the corresponding one
for the 6V model according to the correspondence of Fig.~\ref{fig:6VWeights}. Fig.~\ref{fig:23via6V} shows how to recover the value $A_3=Z^{20V}(3)=23$ from that of $Z^{6V}_{\left[1,\sqrt{2},1\right]}(3)$
in the osculating path framework.

\subsection{Refined enumeration}
\label{sec:refinedZ20}
A refined enumeration of the 20V model configurations consists, in the osculating path language, in keeping track of the position $i=\ell$ ($\ell=1,2,\dots,n$) where the uppermost 
path\footnote{The uppermost path corresponds to the $2n$-th path from the bottom for DWBC1 and to the $(2n-1)$-th path for DWBC2.}
first hits the vertical line $j=n$. Alternatively, $\ell-1$ is the number of occupied inner vertical edges in the last column.
We denote by $Z^{20V_{BC1}}_\ell= Z^{20V_{BC1}}_\ell(n)$ the number of configurations with a given $\ell$ for the DWBC1 prescription 
and $Z^{20V_{BC2}}_\ell$ this number for the  DWBC2 prescription. These numbers are encoded in the generating functions 
\begin{equation*}
\begin{split}
&\hat{Z}^{20V_{BC1}}(\tau)=\sum_{\ell=1}^n Z^{20V_{BC1}}_\ell \tau^{\ell-1}\ , \\
&\hat{Z}^{20V_{BC2}}(\tau)=\sum_{\ell=1}^n Z^{20V_{BC2}}_\ell  \tau^{\ell-1}\ , \\
\end{split}
\end{equation*}
(with an implicit $n$-dependence) which clearly satisfy $\hat{Z}^{20V_{BC1}}(1)=\hat{Z}^{20V_{BC2}}(1)=Z^{20V}(n)$. 
Similarly we denote by $Z^{6V}_{\left[1,\sqrt{2},1\right];\ell}= Z^{6V}_{\left[1,\sqrt{2},1\right];\ell}(n)$ the number of configurations of the 6V model with DWBC 
for which the uppermost osculating path first hits the vertical line $j=n$ at position $i=\ell$ and set
\begin{equation}
\hat{Z}^{6V}_{\left[1,\sqrt{2},1\right]}(\sigma)=\sum_{\ell=1}^n Z^{6V}_{\left[1,\sqrt{2},1\right];\ell} \sigma^{\ell-1} 
\label{ref6V}
\end{equation}
with $\hat{Z}^{6V}_{\left[1,\sqrt{2},1\right]}(1)=Z^{6V}_{\left[1,\sqrt{2},1\right]}(n)$.

Let us now show the following:
\begin{thm}
\label{20to6}
The generating polynomials $\hat{Z}^{20V_{BC1,2}}(\tau)$ for the refined 20V model are determined by the relations 
\begin{equation}
\hat{Z}^{20V_{BC2}}(\tau)=\hat{Z}^{6V}_{\left[1,\sqrt{2},1\right]}\left(\frac{1+\tau}{2}\right)=\hat{Z}^{20V_{BC1}}(0)+\frac{1+\tau}{2\tau}\left(\hat{Z}^{20V_{BC1}}(\tau)-\hat{Z}^{20V_{BC1}}(0)\right)\ .
\label{eq:20to6}
\end{equation}
Equivalently, coefficient-wise:
\begin{equation*}
\begin{split}
&Z^{20V_{BC1}}_1=Z^{6V}_{\left[1,\sqrt{2},1\right];1}\quad \hbox{and}\quad   Z^{20V_{BC1}}_\ell=\sum_{m=\ell}^n {m-2\choose \ell-2} \frac{1}{2^{m-2}} Z^{6V}_{\left[1,\sqrt{2},1\right];m}
\quad \hbox{for}\ \ell\geq 2\ ,\\
& Z^{20V_{BC2}}_\ell=\sum_{m=\ell}^n {m-1\choose \ell-1} \frac{1}{2^{m-1}} Z^{6V}_{\left[1,\sqrt{2},1\right];m}\ .\\
\end{split}
\end{equation*}
\end{thm}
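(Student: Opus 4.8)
The plan is to reduce the statement to two ``first‑equality'' identities and a short piece of algebra. First, the displayed formula for $Z^{20V_{BC2}}_\ell$ is nothing but the binomial expansion of the first equality in \eqref{eq:20to6}: writing $\hat Z^{6V}_{\left[1,\sqrt2,1\right]}\bigl(\tfrac{1+\tau}{2}\bigr)=\sum_{m}Z^{6V}_{\left[1,\sqrt2,1\right];m}\bigl(\tfrac{1+\tau}{2}\bigr)^{m-1}$ and expanding $\bigl(\tfrac{1+\tau}{2}\bigr)^{m-1}=2^{-(m-1)}\sum_{\ell}\binom{m-1}{\ell-1}\tau^{\ell-1}$ reproduces it term by term. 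Second, the \emph{second} equality in \eqref{eq:20to6} is a purely formal consequence of the first one together with the $Z^{20V_{BC1}}_\ell$ formula: the latter is equivalent to $\hat Z^{20V_{BC1}}(\tau)=\hat Z^{20V_{BC1}}(0)+\tau\sum_{m\ge2}Z^{6V}_{\left[1,\sqrt2,1\right];m}\bigl(\tfrac{1+\tau}{2}\bigr)^{m-2}$ with $\hat Z^{20V_{BC1}}(0)=Z^{20V_{BC1}}_1=Z^{6V}_{\left[1,\sqrt2,1\right];1}$, and eliminating $\hat Z^{6V}_{\left[1,\sqrt2,1\right]}$ between this and the first equality gives exactly the asserted M\"obius relation $\hat Z^{20V_{BC2}}(\tau)=\hat Z^{20V_{BC1}}(0)+\tfrac{1+\tau}{2\tau}\bigl(\hat Z^{20V_{BC1}}(\tau)-\hat Z^{20V_{BC1}}(0)\bigr)$. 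Thus it suffices to prove the first equality $\hat Z^{20V_{BC2}}(\tau)=\hat Z^{6V}_{\left[1,\sqrt2,1\right]}\bigl(\tfrac{1+\tau}{2}\bigr)$ together with the companion $Z^{20V_{BC1}}_\ell$ formula.

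To prove the first equality I would refine the unraveling of Theorem~\ref{An6V} by following the last column $j=n$. In a 6V configuration contributing to $Z^{6V}_{\left[1,\sqrt2,1\right];m}$ the uppermost osculating path meets the line $j=n$ at height $i=m$ and then runs straight down to the bottom of that column, traversing vertically exactly $m-1$ of its vertices; by the dictionary of Fig.~\ref{fig:6VWeights} (see also Fig.~\ref{fig:23via6V}) each such vertical traversal is a weight‑$b=\sqrt2$ vertex and accounts for one factor of $\sqrt2$. The mechanism I expect is that, running the unraveling backwards, each of these $m-1$ straight traversals reinstates one of the diagonal edges that had been expelled, and the ice rule leaves a two‑fold freedom for its orientation; in the osculating‑path language of the 20V model the two choices decide whether or not the uppermost 20V path actually occupies the vertical edge at that vertex. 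Hence, conditioned on the 6V height being $m$, the 20V height $\ell$ is governed by how many of the $m-1$ traversals retain their vertical edge, so that each $\ell$ occurs with relative weight $\binom{m-1}{\ell-1}2^{-(m-1)}$; summing the corresponding kernel $\bigl(\tfrac{1+\tau}{2}\bigr)^{m-1}$ against $Z^{6V}_{\left[1,\sqrt2,1\right];m}$ over $m$ yields the first equality. A more robust route, better suited to the weight bookkeeping, is to keep generic a single spectral parameter of the boundary line governing the last column in the integrable parametrization \eqref{eq:integrable}: since \eqref{eq:YB} holds for all spectral parameters, the inhomogeneous unraveling still identifies the one‑parameter 20V and 6V partition functions, and $\tau$ and $\sigma$ emerge as two fractional‑linear functions of that parameter whose composition is precisely $\sigma=(1+\tau)/2$.

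For the companion DWBC1 formula the analysis is identical, the only difference being the fate of the main diagonal $k=n$: in the DWBC1 unraveling it is expelled towards the lower‑left rather than the upper‑right, as recalled just after Theorem~\ref{An6V}. Concretely this means that the extra corner path present in DWBC1 occupies the topmost vertex of the last column, removing it from the pool of free binary choices; the kernel is therefore $\tau(1+\tau)^{m-2}$ rather than $(1+\tau)^{m-1}$, with the boundary case $Z^{20V_{BC1}}_1=Z^{6V}_{\left[1,\sqrt2,1\right];1}$, which is exactly the displayed $Z^{20V_{BC1}}_\ell$ formula. The step I expect to be the genuine obstacle is the middle one: proving rigorously that the $m-1$ (respectively $m-2$) vertical traversals of the uppermost path are \emph{precisely} the vertices at which reinstating a diagonal produces an independent two‑fold choice, that these choices do not disturb the remainder of the configuration or the other paths in the last column, and that the weight $\sqrt2$ per traversal is correctly traded for the normalization $2^{-(m-1)}$. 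For this reason I would ultimately carry out the proof through the integrable one‑parameter partition function, where the identity $\sigma=(1+\tau)/2$ becomes a direct computation with a fractional‑linear change of variables rather than a delicate vertex‑by‑vertex bijection.
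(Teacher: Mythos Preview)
Your proposal is correct and follows essentially the same route as the paper: deform the spectral parameter $w_n$ of the last column away from its homogeneous value, compute the resulting modified vertex weights in that column on both the 20V side and (after Yang--Baxter unraveling) the 6V side, and read off the relation $\sigma=(1+\tau)/2$ as a fractional-linear change of the deformation variable. One small correction to your DWBC1 heuristic: the vertex that behaves differently is the \emph{bottom} one $(i=1,j=n)$, where the extra corner path exits diagonally, not the topmost; but since you ultimately defer to the integrable computation this does not affect the argument.
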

Note that the second relation in \eqref{eq:20to6} may alternatively be rewritten as 
\begin{cor}
\label{cor20to6}
\begin{equation}
\hat{Z}^{20V_{BC1}}(\tau)=\frac{2\tau}{1+\tau}\ \hat{Z}^{6V}_{\left[1,\sqrt{2},1\right]}\left(\frac{1+\tau}{2}\right)+\frac{1-\tau}{1+\tau}\ \hat{Z}^{6V}_{\left[1,\sqrt{2},1\right]}(0)
\ .
\label{eq:6to20BC1}
\end{equation}
\end{cor}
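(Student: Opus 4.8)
The plan is to recognize this corollary as nothing more than an algebraic inversion of the second equality in \eqref{eq:20to6}, so that no new combinatorial or analytic input is required beyond Theorem~\ref{20to6} itself. To keep the manipulation transparent I will abbreviate $F(\tau):=\hat{Z}^{20V_{BC1}}(\tau)$ and $S(\sigma):=\hat{Z}^{6V}_{\left[1,\sqrt{2},1\right]}(\sigma)$. The piece of Theorem~\ref{20to6} that I start from is the second equality of \eqref{eq:20to6}, namely
\begin{equation*}
S\!\left(\tfrac{1+\tau}{2}\right)=F(0)+\frac{1+\tau}{2\tau}\bigl(F(\tau)-F(0)\bigr).
\end{equation*}
First I would solve this for $F(\tau)$: subtracting $F(0)$ from both sides and multiplying through by $\tfrac{2\tau}{1+\tau}$ gives $F(\tau)-F(0)=\tfrac{2\tau}{1+\tau}\bigl(S(\tfrac{1+\tau}{2})-F(0)\bigr)$, and collecting the two $F(0)$ contributions yields
\begin{equation*}
F(\tau)=\frac{2\tau}{1+\tau}\,S\!\left(\tfrac{1+\tau}{2}\right)+\frac{1-\tau}{1+\tau}\,F(0).
\end{equation*}

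The only remaining point is to replace the constant $F(0)$ by $S(0)$, which is exactly the substitution that turns the line above into \eqref{eq:6to20BC1}. This identification is immediate from the definitions of the two generating polynomials together with the first coefficient-wise relation of Theorem~\ref{20to6}: evaluating at $\tau=0$ picks out the $\ell=1$ term, so $F(0)=\hat{Z}^{20V_{BC1}}(0)=Z^{20V_{BC1}}_1$ and $S(0)=\hat{Z}^{6V}_{\left[1,\sqrt{2},1\right]}(0)=Z^{6V}_{\left[1,\sqrt{2},1\right];1}$, and the stated equality $Z^{20V_{BC1}}_1=Z^{6V}_{\left[1,\sqrt{2},1\right];1}$ gives $F(0)=S(0)$. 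Substituting this in produces precisely the claimed formula \eqref{eq:6to20BC1}.

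There is essentially no obstacle in this argument; the only thing worth a moment's care is the division by $2\tau$. This is harmless because everything is an identity of polynomials in the formal variable $\tau$ (of degree at most $n-1$ for fixed $n$), and $1+\tau$ is not a zero divisor. If one prefers to avoid introducing denominators at all, the cleanest route is to clear them at the outset and instead verify the polynomial identity $2\tau\,S\!\left(\tfrac{1+\tau}{2}\right)=(1+\tau)F(\tau)-(1-\tau)S(0)$, which is equivalent to the second equality of \eqref{eq:20to6} after using $F(0)=S(0)$, and then divide by $1+\tau$ to read off \eqref{eq:6to20BC1}. Either way the corollary is a direct rearrangement of what has already been proved.
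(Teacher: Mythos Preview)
Your proof is correct and follows exactly the same route as the paper, which simply presents the corollary as an algebraic rewriting of the second equality in \eqref{eq:20to6}. One small remark: you can also read off $F(0)=S(0)$ directly by evaluating that second equality at $\tau=-1$ (where the coefficient $\tfrac{1+\tau}{2\tau}$ vanishes), without invoking the coefficient-wise identity.
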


\begin{figure}
\begin{center}
\includegraphics[width=15cm]{}
\end{center}
\caption{\small Left: modification of the weights of the 20V model with DWBC2 when performing the change $w_n\to w_n\, u$ for the spectral parameter attached
to the last column from the special values \eqref{eq:tzw},  \eqref{eq:valq} and \eqref{eq:valt}. We display only the four vertices which may appear in the last column due to the boundary condition at the East
boundary.
The weights are easily computed in the Kagome formulation,  with the result $W_1,\dots, W_4$ shown, satisfying $W_i\to 1$ when $u\to 1$, as required. Right:
the configuration in the the last column is, from bottom to top, made of a sequence of vertices weighted by $W_2$, then of a single vertex with weight $W_3$ or $W_4$ (hitting point) and
finally of a complementary sequence of vertices weighted by $W_1$.
}
\label{fig:newweights}
\end{figure}
To prove Theorem \ref{20to6}, let us start with the simplest case of DWBC2. The generating function $\hat{Z}^{20V_{BC2}}(\tau)$ may easily be obtained, in the equivalent Kagome formulation
of the 20V model, by slightly modifying the spectral parameter $w_n$ for the last column. Choosing the integrable parametrization \eqref{eq:integrable} 
for the Kagome vertex weights with $q$ as in \eqref{eq:valq}, $t_k=t$ as in \eqref{eq:valt} for all $k$, $z_i=q^6\, t$ for all $i$ and $w_j=q^{-6}\, t$ for all $j<n$
while $w_n=q^{-6}\, t\ u$ for some parameter $u$, only the weights $(a_1(i,n),b_1(i,n),c_1(i,n))$ and $(a_3(k,n),b_3(k,n),c_3(k,n))$ are modified with respect to the 
homogeneous values of \eqref{eq:homval}. The new values are
\begin{equation*}
\begin{matrix}
a_1(i,n)=(q^2\, u-q^{-2})t\ , \hfill &b_1(i,n)=(q^4-q^{-4}\, u)t\ , \hfill &\quad c_1(i,n)=(q^2-q^{-2})\, \sqrt{u}\, t\ ,\hfill \\
a_3(i,n)=q^{-3}(q^4-q^{-4}\, u)t\ , \hfill &b_3(i,n)=q^{-3}(q^2-q^{-2}\, u)t\ ,\hfill &\quad c_3(i,n)=q^{-3}(q^2-q^{-2})\, \sqrt{u}\, t\ .\hfill \\
\end{matrix}
\end{equation*}
This in turns leaves all the vertex weights of the 20V model equal to $1$, except for those of the last column ($j=n$).
Due to the boundary condition on the right of this column, only four vertex configurations are possible, as displayed in Fig.~\ref{fig:newweights},
corresponding to a vertex not visited by the uppermost path (weight $W_1$), a vertex crossed vertically by the uppermost path (weight $W_2$), or 
a vertex where the uppermost path hits the last column for the first time after a diagonal step (weight $W_3$) or a horizontal step (weight $W_4$). 
The respective new weights $W_1,\dots W_4$ are easily computed from the new Kagome weights above (see Fig.~\ref{fig:newweights}),
with the result:
\begin{equation*}
W_1=\frac{(1+u)(u+{\rm i})(1-{\rm i})}{4}\ , \quad W_2=\frac{(1+u)(u-{\rm i})(1+{\rm i})}{4} \, \quad W_3=W_4=\frac{\sqrt{u}(1+u)}{2}\ .
\end{equation*}
Clearly, a configuration for which the uppermost path hits the last column at position $\ell$ corresponds to 
a last column formed (from bottom to top) of $\ell-1$ vertices with weight $W_2$ (below the hitting point), one vertex with weight $W_3$ or $W_4$ (the hitting point)
and
$(n-\ell)$ vertices with weight $W_1$ (above the hitting point).
Note the crucial property $W_3=W_4$ which ensures that configurations, when hitting the last column, are weighted independently on the way 
(horizontal or diagonal) they reach this column and receive the weight 
$W_2^{\ell-1}W_3 W_1^{n-\ell}$. 
To summarize, setting $w_n=q^{-6}\, t\ u$ instead of $q^{-6}\, t$ changes the partition function $Z^{20V}(n)$ into the quantity
\begin{equation}
\sum_{\ell=1}^n Z^{20V_{BC2}}_\ell  \left(\frac{(1+u)(u-{\rm i})(1+{\rm i})}{4}\right)^{\ell-1}\ \frac{\sqrt{u}(1+u)}{2}\ \left(\frac{(1+u)(u+{\rm i})(1-{\rm i})}{4}\right)^{n-\ell}\ .
\label{eq:pf20}
\end{equation}
This quantity may be computed alternatively in the 6V model language by unraveling our configuration of the 20V model, or more
precisely its Kagome lattice equivalent, as we did in previous section. Indeed, the Yang Baxter relations still hold with the modified value of $w_n$. Note that
since we are now considering the DWBC2 prescription, the diagonal line $k=n$ must be moved towards the upper-right of the central square grid.  
\begin{figure}
\begin{center}
\includegraphics[width=9cm]{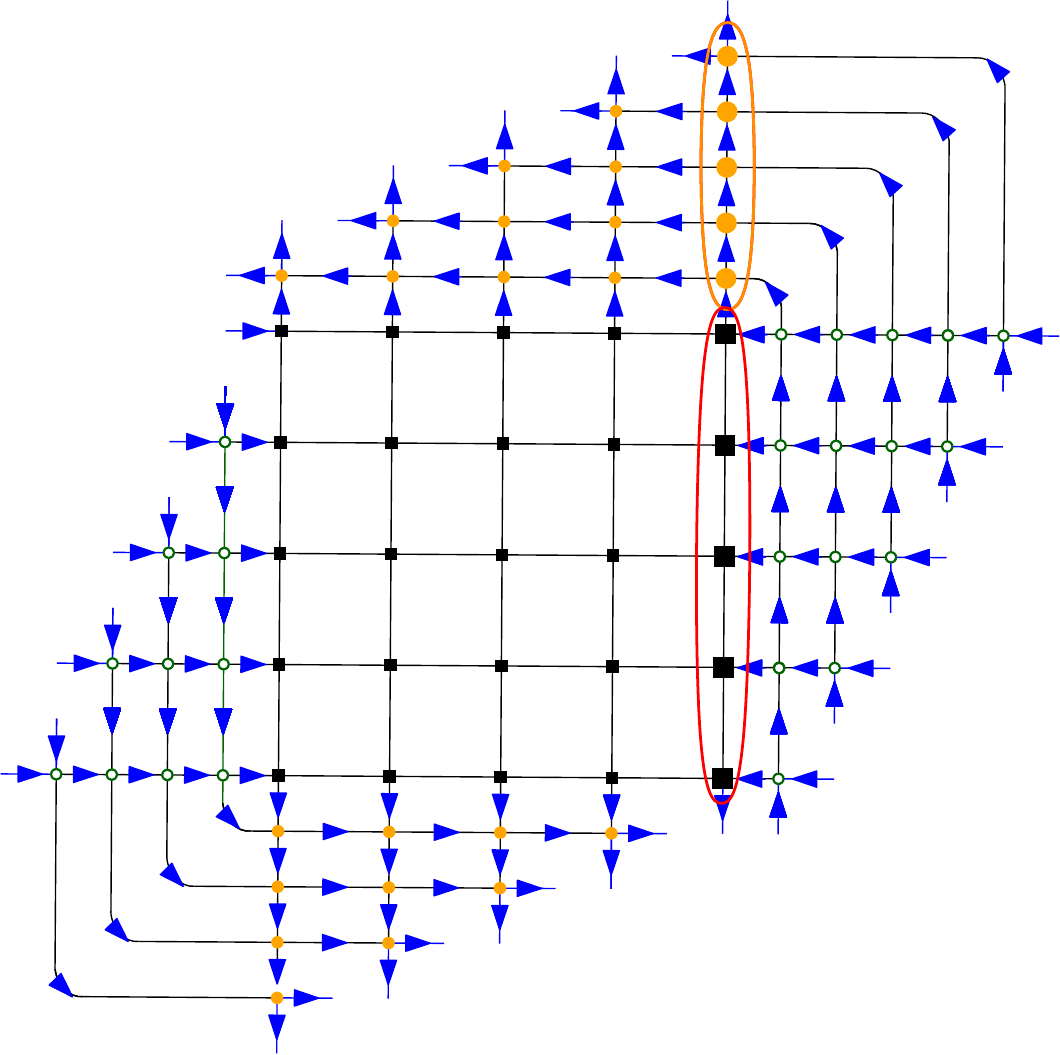}
\end{center}
\caption{\small In the unraveled configuration (here for DWBC2, hence with the main diagonal expelled towards the upper-right corner), changing $w_n\to w_n\,  u$ 
modifies only those weights corresponding to the encircled sets of nodes. The top set results into a global factor
while the bottom set corresponds to a change of the 6V weights in the last column (see text).
}
\label{fig:newunraveling}
\end{figure}
As shown in Fig.~\ref{fig:newunraveling}, changing $w_n$ from $q^{-6}\, t$ to $q^{-6}\, t\ u$ generates, compared with the fully homogeneous case, the following modifications: 
\begin{itemize}
\item{a global factor $\left(\frac{A\left(q\, t,q^{-7}\, t\, u\right)}{A\left(q\, t,q^{-7}\, t\right)}\right)^n=\left(\frac{q^4-q^{-4}\, u}{q^4-q^{-4}}\right)^n=\left(\frac{1+u}{2}\right)^n$ for 
the vertices of the sublattice $3$ crossing the vertical line $j=n$ (top encircled set of nodes in Fig.~\ref{fig:newunraveling});}
\item{a change of the weights $(1,\sqrt{2},1)$ for the equivalent DWBC 6V model \emph{in the last column of the central square grid} into weights 
$\left(\frac{A\left(q^6\, t,q^{-6}\, t\, u\right)}{A\left(q^6\, t,q^{-6}\, t\right)},\frac{B\left(q^6\, t,q^{-6}\, t\, u\right)}{B\left(q^6\, t,q^{-6}\, t\right)},
\frac{C\left(q^6\, t,q^{-6}\, t\, u\right)}{C\left(q^6\, t,q^{-6}\, t\right)}\right)=\left(1\times \frac{(u+{\rm i})(1-{\rm i})}{2},\sqrt{2}\times \frac{1+u}{2},1\times \sqrt{u}\right)$
(bottom encircled set of nodes in Fig.~\ref{fig:newunraveling})}.
\end{itemize}
Gathering the weights in the last column for a configuration where the uppermost path hits the last column at position $i=\ell$ (with the same argument as for
the 20V model), we obtain for the modified partition function \eqref{eq:pf20} the alternative expression
 \begin{equation}
\left(\frac{1+u}{2}\right)^n \sum_{\ell=1}^n Z^{6V}_{\left[1,\sqrt{2},1\right];\ell}  \left(\frac{1+u}{2}\right)^{\ell-1}\ \sqrt{u}\ \left(\frac{(u+{\rm i})(1-{\rm i})}{2}\right)^{n-\ell}\ .
\label{eq:pf6}
\end{equation}
Equating \eqref{eq:pf20} and \eqref{eq:pf6} leads directly to the announced relation \eqref{eq:20to6} identifying $\hat{Z}^{20V_{BC2}}(\tau)$ to $\hat{Z}^{6V}_{\left[1,\sqrt{2},1\right]}\left(\frac{1+\tau}{2}\right)$
upon setting $\tau={\rm i}\ \frac{u-{\rm i}}{u+{\rm i}}$.

\begin{figure}
\begin{center}
\includegraphics[width=13cm]{}
\end{center}
\caption{\small Left: modification of the weights of the 20V model with DWBC1 when performing the change $w_n\to w_n\, u$ for the spectral parameter attached to the last
column from the special values \eqref{eq:tzw},  \eqref{eq:valq} and \eqref{eq:valt}. We display only the three new vertices specific to DWBC1 (and thus not encountered in Fig.~\ref{fig:newweights}), 
corresponding 
to the three possible environments of the lower-right node. The associated values $W_5, W_6, W_7$ satisfy $W_i\to 1$ when $u\to 1$, as required. Right:
a sample configuration of the last column.
}
\label{fig:newweightsbis}
\end{figure}
We may now easily repeat the argument in the case of the DWBC1 prescription. The modified weights are the same as those listed in Fig.~\ref{fig:newweights} 
but the lower right vertex $(i=1,j=n)$ involves new modified vertex weights $W_5$, $W_6$ and $W_7$ listed in Fig.~\ref{fig:newweightsbis}. Again we note the crucial property $W_5=W_6$ 
which ensures that configurations are weighted independently on the way the penultimate (just below the uppermost) path reaches the $(i=1,j=n)$ vertex.
For $\ell>1$, a configuration where the uppermost path hits the last column at position $i=\ell$ receives a weight $W_5 W_2^{\ell-2}W_3 W_1^{n-\ell}$ 
while for $\ell=1$, it receives the weight $W_7 W_1^{n-1}$.
The partition function $Z^{20V}$ is now transformed into the quantity
\begin{equation}
\begin{split}
&Z^{20V_{BC1}}_1  \left(\frac{\sqrt{u}(1+u)}{2}\right)\left(\frac{(1+u)(u+{\rm i})(1-{\rm i})}{4}\right)^{n-1}\\
& \qquad +\sum_{\ell=2}^n Z^{20V_{BC1}}_\ell  \left(\frac{1+u}{2}\right)^2 \left(\frac{(1+u)(u-{\rm i})(1+{\rm i})}{4}\right)^{\ell-2}\ \frac{\sqrt{u}(1+u)}{2}\ \left(\frac{(1+u)(u+{\rm i})(1-{\rm i})}{4}\right)^{n-\ell}\ .\\
\end{split}
\label{eq:pf20bis}
\end{equation}
As before, this quantity must be equal to that of \eqref{eq:pf6} for the equivalent  6V model after unraveling (note that the diagonal line with $k=n$ must now be moved to
the lower-left of the central square grid but this does not alter the global  prefactor in \eqref{eq:pf6}). Setting $\tau={\rm i}\frac{u-{\rm i}}{u+{\rm i}}$ and equating the two expressions \eqref{eq:pf20bis} and
\eqref{eq:pf6} leads directly 
to the announced relation \eqref{eq:20to6} between $\hat{Z}^{20V_{BC1}}(\tau)$ and $\hat{Z}^{6V}_{\left[1,\sqrt{2},1\right]}\left(\frac{1+\tau}{2}\right)$. This completes the proof of Theorem~\ref{20to6}
and its Corollary~\ref{cor20to6}.

\subsection{Free energy and partition function from the 6V solution}
From the above identifications, we may now rely on known results on the 6V model with DWBC to explore the statistic of the 20V model with DWBC1 or DWBC2.
A first result concerns the asymptotics of $A_n=Z^{20V}(n)$ for large $n$, directly given from that of $Z^{6V}_{\left[1,\sqrt{2},1\right]}(n)$.
For $(a,b,c)=(1,\sqrt{2},1)$, the value of the anisotropy parameter is $\Delta=\frac{a^2+b^2-c^2}{2\, a\, b}=\frac{1}{\sqrt{2}}$, meaning that the 6V model is in the so-called 
``disordered phase" region. Using the standard parametrization
\begin{equation*}
a=\rho \sin(\lambda-\phi)\ , \quad b=\rho \sin(\lambda+\phi)\ , \quad c=\rho \sin(2\lambda)\ ,
\end{equation*}
with $|\phi|<\lambda$, where we may take in our case $\lambda=3\pi/8$, $\phi=\pi/8$ and $\rho=\sqrt{2}$, the exponential growth of $Z^{6V}_{\left[1,\sqrt{2},1\right]}(n)$, hence of $Z^{20V}(n)$ at large $n$
is known to be \cite{BleFok,PZ6V}.
\begin{equation*}
A_n=Z^{20V}(n)=Z^{6V}_{\left[1,\sqrt{2},1\right]}(n)\underset{n\to \infty}{\sim} \left(\rho\, \frac{\pi \left(\cos(2\phi)-\cos(2\lambda)\right)}{4\lambda\, \cos
\left(\frac{\pi\, t}{2\lambda}\right)}\right)^{n^2}=\left(\frac{4}{3}\right)^{\frac{3}{2}n^2}\ ,
\end{equation*}
hence a free energy per site 
\begin{equation*}
f=\frac{3}{2}{\rm Log}\, \frac{4}{3}\ .
\end{equation*}
If we use for the 6V model a parametrization of the form \eqref{eq:integrable} by taking
\begin{equation}
a(i,j)=z_i-w_j\ , \qquad b(i,j)=q^{-2}\, z_i-q^2\ w_j\ , \qquad c(i,j)=(q^2-q^{-2})\sqrt{z_i\, w_j}\ ,
\label{eq:6Vweights}
\end{equation}
for the weights at the nodes $(i,j)$, the homogeneous values $(a,b,c)=(1,\sqrt{2},1)$ correspond
to choosing\footnote{Here when computing $c$, we adopt the convention that $\sqrt{(q^{2}-q^{-2})^2}=(q^{2}-q^{-2})$. Choosing the 
other branch of the square root would yield $c=-1$ but, from the relation $Z^{6V}_{\left[1,\sqrt{2},1\right]}(n)=(-1)^n Z^{6V}_{\left[1,\sqrt{2},-1\right]}(n)$
for the 6V model with DWBC, we would eventually recover for $Z^{6V}_{\left[1,\sqrt{2},1\right]}(n)$ the very same expression as that \eqref{eq:IKlimit} presented in Section~\ref{sec:proof} below.}
\begin{equation}
q={\rm e}^{{\rm i} \pi/8}\,  \qquad z_i=z=\frac{1}{1-q^4}=\frac{1+{\rm i}}{2}\, \qquad w_j=w=\frac{q^4}{1-q^4}=\frac{{\rm i}-1}{2}
\label{eq:6Vweightchoice}
\end{equation}
for all $i,j=1,\dots, n$. 

For arbitrary spectral parameters, the partition function of the 6V model with DWBC is obtained via the celebrated so-called Izergin-Korepin determinant formula \cite{Korepin,Izergin,IKdet}:
\begin{equation}
Z^{6V}=
\frac{\prod\limits_{i=1}^n c(i,i)\prod\limits_{i,j=1}^n\left(a(i,j)\, b(i,j)\right)}{\prod\limits_{1\leq i<j\leq n}(z_i-z_j)(w_j-w_i)}\det\limits_{1\leq i,j\leq n}\left( \frac{1}{a(i,j)\, b(i,j)}\right)
\label{eq:IK}
\end{equation} 
with $a(i,j)$, $b(i,j)$ and $c(i,j)$ as in 
\eqref{eq:6Vweights}. This expression is singular when the $z_i$ and $w_j$ tend to their homogeneous values \eqref{eq:6Vweightchoice} but we will explain
in Section \ref{sec:proof} how to circumvent this problem.

\section{Quarter-turn symmetric Domino tilings of a holey Aztec square}
\label{sec:cone}

Leaving the ice models aside for a while, we now turn to a different class of problems, that of domino tilings of conic domains. Our interest in these problems 
is motivated by the observation that their configurations are enumerated by the same sequence $A_n$ of \eqref{seqone}. 
A proof of this remarkable fact will given in Section \ref{sec:proof} below.

\subsection{Definition of the model: domain and domino tilings}

\begin{figure}
\begin{center}
\includegraphics[width=14cm]{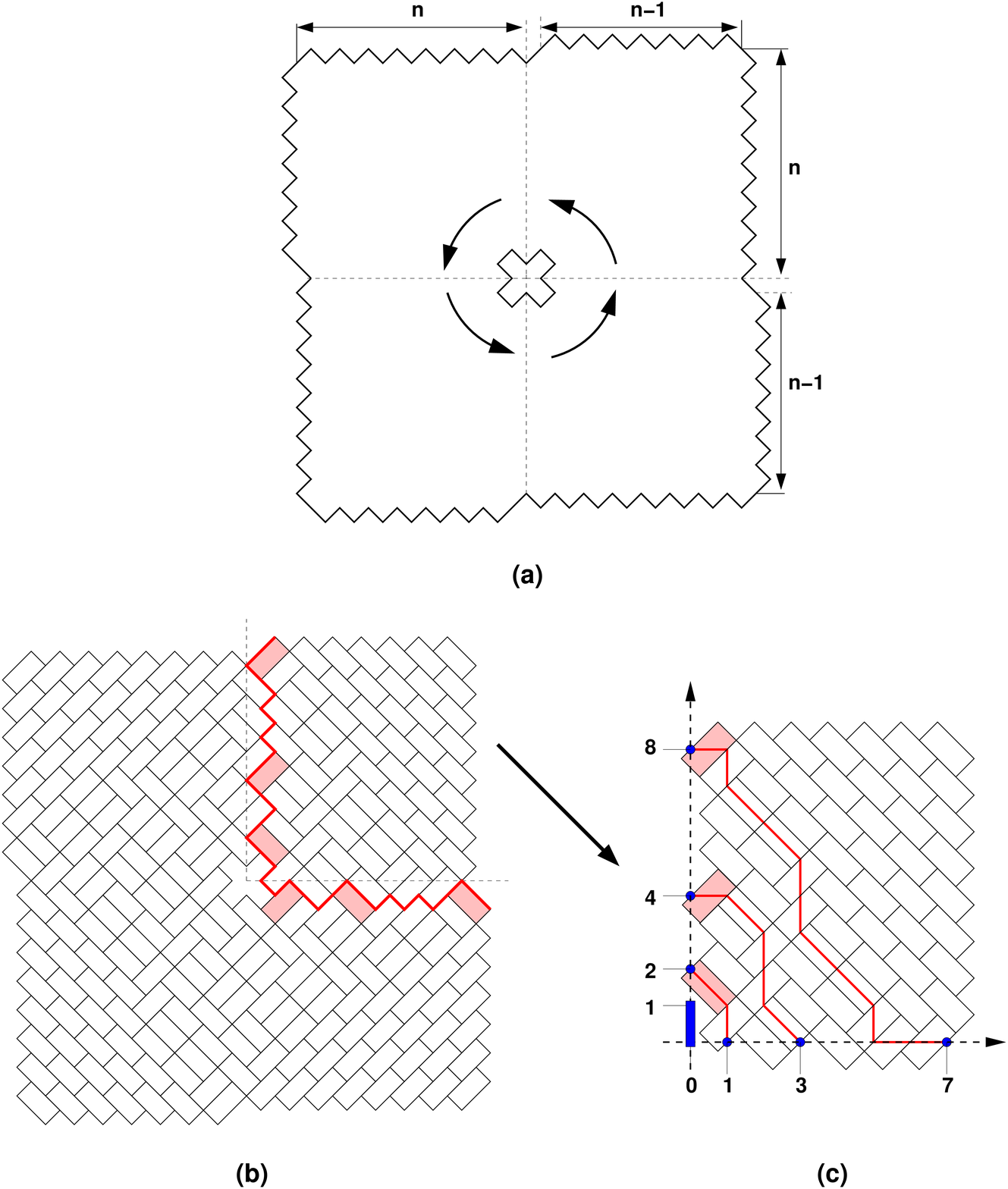}
\end{center}
\caption{\small (a) The quasi-square Aztec-like domain ${\mathcal A}_n$ with a central cross-shaped hole. (b) A sample tiling configuration of ${\mathcal A}_n$ invariant under quarter-turn rotations around the central cross. The dashed lines identify a fundamental domain w.r.t.\ rotational symmetry. We have shaded the dominos from the
top right domain that touch the vertical dashed line: these determine the zig-zag boundary of the fundamental
domain. Summing over all positions of shaded dominos and all tiling configurations of the corresponding 
fundamental domain yields the desired number of tiling configurations of ${\mathcal A}_n$ that are quarter-turn 
symmetric. (c) The tiling of the fundamental domain is in bijection with configurations of non-intersecting
Schr\"oder paths with fixed ends on the shaded dominos, with symmetric positions (here $1,3,7$ and $2,4,8$) on the
horizontal and vertical axes.}
\label{fig:DPPsq}
\end{figure}

We consider the domain ${\mathcal A}_n$ depicted in Fig.~\ref{fig:DPPsq} (a) forming a quasi-square
of Aztec-like shape of size $2n\times 2n$, with a cross-shaped hole in the middle. We wish to enumerate
the domino tilings of this domain that are {\it invariant under a quarter-turn rotation} (i.e.\ of angle $\pi/2$)
around the center of the cross. Equivalently, identifying the domain modulo quarter-turns, the
problem is equivalent to domi\-no tilings of a {\it cone} with a hole at its apex.

The present setting can be viewed as an Aztec-like generalization of that derived in \cite{Lalonde,KrattDPP} to
reformulate Andrews' DPP \cite{AndrewsDPP}. There, the DPP were shown to be in bijection with the rhombus 
tiling configurations
of a quasi-regular hexagon (of shape $(n,n+2,n,n+2,n,n+2)$) with a central
triangular hole of size 2, invariant under rotations of angle $2\pi/3$.

\subsection{Counting the tilings via Schr\"oder paths}

To perform the desired enumeration, let us delineate a fundamental domain w.r.t.\ the rotational symmetry of the 
tiling configurations as shown in Fig.~\ref{fig:DPPsq} (b). We draw axes centered at the center
of the cross, and concentrate on the first quadrant. For any tiling configuration,
we shade the dominos that touch the vertical half-axis
by a corner and belong to the first quadrant. As shown in Fig.~\ref{fig:DPPsq}, these delineate a zig-zag boundary, with a
``defect" protruding to the left for each shaded domino. We then draw a copy of this zig-zag boundary, obtained by rotation
of $-\pi/2$: these delimit the fundamental domain of the tiling. Note that the fundamental domain is entirely determined by 
the set of shaded dominos. To characterize the tiling configurations of such a fundamental domain, we use the standard mapping 
to non intersecting Schr\"oder paths (Fig.~\ref{fig:DPPsq} (c)) obtained by
first bi-coloring the underlying (tilted) square lattice so that say the center of the cross is black,
and by applying the following dictionary:
\begin{equation}\label{dominos}
\raisebox{-.6cm}{\hbox{\epsfxsize=10.cm \epsfbox{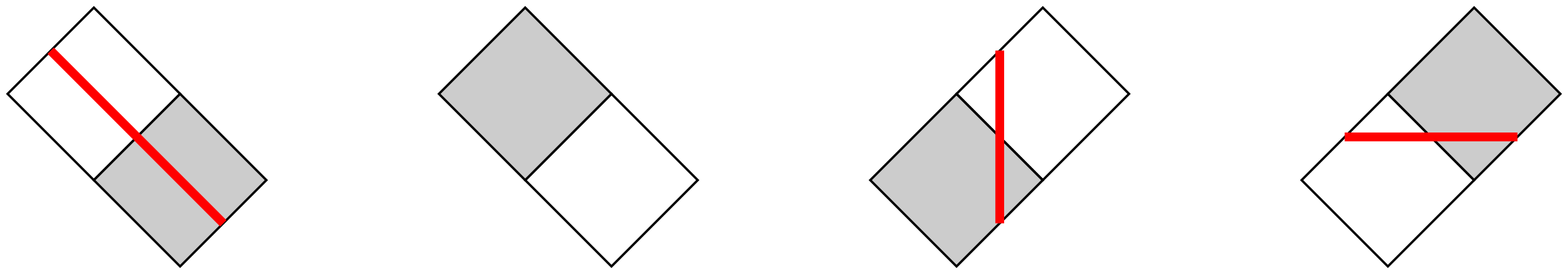}}}
\end{equation}
The Schr\"oder paths are drawn on another $\Z^2$ lattice, with coordinates shifted by $1/2$ on both axes.
 When oriented from their starting point on the (new) horizontal axis to their endpoints on the (new) vertical axis,
 these paths have left, up and diagonal steps
$(-1,0),(0,1),(-1,1)$ respectively. The endpoints of the paths  belong to the shaded dominos and
occupy positions $(0,1+i_k)$ for some integers $i_1,\ldots,\ i_\ell \in [1, n-1]$ since the first position ($i=0$)
is forbidden by the cross-shaped hole. The corresponding starting points occupy positions
$(i_k,0)$ for the {\it same set} $\{i_1,\ldots,\ i_\ell\}$. Moreover, from the
above construction, the endpoints of the paths are on the NW border of the leftmost (white) half of the shaded dominos,
and therefore the last step of all the paths can only be either left or diagonal, but \emph{cannot be up}\ : we shall call these 
{\it restricted} Schr\"oder paths. Finally, the total
number of tiling configurations of ${\mathcal A}_n$ with quarter-turn symmetry is obtained by summing
over all possible positions and orientations of the shaded dominos, i.e.\ over all possible positions $i_1, \ldots, i_\ell$ of the endpoints,
of the number of configurations of non-intersecting restricted Schr\"oder paths with these particular endpoints and their associated symmetric 
starting points.

Schr\"oder paths are readily enumerated via the generating function
$$S(r,s):= \sum_{i,j\geq 0} r^i\, s^j\, S_{i,j} = \frac{1}{1-r-s-r\, s} $$
where $S_{i,j}$ is the number of Schr\"oder paths from point $(i,0)$ to point $(0,j)$. We may think of $r$, $s$
as generators of left and up steps respectively, and $r\, s$ as the generator of a diagonal step: in any term
of the expansion of the generating function of the form $r^\ell s^u(r\, s)^d=r^i\,s^j$ for respectively $\ell,u,d$ left, up and 
diagonal steps, we indeed have $i=\ell+d$ and $j=u+d$. 
Restricted Schr\"oder paths require that the last step cannot be up.
If we now denote by ${\tilde S}_{i,j}$ the number of restricted Schr\"oder paths from $(i,0)$ to $(0,j)$,
then the desired entries ${\tilde S}_{i,j+1}$ of the Gessel-Viennot matrix are generated by:
$$\sum_{i,j\geq 0} {\tilde S}_{i,j+1} r^i\, s^j=\frac{1}{s}\big(r\, s\, S(r,s)+r \left(S(r,s)-S(r,0)\right)\big)
=\frac{2r}{(1-r)(1-r-s-r\, s)}
$$
where we have decomposed the paths according to their last step, respectively with weight $r\ s$ (if diagonal) or $r$ (if left)
and preceded respectively by an arbitrary Schr\"oder path, generated by $S(r,s)$, or by an arbitrary Schr\"oder path
with a height difference of at least one, generated by $(S(r,s)-S(r,0))$.
The global $1/s$ is simply due to the fact that we attach a weight $s^j$ in our definition instead of the natural $s^{j+1}$
associated with a height difference $j+1$. Note that , for $i=0$, ${\tilde S}_{0,j+1}=0$ as there is no restricted Schr\"oder path
with only vertical steps, while, for $j=0$, ${\tilde S}_{i,1}=2i$.  

 \medskip
The partition function of the tiling model is given by the following:
\begin{thm}
\label{thm:T4}
The number $T_4({\mathcal A}_n)$ of quarter-turn symmetric tilings of the domain ${\mathcal A}_n$ is given by 
the $n\times n$ determinant:
$$T_4({\mathcal A}_n)=\det({\mathbb I}_n+ M_n)$$
where $({\mathbb I}_n)_{i,j}=\delta_{i,j}$ and $(M_n)_{i,j}={\tilde S}_{i,j+1}$ for $i,j=0,1,...,n-1$. 
\end{thm}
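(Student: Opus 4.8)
The plan is to establish Theorem~\ref{thm:T4} as a direct application of the Lindstr\"om--Gessel--Viennot (LGV) lemma to the non-intersecting restricted Schr\"oder paths that encode the tilings of the fundamental domain. The preceding discussion already reduces the enumeration of quarter-turn symmetric tilings of ${\mathcal A}_n$ to a sum, over all subsets $\{i_1,\ldots,i_\ell\}\subseteq[1,n-1]$, of the number of families of $\ell$ non-intersecting restricted Schr\"oder paths with starting points $(i_k,0)$ and endpoints $(0,1+i_k)$ for the \emph{same} index set. So the combinatorial content is settled; what remains is to package this sum of LGV determinants into the single determinant $\det({\mathbb I}_n+M_n)$.

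\medskip

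\noindent\emph{First}, I would fix the source and sink sets. For a chosen subset $I=\{i_1<\cdots<i_\ell\}$, the LGV lemma gives the number of non-intersecting path families as $\det\big({\tilde S}_{i_a,\,1+i_b}\big)_{a,b=1}^{\ell}$, \emph{provided} the paths are compatible, i.e.\ that the only non-intersecting configurations arise from the identity pairing (starting point $(i_a,0)$ going to endpoint $(0,1+i_a)$). \emph{Second}, I would check this compatibility: because both the sources and the sinks are indexed by the \emph{same} increasing set $I$, and Schr\"oder paths move leftward and upward, any order-reversing permutation would force two paths to cross, so the only surviving permutation in the LGV signed sum is the identity and the sign is $+1$. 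This makes the LGV determinant a genuine count of non-intersecting families, with no cancellation. \emph{Third}, I would use the standard identity expressing a sum of principal minors over all subsets as a single determinant: for any matrix $M_n$ indexed by $\{0,1,\ldots,n-1\}$,
\begin{equation*}
\det({\mathbb I}_n+M_n)=\sum_{I\subseteq\{0,\ldots,n-1\}}\det\big((M_n)_{I\times I}\big),
\end{equation*}
where $(M_n)_{I\times I}$ is the principal submatrix on rows and columns indexed by $I$. With $(M_n)_{i,j}={\tilde S}_{i,j+1}$, each principal minor $\det\big({\tilde S}_{i,j+1}\big)_{i,j\in I}$ is exactly the LGV count for the index set $I$, so summing over all $I$ reproduces the total tiling number.

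\medskip

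\noindent The one bookkeeping point to handle carefully is the index $i=0$. The endpoints are constrained to positions $(0,1+i_k)$ with $i_k\in[1,n-1]$ because the cross-shaped hole forbids $i=0$; yet the determinant is written over $i,j=0,1,\ldots,n-1$. The resolution is the remark already recorded before the theorem: ${\tilde S}_{0,j+1}=0$ for all $j$, so the row $i=0$ of $M_n$ vanishes and the $i=0$ index contributes nothing to any principal minor that includes it beyond the diagonal $1$ coming from ${\mathbb I}_n$. Hence including $i=0$ in the determinant is harmless and simply accounts, via the identity matrix, for the empty contribution of that axis position.

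\medskip

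\noindent \textbf{The main obstacle} I expect is not the algebra of the subset-sum determinant identity, which is routine, but rather the careful verification that the LGV hypotheses genuinely hold here --- in particular confirming that the \emph{restricted} nature of the paths (last step never up) and the shared index set $I$ for sources and sinks together force the identity permutation to be the only non-crossing one, so that each principal minor equals a true non-negative count rather than a signed alternating sum. Establishing this ``no bad permutations'' property cleanly, together with matching the geometric constraint $i_k\in[1,n-1]$ against the analytic fact ${\tilde S}_{0,j+1}=0$, is where the real care is needed; once that is in place, Theorem~\ref{thm:T4} follows immediately from the principal-minor expansion of $\det({\mathbb I}_n+M_n)$.
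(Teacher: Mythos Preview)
Your proposal is correct and follows essentially the same approach as the paper: apply LGV to each fixed index set $I$ to get a principal minor of $M_n$, then sum over all $I$ via the expansion $\det({\mathbb I}_n+M_n)=\sum_I \det((M_n)_{I\times I})$ (which the paper calls the Cauchy--Binet identity), and dispose of the $i=0$ contribution using ${\tilde S}_{0,j+1}=0$. The only difference is expository: you explicitly flag the verification of the LGV compatibility hypothesis (only the identity permutation survives) as the main care point, whereas the paper simply invokes the LGV formula without comment on this standard check.
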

\begin{proof}
Recall the Lindstr\"om Gessel-Viennot (LGV) determinant formula \cite{LGV1,GV}: the number 
of non-intersecting restricted Schr\"oder paths with fixed
starting points $(i_k,0)$ and endpoints $(0,1+i_k)$, $k=1,2,...,\ell$ is given by the sub-determinant
$ \vert M_n \vert_{i_1,...,i_\ell}^{i_1,...,i_\ell}$ of the matrix $M_n$ obtained by keeping rows and columns with labels $i_1,...,i_\ell$
(corresponding respectively to the starting and ending points).
The theorem follows from the standard Cauchy-Binet identity:
$$\det({\mathbb I}_n+ M_n)=\sum_{\ell=0}^{n}\  \sum_{0\leq i_1<i_2\cdots<i_\ell\leq n-1}\vert M_n \vert_{i_1,...,i_\ell}^{i_1,...,i_\ell} ,$$
which realizes the desired sum over all possible choices of symmetric starting and endpoints. Note that the sum includes paths with $i_1=0$,
which do not contribute as $(M_n)_{0,j}={\tilde S}_{0,j+1}=0$.
\end{proof}

Using the generating function for the infinite matrix ${\mathbb I}+M$: 
\begin{equation}
T_4(r,s)=\sum_{i,j\geq 0} ({\mathbb I}+M)_{i,j} r^i\, s^j=\frac{1}{1-r\, s}+\frac{2r}{(1-r)(1-r-s-r\, s)}
\label{eq:genT4}
\end{equation}
we easily generate the numbers $T_4({\mathcal A}_n)$ from
\begin{equation}
T_4({\mathcal A}_n)=\det\limits_{0\leq i,i\leq n-1}\left(\left.\left(\frac{1}{1-r\, s}+\frac{2r}{(1-r)(1-r-s-r\, s)}
\right)\right\vert_{r^{i}s^{j}}\right)
\label{eq:detT4}
\end{equation}
(here $f(r,s)\vert_{r^i s^j}$ denotes the coefficient of $r^i s^j$ in a double series expansion of $f(r,s)$ in $r$ and $s$).
Remarkably, \emph{these numbers match precisely the sequence $A_n$ of \eqref{seqone}}.

\subsection{Refined enumeration}

In this section, we consider refined quarter-turn symmetric tiling configurations of ${\mathcal A}_n$.
The origin of the refinement is best explained in the formulation as non-intersecting Schr\"oder paths
of Fig.~\ref{fig:DPPsq}~(c). Comparing these configurations to those attached to DPP in \cite{KrattDPP}, we are led to
consider the following two statistics for restricted Schr\"oder paths. 
For each 
such path $p$ from $(i,0)\to (0,j)$, where $0\leq i\leq n-1$ and $1\leq j\leq n$,
we define numbers $\ell_1(p)$ and $\ell_2(p)$ as:
\begin{itemize}
\item if $j<n$, $\ell_1(p)=\ell_2(p)=0$.
\item
if $j=n$, $\ell_1(p)$ is the x coordinate of the {\it last} point with y coordinate $\leq n-1$.
\item
if $j=n$, $\ell_2(p)$ is the x coordinate of the {\it first} point with y coordinate equal to $ n$.
\end{itemize}

This allows to define the refined numbers $T_{4,k}^{(m)}({\mathcal A}_n)$, $m=1,2$ of non-intersecting restricted Schr\"oder
path configurations $\mathcal P$ with\footnote{Note that at most one path contributes to this sum, namely
the topmost one if it hits the height $n$.} $\ell_m(\mathcal{P}):=\sum_{p\in {\mathcal P}} \ell_m(p)=k$, for some $k=0,1,...,n-1$. 
We refer to such models as type 1 or 2 according to the value of $m$.
In turn, these correspond to a refinement 
of the quarter-turn symmetric tiling configurations of ${\mathcal A}_n$:
\begin{itemize}
\item[Type 1:] $\ell_1(\mathcal{P})=k$ iff  the top row of the fundamental domain corresponds to the following tiling pattern
(with $k$ or $k-1$ up-right dominos):
\begin{equation*}
\raisebox{-.6cm}{\hbox{\epsfxsize=9.cm \epsfbox{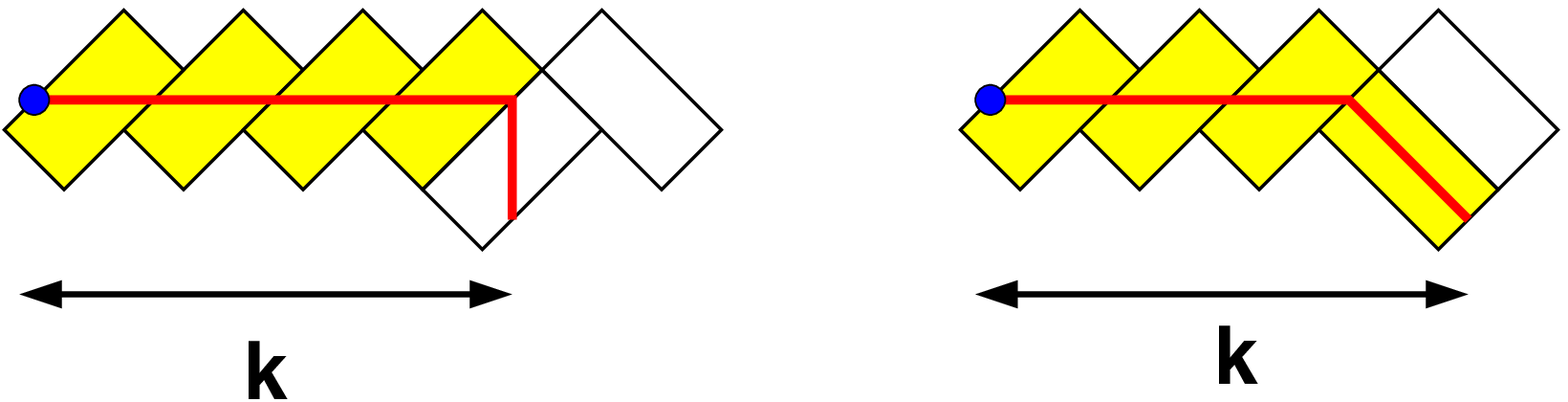}}}
\end{equation*}
\item[Type 2:] $\ell_2(\mathcal{P})=k$ iff  there are exactly $k$ up-right dominos in the top row of the fundamental domain.
\begin{equation*}
\raisebox{-.6cm}{\hbox{\epsfxsize=4.5cm \epsfbox{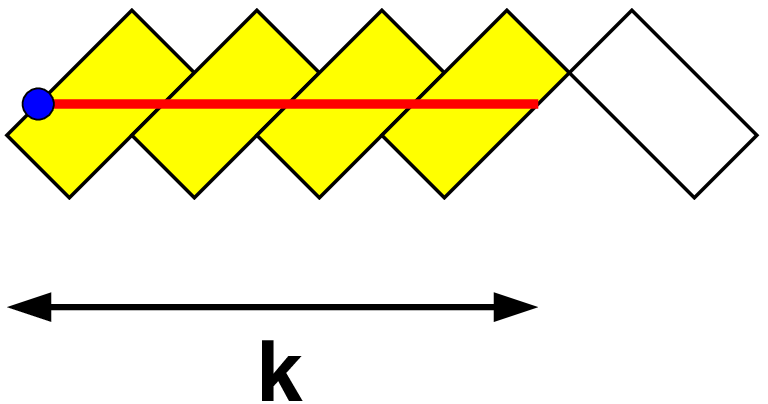}}}
\end{equation*}
\end{itemize}

The generating polynomial for the type $m=1,2$ model:
$$T_{4}^{(m)}({\mathcal A}_n;\tau):=\sum_{k=0}^{n-1} \tau^k\, T_{4,k}^{(m)}({\mathcal A}_n)$$
is interpreted as the partition functions for non-intersecting restricted Schr\"oder
path configurations with some extra weight:
\begin{itemize}
\item[Type 1:] $\tau$ per horizontal step taken at vertical position $y=n$ and  $\tau$ for a possible diagonal step from $y=n-1$ to $y=n$.
\item[Type 2:] $\tau$ per horizontal step taken at vertical position $y=n$.
\end{itemize}

\begin{thm}
\label{refinT4}
The partition functions $T_{4}^{(m)}({\mathcal A}_n;\tau)$ for type $m$ refined quarter-turn symmetric domino tilings of the domain ${\mathcal A}_n$ are given by:
\begin{equation*}
\begin{split}
T_{4}^{(1)}({\mathcal A}_n;\tau)
&=\det\limits_{0\leq i,j\leq n-1}\left(
\begin{matrix}
\left.\left(\frac{1}{1-r\, s}+\frac{2 r}{(1-r)(1-r-s-r\, s)}\right)\right\vert_{r^{i}s^{j}}& j\leq n-2\\
\left.\left(\frac{1}{1-r\, s}+\frac{2 \tau r}{(1-\tau\, r)(1-r-s-r\, s)}\right)\right\vert_{r^{i}s^{j}}& j= n-1\\
\end{matrix}
\right)
\\
&=\det\limits_{0\leq i,j\leq n-1}\left(\left.\left(
\frac{1}{1-r\, s}+\frac{2 r}{(1-r)(1-r-s-r\, s)}+s^{n-1}\,r \, \left\{\frac{2\tau }{1-\tau\, r} -\frac{2}{1-r}\right\} \frac{(1+r)^{n-1}}{(1-r)^n}\right)\right\vert_{r^i s^j}\right) .\\
\end{split}
\end{equation*}
\begin{equation*}
\begin{split}
T_{4}^{(2)}({\mathcal A}_n;\tau)
&=\det\limits_{0\leq i,j\leq n-1}\left(
\begin{matrix}
\left.\left(\frac{1}{1-r\, s}+\frac{2r}{(1-r)(1-r-s-r\, s)}\right)\right\vert_{r^{i}s^{j}}& j\leq n-2\\
\left.\left(\frac{1}{1-r\, s}+\frac{(1+\tau)r}{(1-\tau\, r)(1-r-s-r\, s)}\right)\right\vert_{r^{i}s^{j}}& j= n-1\\
\end{matrix}
\right)
\\
&=\det\limits_{0\leq i,j\leq n-1}\left(\left.\left(
\frac{1}{1-r\, s}+\frac{2 r}{(1-r)(1-r-s-r\, s)}+s^{n-1}\,r \, \left\{\frac{1+\tau}{1-\tau\, r} -\frac{2}{1-r}\right\} \frac{(1+r)^{n-1}}{(1-r)^n}\right)\right\vert_{r^i s^j}\right) .\\
\end{split}
\end{equation*}
\end{thm}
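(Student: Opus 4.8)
The plan is to adapt the proof of Theorem~\ref{thm:T4} by incorporating the refinement weight $\tau$ directly into the Lindstr\"om--Gessel--Viennot (LGV) setup. Recall that $T_4(\mathcal{A}_n) = \det(\mathbb{I}_n + M_n)$ arose from the Cauchy--Binet identity applied to the matrix $M_n$ whose entries count restricted Schr\"oder paths. The key observation is that the statistics $\ell_1$ and $\ell_2$ are supported entirely on the \emph{single} path that reaches height $y=n$, and that by the definition of the endpoints $(0,1+i_k)$ with $i_k\le n-1$, only the path ending at the \emph{topmost} endpoint (i.e.\ the column $j=n-1$ in the Gessel--Viennot matrix, corresponding to endpoint height $n$) can ever attain $y=n$. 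Therefore the weight $\tau$ only modifies the generating function for the entries in the last column $j=n-1$, leaving all other columns $j\le n-2$ equal to their unweighted values. This is exactly the block structure displayed in the first determinantal expression of each case.

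First I would re-derive the generating function for the restricted Schr\"oder path entries, but now tracking the refined weight. For Type~2, the weight is $\tau$ per horizontal (left) step taken at height $y=n$; for Type~1, one additionally weights by $\tau$ a diagonal step arriving at $y=n$ from $y=n-1$. The path reaching endpoint $(0,n)$ is built by first taking an arbitrary restricted Schr\"oder path up to height $n$, and then a (possibly empty) sequence of horizontal steps along the line $y=n$. Since a step along $y=n$ is necessarily a left step weighted $r$, decorating each such step with $\tau$ replaces the factor $1/(1-r)$ governing the terminal horizontal run by $1/(1-\tau\,r)$. For Type~2 the arrival step onto $y=n$ may be either diagonal (weight $r\,s$, \emph{un}weighted by $\tau$) or horizontal (weight $r$, unweighted since it is the first step at height $n$ only if preceded by a vertical rise); carrying out the same decomposition as in the unrefined case but replacing the trailing $1/(1-r)$ by $1/(1-\tau\,r)$ yields the modified numerator $(1+\tau)r$ in place of $2r$. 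For Type~1 the diagonal arrival step from $y=n-1$ to $y=n$ is itself weighted $\tau$, which produces instead the numerator $2\tau r$ with denominator $(1-\tau\,r)$. These computations give precisely the last-column generating functions $\frac{1}{1-rs}+\frac{2\tau r}{(1-\tau r)(1-r-s-rs)}$ and $\frac{1}{1-rs}+\frac{(1+\tau)r}{(1-\tau r)(1-r-s-rs)}$ claimed in the theorem.

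Next, I would verify that the LGV/Cauchy--Binet mechanism still applies. The refinement weight is multiplicative over paths and depends only on the portion of the topmost path above height $n-1$; since non-intersection already forces at most one path to reach $y=n$, the weighted count of non-intersecting families is still given by a signed sum of products of single-path weights, so the weighted analogue of $\det(\mathbb{I}_n+M_n)$ holds verbatim with the last column of $M_n$ replaced by its $\tau$-weighted version. This establishes the first (block-form) determinant in each case. To obtain the second expression, I would extract the coefficient of $s^{n-1}$ from the difference between the weighted and unweighted last-column generating functions: the unweighted column contributes the same series as the generic columns, and the correction term, supported on $j=n-1$, is $s^{n-1} r\{\tfrac{2\tau}{1-\tau r}-\tfrac{2}{1-r}\}$ (resp.\ $\{\tfrac{1+\tau}{1-\tau r}-\tfrac{2}{1-r}\}$) times the $s$-independent generating function $\frac{(1+r)^{n-1}}{(1-r)^n}$ for the coefficient of $s^{n-1}$ in $\frac{1}{1-r-s-rs}=\frac{1}{(1-r)-s(1+r)}$. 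Since adding to the last column a multiple of a series that is the restriction of the generic column to $j=n-1$ does not change which rows/columns appear, the two determinants agree; this rewriting just folds the block distinction into a single uniform generating function.

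The main obstacle I anticipate is the bookkeeping in the generating-function derivation of the last step: one must correctly identify the coefficient of $s^{n-1}$ in $1/(1-r-s-rs)$ as $(1+r)^{n-1}/(1-r)^{n}$ and check that the factor $r$ and the bracketed difference combine exactly as stated, including the treatment of the boundary behavior at $y=n-1$ versus $y=n$ and the precise role of the $j=n-1$ column only (not $j=n$, which lies outside the matrix range $0\le j\le n-1$). Care is needed to confirm that the endpoint height $n$ genuinely corresponds to matrix column index $j=n-1$ under the shift ${\tilde S}_{i,j+1}=(M_n)_{i,j}$, and that the geometric interpretation of $\ell_1,\ell_2$ via top-row domino patterns matches the path statistics; but these are consistency checks rather than genuine difficulties, and the LGV framework from Theorem~\ref{thm:T4} does the heavy lifting.
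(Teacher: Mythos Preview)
Your proposal is correct and follows essentially the same approach as the paper: you observe that the $\tau$-weight affects only the last column $j=n-1$ of the LGV matrix, derive the modified single-path generating functions $\frac{2\tau r}{(1-\tau r)(1-r-s-rs)}$ and $\frac{(1+\tau)r}{(1-\tau r)(1-r-s-rs)}$ by decomposing the restricted Schr\"oder path according to its behavior near height $n$, apply Cauchy--Binet exactly as in Theorem~\ref{thm:T4}, and then rewrite the block-form determinant in the second, uniform form by extracting the $s^{n-1}$ coefficient via $\frac{1}{1-r-s-rs}\big|_{s^{n-1}}=\frac{(1+r)^{n-1}}{(1-r)^n}$. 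The paper's proof is identical in structure, with only minor differences in how the last-step decomposition is phrased.
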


To prove the theorem, we may evaluate $T_{4}^{(m)}({\mathcal A}_n;\tau)$ by use of the LGV formula, by noticing that only the paths
ending at $(x,y)=(0,j+1)$ with $j=n-1$ receive a modified weight. More precisely,
the partition function for a restricted Schr\"oder path from $(i,0)$ to $(0,n)$ in types $m=1,2$ correspond to the coefficient of $s^{j}=s^{n-1}$ in the generating functions:
\begin{eqnarray*}
{\rm Type 1:}&&\frac{1}{s}\left\{  \left(\frac{\tau\, r}{1-\tau\, r}s +\frac{1}{1-\tau\, r}\tau\, r\, s\right) S(r,s)\right\}=\frac{2\tau\,r}{1-\tau\, r}
\frac{1}{1-r-s - r\, s}\\
{\rm Type 2:}&&\frac{1}{s}\left\{  \left(r\, s+\frac{\tau\, r}{1-\tau\, r}(s+r\, s)\right) S(r,s)\right\}=\frac{(1+\tau)r}{1-\tau\, r}
\frac{1}{1-r-s - r\, s}
\end{eqnarray*}
In type 1, we have performed a decomposition of the path according to its last visit at height $n-1$: it is either followed
by an up step (generated by $s$) 
and then by an arbitrary succession of $k$ horizontal steps, with $k\geq 1$ since the up step cannot take place at $x=0$ for a restricted Schr\"oder path
(generated by
$\tau\, r/(1-\tau\, r)$) or it is followed by a diagonal step from $y=n-1$ to $y=n$ (generated by $\tau\, r\, s$) and then by
an arbitrary succession of $k\geq 0$ horizontal steps at $y=n$ (generated by $1/(1-\tau\, r)$), all of which are preceded
by a standard Schr\"oder path, generated by $S(r,s)$. As before, the $1/s$ global prefactor comes from our choice of extracting the coefficient of $s^{j}=s^{n-1}$
instead of the natural one of $s^{j+1}$. 

In type 2, this is again obtained by decomposing the path according its last step(s): it is either a single diagonal
step (generated by $r\, s$) or an arbitrary succession of $k\geq 1$ left steps (generated by $\tau\, r/(1-\tau\, r)$)
preceded by either an up  or a diagonal step (generated by $s+r\, s$) and the preceding part of the path is a generic
Schr\"oder path, generated by $S(r,s)$.

The total partition functions $\tilde{S}_{i,n}^{(m)}(\tau)$ for a restricted Schr\"oder path from $(i,0)$ to $(0,n)$ 
in type $m=1,2$ read:
\begin{eqnarray*}
\tilde{S}_{i,n}^{(1)}(\tau)&=&\left. \frac{2 \tau\,r}{1-\tau\, r}
\frac{1}{1-r-s - r\, s}\right\vert_{r^i\,s^{n-1}}=\left.\frac{2\tau\,r}{1-\tau\, s}\frac{(1+r)^{n-1}}{(1-r)^{n}}\right\vert_{r^i}\\
\tilde{S}_{i,n}^{(2)}(\tau)&=&\left. \frac{(1+\tau)r}{1-\tau\, r}
\frac{1}{1-r-s -r\, s }\right\vert_{r^i\,s^{n-1}}=\left.\frac{(1+\tau)r}{1-\tau\, r}\frac{(1+r)^{n-1}}{(1-r)^{n}}\right\vert_{r^i}\ .
\end{eqnarray*}
Note that the partition functions for all the other paths (ending at vertical coordinate $y\leq n-1$) are the same
as before, namely equal to ${\tilde S}_{i,j}$ for paths from $(i,0)$ to $(0,j)$. 
Applying again the LGV formula, we see that the partition function $T_{4}^{(m)}({\mathcal A}_n;\tau)$ is the determinant
of a  matrix with an analogous form ${\mathbb I}_n+M_n^{(m)}(\tau)$, where the $n\times n$ matrix 
$M_n^{(m)}(\tau)$ differs from $M_n$
only in its last column, in which  the entries ${\tilde S}_{i,n}$ are replaced by the new partition
functions $\tilde{S}_{i,n}^{(m)}(\tau)$. We deduce the following:
$$T_{4}^{(m)}({\mathcal A}_n;\tau)=\det\left( {\mathbb I}_n+M_n^{(m)}(\tau) \right), \quad M_n^{(m)}(\tau)_{i,j}=\left\{ \begin{matrix}
\tilde{S}_{i,j+1}& \ {\rm for}\ j\in [0,n-2]\\
\tilde{S}_{i,n}^{(m)}(\tau)& \ {\rm for} j=n-1
\end{matrix}\right.
$$
which is nothing but Theorem \ref{refinT4} in its first form.

To put this result in the second and more compact form of Theorem~\ref{refinT4}, let us compute the generating function $T_4^{(m)}(r,s;\tau)=\sum_{i,j\geq 0}
r^i s^j ({\mathbb I}+M^{(m)}(\tau))_{i,j})$
for the new infinite matrix whose $n\times n$ truncations' determinant
yields the polynomial $T_{4}^{(m)}({\mathcal A}_n;\tau)$.
We have
\begin{lemma}\label{genqts}
\begin{eqnarray*}
T_4^{(1)}(r,s;\tau)&=&\frac{1}{1-r\, s} +\frac{2 r}{(1-r)(1-r-s-r\, s)} +s^{n-1} \, r\, \frac{2(\tau-1)}{1-\tau\, r} \frac{(1+r)^{n-1}}{(1-r)^{n+1}} \\
T_4^{(2)}(r,s;\tau)&=&\frac{1}{1-r\, s} +\frac{2 r}{(1-r)(1-r-s-r\, s)} +s^{n-1} \, r\, \frac{\tau-1}{1-\tau\, r} \frac{(1+r)^{n}}{(1-r)^{n+1}}
\end{eqnarray*}
\end{lemma}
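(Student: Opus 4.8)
The plan is to compute $T_4^{(m)}(r,s;\tau)$ by comparing the modified infinite matrix $\mathbb{I}+M^{(m)}(\tau)$ with the unrefined one $\mathbb{I}+M$, whose generating function $T_4(r,s)$ is given in \eqref{eq:genT4}. The crucial observation is that, by the definition of $M_n^{(m)}(\tau)$ displayed just above the lemma, these two matrices agree in every column except $j=n-1$: the columns $j\in[0,n-2]$ still carry the entries $\tilde{S}_{i,j+1}$, and only the entries of column $n-1$ are replaced, from $\tilde{S}_{i,n}$ to $\tilde{S}_{i,n}^{(m)}(\tau)$ (the diagonal contribution of $\mathbb{I}$ being unaffected). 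Hence the difference of the two generating functions is supported on the single power $s^{n-1}$:
$$T_4^{(m)}(r,s;\tau)-T_4(r,s)=s^{n-1}\sum_{i\geq 0}r^i\left(\tilde{S}_{i,n}^{(m)}(\tau)-\tilde{S}_{i,n}\right).$$

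First I would record the two one-variable series on the right. The corrected series $\sum_{i\geq 0}r^i\,\tilde{S}_{i,n}^{(m)}(\tau)$ is exactly the closed form obtained just before the lemma, namely $\frac{2\tau r}{1-\tau r}\frac{(1+r)^{n-1}}{(1-r)^n}$ for $m=1$ and $\frac{(1+\tau)r}{1-\tau r}\frac{(1+r)^{n-1}}{(1-r)^n}$ for $m=2$. For the unrefined series I would extract the coefficient of $s^{n-1}$ from the column generating function $\frac{2r}{(1-r)(1-r-s-r\,s)}$ of $\tilde{S}_{i,j+1}$: writing $1-r-s-r\,s=(1-r)-s(1+r)$ and expanding the resulting geometric series in $s$ gives $\left.\frac{1}{1-r-s-r\,s}\right|_{s^{n-1}}=\frac{(1+r)^{n-1}}{(1-r)^n}$, whence $\sum_{i\geq 0}r^i\,\tilde{S}_{i,n}=\frac{2r(1+r)^{n-1}}{(1-r)^{n+1}}$.

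Finally I would subtract and simplify. Factoring the common $\frac{r(1+r)^{n-1}}{(1-r)^n}$ out of both terms reduces each case to the bracketed combinations already appearing in the second form of Theorem~\ref{refinT4}, namely $\frac{2\tau}{1-\tau r}-\frac{2}{1-r}$ for $m=1$ and $\frac{1+\tau}{1-\tau r}-\frac{2}{1-r}$ for $m=2$. A one-line partial-fraction reduction yields $\frac{2\tau}{1-\tau r}-\frac{2}{1-r}=\frac{2(\tau-1)}{(1-\tau r)(1-r)}$ and $\frac{1+\tau}{1-\tau r}-\frac{2}{1-r}=\frac{(\tau-1)(1+r)}{(1-\tau r)(1-r)}$, and substituting these back gives precisely the two claimed expressions; the extra factor $(1+r)$ in the type-2 numerator is exactly what promotes $(1+r)^{n-1}$ to $(1+r)^n$ there. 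There is no genuine obstacle in this argument: its entire content is the localization of the matrix change to column $n-1$ together with the $s^{n-1}$ coefficient extraction, and the only point demanding a little care is keeping track of that single power of $(1+r)$ that distinguishes the two types.
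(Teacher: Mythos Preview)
Your proposal is correct and follows essentially the same approach as the paper: subtract the column $j=n-1$ contribution of $T_4(r,s)$ and add back the modified one, using the coefficient extraction $\left.\frac{1}{1-r-s-rs}\right|_{s^{n-1}}=\frac{(1+r)^{n-1}}{(1-r)^n}$. You in fact go one step further than the paper, which stops at the bracketed form $\{\cdots\}$ and simply says ``the Lemma follows,'' by explicitly carrying out the partial-fraction reductions $\frac{2\tau}{1-\tau r}-\frac{2}{1-r}=\frac{2(\tau-1)}{(1-\tau r)(1-r)}$ and $\frac{1+\tau}{1-\tau r}-\frac{2}{1-r}=\frac{(\tau-1)(1+r)}{(1-\tau r)(1-r)}$; the only point worth making explicit (as the paper does) is that the entries for $j\geq n$ of the infinite matrix are irrelevant to the $n\times n$ truncation and may be taken to coincide with those of $M$.
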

\begin{proof}
To get the new generating function from $T_4(r,s)$ in \eqref{eq:genT4}, we must subtract the contribution of the last column,
i.e. $s^{n-1}$ times the coefficient of $s^{n-1}$ in $2r/((1-r)(1-r-s-r\, s))$ and add the new generating function
$s^{n-1}\sum_{i\geq 0}\tilde{S}_{i,n}^{(m)}(\tau)r^i$. Note that any term of order $\geq n$ in $r$ or $s$ is irrelevant and may therefore be chosen arbitrarily, as it does not affect the truncation to size $n$. The net result is the generating function:
\begin{eqnarray*}
T_4^{(1)}(r,s;\tau)&=&\frac{1}{1-r\, s}+\frac{2 r}{(1-r)(1-r-s-r\, s)}+s^{n-1}\,r \, \left\{\frac{2\tau}{1-\tau\, r} -\frac{2}{1-r}\right\} \frac{(1+r)^{n-1}}{(1-r)^n}\\
T_4^{(2)}(r,s;\tau)&=&\frac{1}{1-r\, s}+\frac{2 r}{(1-r)(1-r-s-r\, s)}+s^{n-1}\,r \, \left\{\frac{1+\tau}{1-\tau\, r} -\frac{2}{1-r}\right\} \frac{(1+r)^{n-1}}{(1-r)^n}
\end{eqnarray*}
and the Lemma follows.
\end{proof}

Theorem \ref{refinT4} allows for a very efficient calculation of the partition functions
$T_{4}^{(m)}({\mathcal A}_n;\tau)$. The first few terms read as follows.

\noindent For type 1, the polynomials $T_{4}^{(1)}({\mathcal A}_n;\tau)$ for $n=1,\ldots,7$ read:
\begin{eqnarray*}
&&1\\
&&1+2\tau\\
&&3+14\tau+6 \tau^2\\
&&23 + 198 \tau + 166 \tau^2 + 46 \tau^3\\
&&433 + 6322 \tau + 7874 \tau^2 + 4210 \tau^3 + 866 \tau^4\\
&&19705 + 468866 \tau + 777258 \tau^2 + 606026 \tau^3 + 240578 \tau^4 + 39410 \tau^5 \\
&&2151843 + 81652574 \tau + 169682406 \tau^2 + 172604734 \tau^3 + 99699558 \tau^4 + 
 31601534 \tau^5 + 4303686 \tau^6 
\end{eqnarray*}
For type 2, the polynomials $T_{4}^{(2)}({\mathcal A}_n;\tau)$ for $n=1,\ldots,7$ read:
\begin{eqnarray*}
&&1\\
&&2+\tau\\
&&10+10\tau+3 \tau^2\\
&&122 + 182 \tau + 106 \tau^2 + 23 \tau^3\\
&&3594 + 7098 \tau + 6042 \tau^2 + 2538 \tau^3 + 433 \tau^4\\
&&254138 + 623062 \tau + 691642 \tau^2 + 423302 \tau^3 + 139994 \tau^4 + 19705 \tau^5 \\
&&42978130 + 125667490 \tau + 171143570 \tau^2 + 136152146 \tau^3 + 
 65650546 \tau^4 + 17952610 \tau^5 + 2151843 \tau^6
\end{eqnarray*}

We note the identities for $n\geq 1$
\begin{equation*}
T_{4}^{(1)}({\mathcal A}_n;0)=T_{4}({\mathcal A}_{n-1})\ , 
\quad  T_{4}^{(1)}({\mathcal A}_n;\tau)\vert_{\tau^{n-1}}=2\,  T_{4}({\mathcal A}_{n-1})\ ,
\quad T_{4}^{(2)}({\mathcal A}_n;\tau)\vert_{\tau^{n-1}}=T_{4}({\mathcal A}_{n-1})\ .
\end{equation*}
All these identities have an easy explanation in terms of paths, which we leave as an exercise for the reader.

\section{Proof of the equivalence between 20V-DWBC1,2 and holey square tilings}
\label{sec:proof}

\subsection{From the Izergin-Korepin to the Gessel-Viennot 
determinant}
The aim of this Section is to prove the identity:
\begin{thm}
\label{thm:Z20T4}
The number of configurations for the 20V model with DWBC1 or DWBC2 on an $n\times n$ grid is equal to that of the quarter-turn symmetric domino tilings of the domain $\mathcal{A}_n$,
namely:
\begin{equation}
Z^{20V}(n)=T_4({\mathcal A}_n)\ .
\label{eq:Z20T4}
\end{equation}
\end{thm}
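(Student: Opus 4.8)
The plan is to prove Theorem~\ref{thm:Z20T4} by showing that \emph{both} sides of \eqref{eq:Z20T4} equal the determinant of a finite truncation of one and the same infinite matrix. On the tiling side this is already accomplished: by Theorem~\ref{thm:T4} and \eqref{eq:detT4} we have $T_4(\mathcal{A}_n)=\det_{0\leq i,j\leq n-1}\left(T_4(r,s)\vert_{r^i s^j}\right)$, where the generating function $T_4(r,s)$ is given in \eqref{eq:genT4}. So the entire task reduces to bringing $Z^{20V}(n)=Z^{6V}_{[1,\sqrt{2},1]}(n)$ (Theorem~\ref{An6V}) into the same determinantal shape. First I would start from the Izergin--Korepin formula \eqref{eq:IK}, which is valid for generic spectral parameters but becomes a $0/0$ indeterminate at the homogeneous point \eqref{eq:6Vweightchoice}. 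The standard remedy, as in \cite{BDFPZ1}, is to keep the $z_i$ and $w_j$ distinct, expand the determinant $\det\left(1/(a(i,j)b(i,j))\right)$ together with the Vandermonde-type prefactor, and then take the confluent limit $z_i\to z$, $w_j\to w$ via a controlled l'Hôpital/Taylor expansion. This produces a determinant whose entries involve derivatives of a single generating function evaluated at the homogeneous point.

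Next I would massage the resulting homogeneous-limit expression into a clean determinant of the form $\det_{0\leq i,j\leq n-1}\left(G(r,s)\vert_{r^i s^j}\right)$ for some explicit rational generating function $G(r,s)$. The natural route is to recognize that the confluent IK limit yields entries that are coefficients in a double power series, after suitable row/column operations (adding multiples of rows to cancel the Vandermonde denominators, and changing basis in the space of polynomials to replace derivatives at a point by coefficient extraction). I expect the substitution relating the 6V spectral variables $z,w,q$ to the tiling variables $r,s$ to be something like a Möbius change of variable sending the multiplicative spectral parameters to the additive step-generators of the Schröder paths; the value $q=e^{i\pi/8}$ and the specific $z,w$ in \eqref{eq:6Vweightchoice} should conspire to produce exactly the weights $(1,\sqrt{2},1)$, hence the diagonal-step weight that matches the $r\,s$ generator in $S(r,s)=1/(1-r-s-r\,s)$.

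The heart of the argument is then a \emph{generating-function identity}: the $G(r,s)$ coming out of the IK confluent limit must be shown equal to $T_4(r,s)$ of \eqref{eq:genT4}, at least up to terms of order $\geq n$ in $r$ or $s$ (which, as noted after Lemma~\ref{genqts}, do not affect the $n\times n$ truncation) and up to row/column operations that preserve the determinant. Rather than matching the two rational functions on the nose, it is cleaner to argue that two generating functions whose truncated-determinant sequences agree, and which satisfy the same first-order structure, must give the same determinants; alternatively one establishes the equality of $G(r,s)$ and $T_4(r,s)$ modulo the ideal generated by high-order monomials, using partial fractions in $r$ and $s$.

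The main obstacle I anticipate is \textbf{the confluent (homogeneous) limit of the Izergin--Korepin determinant}: extracting a finite, manifestly determinantal answer from the $0/0$ form in \eqref{eq:IK} requires careful bookkeeping of the Vandermonde cancellations and the Taylor expansions in the coalescing spectral parameters, and it is here that one must correctly identify the single-variable generating function whose coefficients populate the IK matrix. Once that generating function is in hand, matching it to $T_4(r,s)$ via the change of variables and elementary row/column manipulations should be comparatively routine, though one must be attentive to the branch convention for $c$ flagged in the footnote to \eqref{eq:6Vweightchoice} and to the harmless freedom in the order-$\geq n$ tail of the generating functions.
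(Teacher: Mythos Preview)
Your strategy is essentially the paper's own: reduce via Theorem~\ref{An6V} to $Z^{6V}_{[1,\sqrt{2},1]}(n)$, take the confluent limit of the Izergin--Korepin determinant to get a determinant of Taylor coefficients of a single rational function, and then match that function to $T_4(r,s)$ by determinant-preserving row/column operations induced by changes of variable.

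The one place your sketch drifts is the final matching step. Your first alternative---``argue that two generating functions whose truncated-determinant sequences agree \ldots\ must give the same determinants''---is circular, since equality of those determinant sequences is precisely the claim. Your second alternative---match $G(r,s)$ and $T_4(r,s)$ only modulo the ideal of order-$\geq n$ monomials---is not what closes the argument either, and would reintroduce an $n$-dependence you want to avoid. What the paper actually does is establish an \emph{exact} equality of rational generating functions: after the confluent limit one lands on a determinant with entries $\left.\left(\frac{1-i}{1-r-s-rs}+\frac{i}{1-rs}\right)\right|_{r^{i-1}s^{j-1}}$ (via a specific M\"obius substitution $r\mapsto \frac{1+i}{\sqrt{2}}\,\frac{r}{1-ir}$, $s\mapsto \frac{1-i}{\sqrt{2}}\,\frac{s}{1+is}$, which is triangular on coefficients and hence determinant-preserving), and then the elementary rational identity
\[
(1+ir)(1-s)\left(\frac{1-i}{1-r-s-rs}+\frac{i}{1-rs}\right)=(1-r)(1-is)\left(\frac{1}{1-rs}+\frac{2r}{(1-r)(1-r-s-rs)}\right)
\]
converts this into $T_4(r,s)$ on the nose. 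The linear prefactors $(1\pm ir)$, $(1\pm is)$, $(1-r)$, $(1-s)$ are each absorbed by adding to a row (or column) a multiple of the preceding one, so they do not change the determinant. There is no ``up to high order'' slack needed.
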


The proof goes as follows. From Theorem~\ref{An6V}, we may get $Z^{20V}(n)$ from $Z^{6V}_{\left[1,\sqrt{2},1\right]}(n)$ whose expression may itself be obtained from the
general Izergin-Korepin determinant expression \eqref{eq:IK}. Here however, we need to take as spectral parameters the specific values given by \eqref{eq:6Vweightchoice} and,
for such homogeneous values, the expression \eqref{eq:IK} cannot be used as such as both the determinant in the expression and the denominator of its prefactor vanish
identically, resulting in an indeterminate limit. Some manipulations on the Izergin-Korepin determinant are therefore required before letting $z_i$ and $w_j$ tend to their homogeneous limiting values
$z$ and $w$. 
Remarkably, the result of these manipulations is a new expression for $Z^{6V}_{\left[1,\sqrt{2},1\right]}(n)$
which resembles the Gessel-Viennot determinant encountered in Theorem \ref{thm:T4} for the expression of $T_4({\mathcal A}_n)$. The identity \eqref{eq:Z20T4} is
then proved by simple rearrangements of the determinant.

In the limit $z_i\to z$ and $w_j\to w$ for the weights \eqref{eq:6Vweights}, the expression \eqref{eq:IK} may be rewritten as:
\begin{equation}
\begin{split}
Z^{6V}=& (-1)^{\frac{n(n-1)}{2}} \left((q^2-q^{-2})\sqrt{z\, w}\right)^n \left((z-w)(q^{-2}z-q^2w)\right)^{n^2} \frac{q^{2n}}{(1-q^4)^n w^n}\\
 & \times
\det\limits_{1\leq i,j\leq n}\left(\left.\left(\frac{1}{(z+r)-(w+s)}-\frac{1}{(z+r)-q^4(w+s)}\right)\right\vert_{r^{i-1}s^{j-1}}\right)\ .\\
\end{split}
\label{eq:IKlimit}
\end{equation}
The passage from \eqref{eq:IK} to \eqref{eq:IKlimit} is explained in \cite{BDFPZ1} and we reproduce the various steps of the computation in Appendix \ref{appendixA}.

In the particular case where $q$, $z$ and $w$ take the values \eqref{eq:6Vweightchoice}, this leads immediately to
\begin{equation*}
\begin{split}
Z^{6V}_{\left[1,\sqrt{2},1\right]}(n)& =(-1)^{\frac{n(n-1)}{2}}\frac{\sqrt{2}^{n^2}}{\left(\frac{1+{\rm i}}{\sqrt{2}}\right)^n}
\det\limits_{1\leq i,j\leq n}\left(\left.\left(\frac{1}{1+r-s}-\frac{1}{1+{\rm i}+r-{\rm i}\, s}\right)\right\vert_{r^{i-1}s^{j-1}}\right)\\
& =\frac{\left({\rm i}\, \sqrt{2}\right)^{n(n-1)}}{\left(\frac{1+{\rm i}}{2}\right)^n}
\det\limits_{1\leq i,j\leq n}\left(\left.\left(\frac{1}{1+r-s}-\frac{1}{1+{\rm i}+r-{\rm i}\, s}\right)\right\vert_{r^{i-1}s^{j-1}}\right)\\
&= \det\limits_{1\leq i,j\leq n}\left(\left.\left(\frac{1-{\rm i}}{1+{\rm i}\sqrt{2}(r-s)}-\frac{1-{\rm i}}{1+{\rm i}+{\rm i}\sqrt{2}(r-{\rm i}\, s)}\right)\right\vert_{r^{i-1}s^{j-1}}\right)\\
\end{split}
\end{equation*}
expressing $Z^{6V}_{\left[1,\sqrt{2},1\right]}(n)$ as the determinant of the finite truncation of an infinite matrix with the generating function $f(r,s)$ 
explicited above.  
To go from the second to the third line, we used the identity $f(\alpha r,\beta s)\vert_{r^{i-1} s^{j-1}}=\alpha^{i-1}\beta^{j-1}f(r,s)\vert_{r^{i-1} s^{l-1}}$ 
with $\alpha=\beta={\rm i}\sqrt{2}$ and $\prod_{i,j=0}^n \alpha^{i-1}\beta^{j-1}=(\alpha\beta)^{\frac{n(n-1)}{2}}$ to insert the numerator of the prefactor inside the determinant.
As for the denominator, we also transferred it inside the determinant via the trivial identity $1/((1+{\rm i})/2)=1-{\rm i}$.
Performing the transformation 
\begin{equation}
r\to \frac{1+{\rm i}}{\sqrt{2}}\ \frac{r}{1-{\rm i}\, r}\ , \qquad s\to \frac{1-{\rm i}}{\sqrt{2}}\ \frac{s}{1+{\rm i}\, s}
\label{eq:transf}
\end{equation}
in the above infinite matrix generating function leaves the determinant unchanged.
Indeed, this amounts to first multiply $r$ by $\alpha=\frac{1+{\rm i}}{\sqrt{2}}$ and $s$ by $\beta=\frac{1-{\rm i}}{\sqrt{2}}$ changing the determinant by an overall 
multiplicative factor 
$\left(\frac{1+{\rm i}}{\sqrt{2}}\times \frac{1-{\rm i}}{\sqrt{2}}\right)^{\frac{n(n-1)}{2}}=1$ and to then to change $r\to r/(1-{\rm i}\, r)$ and $s\to s/(1+{\rm i}\, s)$.
Using 
\begin{equation*}
f\left.\left(\frac{r}{1-\gamma r},s\right)\right\vert_{r^k s^l}=f(r,s)\vert_{r^k s^l}+\sum_{m<k} \gamma^{k-m} {k-1\choose k-m} f(r,s)\vert_{r^m s^l}
\end{equation*}
the change $r\to r/(1-{\rm i}\, r)$  amounts to add to each row of the determinant a linear combination of the previous rows
while the change $s\to s/(1+{\rm i}\, s)$ amounts to add to each column a linear combination of the previous ones.
These operations leave the determinant unchanged. 
Applying the above substitution \eqref{eq:transf}, we get the alternative expression 
\begin{equation*}
\begin{split}
Z^{6V}_{\left[1,\sqrt{2},1\right]}(n)
&= \det\limits_{1\leq i,j\leq n}\left(\left.(1-{\rm i}\, r)(1+{\rm i}\, s)\left(\frac{1-{\rm i}}{1-r-s-r\, s}+\frac{{\rm i}}{1-r\, s}\right)\right\vert_{r^{i-1}s^{j-1}}\right)\\\
&= \det\limits_{1\leq i,j\leq n}\left(\left.\left(\frac{1-{\rm i}}{1-r-s-r\, s}+\frac{{\rm i}}{1-r\, s}\right)\right\vert_{r^{i-1}s^{j-1}}\right)\ .\\
\end{split}
\end{equation*}
Here again the prefactor $(1-{\rm i}\, r)(1+{\rm i}\, s)$ was removed without changing the determinant as the matrices
with and without this prefactor are obtained from one another by subtracting from each row (for the $r$-dependent factor) or respectively adding to each column 
(for the $s$-dependent factor) ${\rm i}$ times the preceding one.

Using the identity
\begin{equation}
(1+{\rm i}\ r)(1-s) \left(\frac{1-{\rm i}}{1-r-s-r\, s}+\frac{{\rm i}}{1-r\, s}\right)=
(1-r)(1-{\rm i}\, s) \left(\frac{1}{1-r\, s}+\frac{2r}{(1-r)(1-r-s-r\, s)}\right)\ ,
\label{eq:remarkableidentity}
\end{equation}
we may play once more the same trick and get the alternative expression
\begin{equation*}
\begin{split}
Z^{6V}_{\left[1,\sqrt{2},1\right]}(n)
&= \det\limits_{1\leq i,j\leq n}\left(\left.(1+{\rm i}\ r)(1-s) \left(\frac{1-{\rm i}}{1-r-s-r\, s}+\frac{{\rm i}}{1-r\, s}\right)\right\vert_{r^{i-1}s^{j-1}}\right)\\
&= \det\limits_{1\leq i,j\leq n}\left(\left.(1-r)(1-{\rm i}\, s) \left(\frac{1}{1-r\, s}+\frac{2r}{(1-r)(1-r-s-r\, s)}\right)\right\vert_{r^{i-1}s^{j-1}}\right)\\
&= \det\limits_{1\leq i,j\leq n}\left(\left.\left(\frac{1}{1-r\, s}+\frac{2r}{(1-r)(1-r-s-r\, s)}\right)\right\vert_{r^{i-1}s^{j-1}}\right)\ .\\
\end{split}
\end{equation*} 
This latter expression is nothing but that \eqref{eq:detT4} for $T_4(\mathcal{A}_n)$ up to a trivial shift by $1$ of the indices $i$ and $j$.
This proves the theorem.

\subsection{Refined equivalence}
We now wish to refine the above result and get the interpretation of $\hat{Z}^{20V_{BC1}}(\tau)$ and $\hat{Z}^{20V_{BC2}}(\tau)$ in the quarter-turn symmetric tiling language.
We have the following:
\begin{thm}
\label{thmref20VT4}
The refined partition functions for the 20V-DWBC1,2 model on an $n\times n$ grid are equal to the refined partition functions for
type 1 and 2 quarter-turn symmetric domino tilings of the domain $\mathcal{A}_n$, namely
\begin{equation*}
\hat{Z}^{20V_{BC1}}(\tau)=T_{4}^{(1)}(\mathcal{A}_n;\tau)\ , \qquad \hat{Z}^{20V_{BC2}}(\tau)=T_{4}^{(2)}(\mathcal{A}_n;\tau)\ .
\end{equation*} 
\end{thm}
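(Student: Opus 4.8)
The plan is to refine the proof of Theorem~\ref{thm:Z20T4} by keeping a single spectral parameter free throughout the chain of manipulations leading from the Izergin--Korepin determinant to the Gessel--Viennot determinant. Using Theorem~\ref{20to6}, the DWBC2 statement $\hat{Z}^{20V_{BC2}}(\tau)=T_{4}^{(2)}(\mathcal{A}_n;\tau)$ is equivalent, upon setting $\sigma=\tfrac{1+\tau}{2}$, to the identity $\hat{Z}^{6V}_{\left[1,\sqrt 2,1\right]}(\sigma)=T_{4}^{(2)}(\mathcal{A}_n;2\sigma-1)$ between the once-refined 6V partition function and the type~2 tiling polynomial. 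So I would first establish this 6V/tiling identity and then transport it back to the 20V model; the DWBC1 case will be deduced from it purely algebraically via Corollary~\ref{cor20to6}.

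First I would write the refined 6V partition function as a \emph{once-refined} Izergin--Korepin determinant, obtained by letting $w_1,\dots,w_{n-1}\to w$ and $z_1,\dots,z_n\to z$ while keeping $w_n$ free (this $w_n$ is precisely the spectral parameter that tracks the hitting point of the uppermost path, as in Section~\ref{sec:refinedZ20}). Since the prefactor $\prod_{i<j}(z_i-z_j)(w_j-w_i)$ in \eqref{eq:IK} only acquires vanishing factors among the $z_i$'s and among $w_1,\dots,w_{n-1}$, the confluent-limit computation of Appendix~\ref{appendixA} and \cite{BDFPZ1} applies verbatim to all but the last column. The outcome is a determinant whose first $n-1$ columns are identical to those appearing in \eqref{eq:IKlimit}, while the last column is the non-degenerate entry built from the free parameter $w_n$, the leftover $w_n$-dependent prefactors being exactly those of \eqref{eq:pf6}.

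Next I would run the substitution chain of the unrefined proof: the rescalings, the M\"obius substitutions \eqref{eq:transf}, and the subsequent row/column operations that leave the determinant invariant. These act uniformly on the rows (through $r$) and on the first $n-1$ columns (through $s$), so those columns collapse to the standard tiling generating function $\frac{1}{1-r\,s}+\frac{2r}{(1-r)(1-r-s-r\,s)}$ exactly as in Theorem~\ref{thm:Z20T4}. For the last column I would track the image of the $w_n$-dependent entry under the same transformations and verify a once-refined analogue of the key identity \eqref{eq:remarkableidentity}, in which the extra $w_n$-prefactors recombine with the entry so that, after substitution, the last column becomes $\frac{1}{1-r\,s}+\frac{(1+\tau)r}{(1-\tau\,r)(1-r-s-r\,s)}$ with $\tau=2\sigma-1$. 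Comparing with the first (column-by-column) form of Theorem~\ref{refinT4} then yields $\hat{Z}^{6V}_{\left[1,\sqrt 2,1\right]}(\tfrac{1+\tau}{2})=T_{4}^{(2)}(\mathcal{A}_n;\tau)$, hence $\hat{Z}^{20V_{BC2}}(\tau)=T_{4}^{(2)}(\mathcal{A}_n;\tau)$.

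Finally, for DWBC1 I would avoid repeating the whole computation and instead invoke Corollary~\ref{cor20to6}, which expresses $\hat{Z}^{20V_{BC1}}(\tau)$ as $\frac{2\tau}{1+\tau}\hat{Z}^{6V}_{\left[1,\sqrt 2,1\right]}(\tfrac{1+\tau}{2})+\frac{1-\tau}{1+\tau}\hat{Z}^{6V}_{\left[1,\sqrt 2,1\right]}(0)$. Linearity of the determinant in its last column, together with the fact that the type~1 last column reduces to $\frac{1}{1-r\,s}$ at $\tau=0$ and that $T_{4}^{(1)}(\mathcal{A}_n;0)=T_{4}(\mathcal{A}_{n-1})$, gives the matching tiling relation $T_{4}^{(1)}(\mathcal{A}_n;\tau)=\frac{2\tau}{1+\tau}T_{4}^{(2)}(\mathcal{A}_n;\tau)+\frac{1-\tau}{1+\tau}T_{4}(\mathcal{A}_{n-1})$. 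The two relations coincide once $\hat{Z}^{6V}_{\left[1,\sqrt 2,1\right]}(0)=T_{4}(\mathcal{A}_{n-1})$, which is itself just the already-proven DWBC2 identity evaluated at $\tau=-1$, since the type~2 last column degenerates to $\frac{1}{1-r\,s}$ there. Combined with the DWBC2 case this proves $\hat{Z}^{20V_{BC1}}(\tau)=T_{4}^{(1)}(\mathcal{A}_n;\tau)$. I expect the main obstacle to be precisely the once-refined confluent limit and the verification of the refined version of \eqref{eq:remarkableidentity}: keeping one column non-degenerate while all others become Taylor coefficients is where the bookkeeping of the $w_n$-prefactors and the exact matching of $\sigma=\frac{1+\tau}{2}$ must be controlled.
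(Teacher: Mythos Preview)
Your overall strategy matches the paper's: keep $w_n$ free in the Izergin--Korepin determinant, take the confluent limit on the remaining columns (this is exactly the content of \eqref{eq:IKlimitbis} and Appendix~\ref{appendixB}), run the same substitution chain \eqref{eq:transf} as in the unrefined proof, and then deduce the DWBC1 case from the DWBC2 case via Corollary~\ref{cor20to6} and linearity of the determinant in its last column. That last step is carried out in the paper precisely as you describe.

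There is, however, a genuine gap in your treatment of the last column for DWBC2. You anticipate that a ``once-refined analogue of \eqref{eq:remarkableidentity}'' will recombine the $w_n$-prefactors so that the last column lands directly on $\frac{1}{1-r\,s}+\frac{(1+\tau)r}{(1-\tau\,r)(1-r-s-r\,s)}$. This does not happen. After the substitutions and the extraction of \eqref{eq:pf6}, one is left with an unwanted scalar prefactor $\bigl(\frac{\sigma}{\sigma-1}\bigr)^{n}$ in front of the whole determinant, not just its last column, and the last-column entry itself is \emph{not} of tiling form. The paper handles this in two extra steps that your plan is missing: (i) it writes the matrix as $Q_n+P_n$ where $P_n$ differs from $\mathrm i\,\mathbb I_n$ only in its last column, so that $\det P_n$ absorbs the stray prefactor, and then works with $\mathrm i\,\mathbb I_n+\mathrm i\,Q_nP_n^{-1}$; (ii) after applying \eqref{eq:remarkableidentity} to this new matrix, the resulting last column $K_{i,n}$ is still not equal to the tiling column $L_{i,n}$, but rather to $\frac{\tau+\mathrm i}{1+\mathrm i}L_{i,n}+\frac{\tau-1}{1+\mathrm i}\sum_{j<n}L_{i,j}$ (Lemma~\eqref{LKlemma}), and only this column operation, together with the leftover scalar $\frac{1+\mathrm i}{\tau+\mathrm i}$, yields $T_4^{(2)}(\mathcal A_n;\tau)$. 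So the ``refined identity'' you are hoping for is in fact a column-combination identity involving all previous columns, not a pointwise rewriting of the last column alone; without the $P_n^{-1}$ step and this lemma the bookkeeping does not close.
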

The remainder of this section is devoted to the proof of this theorem.
From equation, \eqref{eq:20to6}, we may relate $\hat{Z}^{20V_{BC1}}(\tau)$ and $\hat{Z}^{20V_{BC2}}(\tau)$ to their analogue 
$\hat{Z}^{6V}_{\left[1,\sqrt{2},1\right]}(\sigma)$ \eqref{ref6V} for the 6V model,  with $\sigma=\frac{1+\tau}{2}$.
As explained in Section \ref{sec:refinedZ20}, this latter partition function may be obtained by letting $z_i$ and $w_j$ tend to their special values
$z$ and $w$ of \eqref{eq:6Vweightchoice}  except for the spectral parameter $w_n$ attached to the last column which tends instead to the value $w\, u$ for some parameter $u$.
We therefore have to evaluate the expression \eqref{eq:IK} of the Izergin-Korepin determinant in this limit. This can be done along the same lines as
in the previous section:
in the limit $z_i\to z$, $i=1,\dots, n$, $w_j\to w$, $j=1,\dots, n-1$  and $w_n\to w\, u$, the expression \eqref{eq:IK} may be rewritten as:
\begin{equation}
\begin{split}
Z^{6V}=& (-1)^{\frac{n(n-1)}{2}} \left((q^2-q^{-2})\sqrt{z\, w}\right)^{n-1} \left((z-w)(q^{-2}z-q^2w)\right)^{n(n-1)} \frac{q^{2(n-1)}}{(1-q^4)^{n-1} w^{n-1}}\\
& \times (q^2-q^{-2})\sqrt{z\, w\, u}\, \left((z-w\, u)(q^{-2}z-q^2w\, u)\right)^n\ \frac{q^2}{(1-q^4) w u}\ \frac{1}{(w\, u-w)^{n-1}}\\
 & \times
\det\limits_{1\leq i,j\leq n}\left(
\begin{matrix}
\left.\left(\frac{1}{(z+r)-(w+s)}-\frac{1}{(z+r)-q^4(w+s)}\right)\right\vert_{r^{i-1}s^{j-1}}& j\leq n-1\\
\left.\left(\frac{1}{(z+r)-w\, u}-\frac{1}{(z+r)-q^4\, w\, u}\right)\right\vert_{r^{i-1}}& j= n\\
\end{matrix}
\right)\ .\\
\end{split}
\label{eq:IKlimitbis}
\end{equation}
A derivation of this expression in given in Appendix \ref{appendixB}. For the specific values \eqref{eq:6Vweightchoice}, this yields
\begin{equation*}
\begin{split}
Z^{6V}_{\left[1,\sqrt{2},1\right];
\left[\frac{(u+{\rm i})(1-{\rm i})}{2},\sqrt{2}\frac{1+u}{2},\sqrt{u}\right]}(n)=& (-1)^{\frac{n(n-1)}{2}} \frac{\left(\sqrt{2}\right)^{n^2} }{\left(\frac{1+{\rm i}}{\sqrt{2}}\right)^n} \frac{1}{\sqrt{u}}\, \left(\frac{1+u}{2}\ 
\frac{(u+{\rm i})(1-{\rm i})}{2}\right)^n \left(\frac{1+{\rm i}}{1-u}\right)^{n-1}\\
 & \times
\det\limits_{1\leq i,j\leq n}\left(
\begin{matrix}
\left.\left(\frac{1}{1+r-s}-\frac{1}{1+{\rm i} +r-{\rm i} s}\right)\right\vert_{r^{i-1}s^{j-1}}& j\leq n-1\\
\left.\left(\frac{1}{\frac{1-{\rm i}\, u}{1-{\rm i}}+r}-\frac{1}{\frac{1+u}{1-{\rm i}}+r}\right)\right\vert_{r^{i-1}}& j= n\\
\end{matrix}
\right)\ .\\
=&\frac{1}{\sqrt{u}}\, ({\rm i}\sqrt{2})^{n-1}\left(\frac{1+u}{2}\ \frac{(u+{\rm i})(1-{\rm i})}{2}\right)^n \left(\frac{1+{\rm i}}{1-u}\right)^{n-1}\\
 & \times
\det\limits_{1\leq i,j\leq n}\left(
\begin{matrix}
\left.\left(\frac{1-{\rm i}}{1+{\rm i}\sqrt{2}(r-s)}-\frac{1-{\rm i}}{1+{\rm i} +{\rm i}\sqrt{2}(r-{\rm i} s)}\right)\right\vert_{r^{i-1}s^{j-1}}& j\leq n-1\\
\left.\left(\frac{1-{\rm i}}{\frac{1-{\rm i}\, u}{1-{\rm i}}+{\rm i}\sqrt{2}r}-\frac{1-{\rm i}}{\frac{1+u}{1-{\rm i}}+{\rm i}\sqrt{2} r}\right)\right\vert_{r^{i-1}}& j= n\\
\end{matrix}
\right)\ .\\
\end{split}
\end{equation*}
Here the notation $Z^{6V}_{\left[1,\sqrt{2},1\right];
\left[\frac{(u+{\rm i})(1-{\rm i})}{2},\sqrt{2}\frac{1+u}{2},\sqrt{u}\right]}(n)$ indicates that the weights in the last column are different from those in the other columns,
with the indicated $u$-dependent values. Again we perform the substitution \eqref{eq:transf}. Note that, as opposed to what we had before, the change in $s$ does not affect the last column $j=n$.
The effect of the substitution on the determinant is compensated by multiplying simultaneously by an overall factor $\left(\frac{1-{\rm i}}{\sqrt{2}}\right)^{n-1}$. This leads to
\begin{equation*}
\begin{split}
& Z^{6V}_{\left[1,\sqrt{2},1\right];
\left[\frac{(u+{\rm i})(1-{\rm i})}{2},\sqrt{2}\frac{1+u}{2},\sqrt{u}\right]}(n)=\frac{1}{\sqrt{u}}\, ({\rm i}\sqrt{2})^{n-1}\left(\frac{1+u}{2}\ \frac{(u+{\rm i})(1-{\rm i})}{2}\right)^n \left(\frac{1+{\rm i}}{1-u}\right)^{n-1}\left(\frac{1-{\rm i}}{\sqrt{2}}\right)^{n-1}\\
 & \qquad \qquad\qquad \qquad \qquad \qquad  \times
\det\limits_{1\leq i,j\leq n}\left(
\begin{matrix}
\left.(1-{\rm i}\, r)(1+{\rm i}\, s)\left(\frac{1-{\rm i}}{1-r-s-r\, s}+\frac{{\rm i}}{1-r\, s}\right)\right\vert_{r^{i-1}s^{j-1}}& j\leq n-1\\
\left.(1-{\rm i}\, r) \left(\frac{-2{\rm i}}{1-{\rm i}\, u+({\rm i}-u)r}-\frac{-2{\rm i}}{1+u+{\rm i}(1-u)r}\right)\right\vert_{r^{i-1}}& j= n\\
\end{matrix}
\right)\\
& =\frac{1}{\sqrt{u}}\left(\frac{(1+u)(u+{\rm i})(1+{\rm i})}{2(1-u)}\right)^{n}
\det\limits_{1\leq i,j\leq n}\left(
\begin{matrix}
\left.(1-{\rm i}\, r)(1+{\rm i}\, s)\left(\frac{1-{\rm i}}{1-r-s-r\, s}+\frac{{\rm i}}{1-r\, s}\right)\right\vert_{r^{i-1}s^{j-1}}& j\leq n-1\\
\left.(1-{\rm i}\, r) \left(\frac{u-1}{1-{\rm i}\, u+({\rm i}-u)r}-\frac{u-1}{1+u+{\rm i}(1-u)r}\right)\right\vert_{r^{i-1}}& j= n\\
\end{matrix}
\right)\\
&=\frac{1}{\sqrt{u}}\left(\frac{(1+u)(u+{\rm i})(1+{\rm i})}{2(1-u)}\right)^{n}
\det\limits_{1\leq i,j\leq n}\left(
\begin{matrix}
\left.\left(\frac{1-{\rm i}}{1-r-s-r\, s}+\frac{{\rm i}}{1-r\, s}\right)\right\vert_{r^{i-1}s^{j-1}}& j\leq n-1\\
\left.\left(\frac{u-1}{1-{\rm i}\, u+({\rm i}-u)r}-\frac{u-1}{1+u+{\rm i}(1-u)r}\right)\right\vert_{r^{i-1}}& j= n\\
\end{matrix}
\right)
\end{split}
\end{equation*}
where we again removed the factors $(1-{\rm i}\, r)$ and $(1+{\rm i}\, s)$ without changing the determinant.

We now recall from \eqref{eq:pf6} the expression 
 \begin{equation*}
 \begin{split}
Z^{6V}_{\left[1,\sqrt{2},1\right];
\left[\frac{(u+{\rm i})(1-{\rm i})}{2},\sqrt{2}\frac{1+u}{2},\sqrt{u}\right]}(n)& =\sum_{\ell=1}^n Z^{6V}_{\left[1,\sqrt{2},1\right];\ell}  \left(\frac{1+u}{2}\right)^{\ell-1}\ \sqrt{u}\ \left(\frac{(u+{\rm i})(1-{\rm i})}{2}\right)^{n-\ell}\\
&=  \sqrt{u}\ \left(\frac{(u+{\rm i})(1-{\rm i})}{2}\right)^{n-1} \hat{Z}^{6V}_{\left[1,\sqrt{2},1\right]}(\sigma)
\end{split}
\end{equation*}
where $\sigma=\frac{1+u}{({\rm i}+u)(1-{\rm i})}$, or equivalently, $u=\frac{1-(1+{\rm i})\sigma}{(1-{\rm i})\sigma-1}$. Comparing the two expressions above leads to
\begin{equation*}
\begin{split}
\hat{Z}^{6V}_{\left[1,\sqrt{2},1\right]}(\sigma)=&\frac{1}{u}\left(\frac{(1+u)(u+{\rm i})(1+{\rm i})}{2(1-u)}\right)\left({\rm i}\ \frac{1+u}{1-u}\right)^{n-1}\\
 & \times
\det\limits_{1\leq i,j\leq n}\left(
\begin{matrix}
\left.\left(\frac{1-{\rm i}}{1-r-s-r\, s}+\frac{{\rm i}}{1-r\, s}\right)\right\vert_{r^{i-1}s^{j-1}}& j\leq n-1\\
\left.\left(\frac{u-1}{1-{\rm i}\, u+({\rm i}-u)r}-\frac{u-1}{1+u+{\rm i}(1-u)r}\right)\right\vert_{r^{i-1}}& j= n\\
\end{matrix}
\right)\\
=&\frac{\sigma-1}{\sigma(1-{\rm i})+{\rm i}}\left(\frac{\sigma}{\sigma-1}\right)^n
\det\limits_{1\leq i,j\leq n}\left(
\begin{matrix}
\left.\left(\frac{1-{\rm i}}{1-r-s-r\, s}+\frac{{\rm i}}{1-r\, s}\right)\right\vert_{r^{i-1}s^{j-1}}& j\leq n-1\\
\left.\left(\frac{1-{\rm i}}{1+(1-2\sigma)r}+\frac{{\rm i}}{\sigma+(1-\sigma)r}\right)\right\vert_{r^{i-1}}& j= n\\
\end{matrix}
\right)\ .
\end{split}
\end{equation*}
Setting $\sigma=\frac{1+\tau}{2}$ and using \eqref{eq:20to6}, we deduce alternatively
\begin{equation*}
\begin{split}
\hat{Z}^{20V_{BC2}}(\tau)=&\frac{(1+{\rm i})(\tau-1)}{2(\tau+{\rm i})}\left(\frac{\tau+1}{\tau-1}\right)^n
\det\limits_{1\leq i,j\leq n}\left(
\begin{matrix}
\left.\left(\frac{1-{\rm i}}{1-r-s-r\, s}+\frac{{\rm i}}{1-r\, s}\right)\right\vert_{r^{i-1}s^{j-1}}& j\leq n-1\\
\left.\left(\frac{1-{\rm i}}{1-\tau\, r}+\frac{2{\rm i}}{(\tau+1)-(\tau-1)r}\right)\right\vert_{r^{i-1}}& j= n\\
\end{matrix}
\right)\\
&=\frac{1+{\rm i}}{\tau+{\rm i}}\left(\frac{\tau+1}{\tau-1}\right)^{n-1}\det(Q_n+P_n)\\
\end{split}
\end{equation*}
where
\begin{equation*}
(P_n)_{i,j}=
\left\{ 
\begin{matrix}
\left.\left(\frac{{\rm i}}{1-r\, s}\right)\right\vert_{r^{i-1}s^{j-1}}={\rm i}\, \delta_{i,j}& j\leq n-1\\
\left.\left(\frac{{\rm i}}{1-\frac{\tau-1}{\tau+1}r}\right)\right\vert_{r^{i-1}}={\rm i}\, \left(\frac{\tau-1}{\tau+1}\right)^{i-1}& j=n\\
\end{matrix}
\right.
\qquad 
(Q_n)_{i,j}=
\left\{ 
\begin{matrix}
\left.\left(\frac{1-{\rm i}}{1-r-s-r\, s}\right)\right\vert_{r^{i-1}s^{j-1}}& j\leq n-1\\
\left.\left(\frac{\frac{1-{\rm i}}{2}(\tau+1)}{1-\tau\, r}\right)\right\vert_{r^{i-1}}& j=n\\
\end{matrix}
\right.\ .
\end{equation*}
Now the matrix $P_n$ differs from the matrix ${\rm i}\, {\mathbb I}_n$ only in its last column and its determinant
is therefore easily obtained as
\begin{equation*}
\det(P_n)={\rm i}^{n-1}(P_n)_{n,n}={\rm i}^n\left(\frac{\tau-1}{\tau+1}\right)^{n-1}
\end{equation*}
and we deduce
\begin{equation*}
\hat{Z}^{20V_{BC2}}(\tau)=\frac{1+{\rm i}}{\tau+{\rm i}}\,\det({\rm i}\, {\mathbb I}_n+{\rm i}\ Q_nP_n^{-1})\ .
\end{equation*}
Using
\begin{equation*}
{\rm i}\, (P_n^{-1})_{i,j}=
\left\{ 
\begin{matrix}
\delta_{i,j}& j\leq n-1\\
-\left(\frac{\tau+1}{\tau-1}\right)^{n-i}& i\leq n-1, j=n\\
\left(\frac{\tau+1}{\tau-1}\right)^{n-1}& i=n, j=n\\
\end{matrix}
\right.\ ,
\end{equation*}
we get ${\rm i}\, (Q_nP_n^{-1})_{i,j}=(Q_n)_{i,j}$ for $j<n$, while
\begin{equation*}
\begin{split}
{\rm i}\, & (Q_nP_n^{-1})_{i,n}=
\left.\left(\frac{\frac{1-{\rm i}}{2}(\tau+1)}{1-\tau\, r}\right)\right\vert_{r^{i-1}}\left(\frac{\tau+1}{\tau-1}\right)^{n-1}
-\sum_{k=1}^{n-1}\left.\left(\frac{1-{\rm i}}{1-r-s-r\, s}\right)\right\vert_{r^{i-1}s^{k-1}}\left(\frac{\tau+1}{\tau-1}\right)^{n-k}
\\
&= \left(\frac{\tau+1}{\tau-1}\right)^{n-1}\underbrace{\left(\frac{\frac{1-{\rm i}}{2}(\tau+1)}{1-\tau\, r}-\frac{1-{\rm i}}{1-r-\frac{\tau-1}{\tau+1}-r\, \frac{\tau-1}{\tau+1}}\right)}_{=0}\Bigg\vert_{r^{i-1}}
+\sum_{k=n}^\infty \left(\frac{\tau+1}{\tau-1}\right)^{n-k} \left.\left(\frac{1-{\rm i}}{1-r-s-r\, s}\right)\right\vert_{r^{i-1}s^{k-1}}
\\
&=(1-{\rm i})\left.\left(\sum_{k=n}^\infty \left(\frac{\tau+1}{\tau-1}\right)^{n-k} \frac{(1+r)^{k-1}}{(1-r)^k}\right)\right\vert_{r^{i-1}}
=\frac{1-{\rm i}}{2}\left(\frac{1+r}{1-r}\right)^{n-1}\left.\left(\frac{1+\tau}{1-\tau\, r}\right)\right\vert_{r^{i-1}}\ .
\end{split}
\end{equation*}
We end up with the expression
\begin{equation*}
\hat{Z}^{20V_{BC2}}(\tau)
=\frac{1+{\rm i}}{\tau+{\rm i}}\det\limits_{1\leq i,j\leq n}\left(
\left.\left(\frac{{\rm i}}{1-r\, s}+\frac{1-{\rm i}}{1-r-s-r\, s}+s^{n-1}\left(\frac{1+r}{1-r}\right)^{n-1}\left\{\frac{1-{\rm i}}{2}\frac{1+\tau}{1-\tau\, r}-\frac{1-{\rm i}}{1-r}\right\}\right)\right\vert_{r^{i-1}s^{j-1}}\right)
\end{equation*}
where the last term corrects the wrong value $\left.\frac{1-{\rm i}}{1-r-s-r\, s}\right\vert_{r^{i-1}s^{n-1}}=\left(\frac{1+r}{1-r}\right)^{n-1}\left.\frac{1-{\rm i}}{1-r}\right\vert_{r^{i-1}}$
coming from the second term to the correct value above. As before, we may multiply the function inside the determinant by $\frac{(1+{\rm i}\ r)(1-s)}{(1-r)(1+{\rm i}\, s)}$ without changing the 
value of the determinant. 
Using again the identity \eqref{eq:remarkableidentity}, we obtain
\begin{equation}
\begin{split}
\hat{Z}^{20V_{BC2}}(\tau)&=\frac{1+{\rm i}}{\tau+{\rm i}}\det\limits_{1\leq i,j\leq n}\left(
\left(\frac{1}{1-r\, s}+\frac{2r}{(1-r)(1-r-s-r\, s)}\right.\right.\\
&\left.\left.\left.\qquad \qquad+s^{n-1}\left(\frac{1+r}{1-r}\right)^{n-1}\frac{(1+{\rm i}\ r)(1-\cancel{s})}{(1-r)(1+{\rm i}\,\cancel{s})}\left(\frac{1-{\rm i}}{2}\frac{1+\tau}{1-\tau\, r}-\frac{1-{\rm i}}{1-r}\right)\right)\right\vert_{r^{i-1}s^{j-1}}\right)\\
&=\frac{1+{\rm i}}{\tau+{\rm i}}\det\limits_{1\leq i,j\leq n}\left(K_{i,j}\right)\\
K_{i,j}&:=\left.
\left(\frac{1}{1-r\, s}+\frac{2r}{(1-r)(1-r-s-r\, s)}+s^{n-1}\frac{(1+r)^{n-1}}{(1-r)^n}\frac{1+{\rm i}\, r}{1+{\rm i}}\left\{\frac{1+\tau}{1-\tau\, r}-\frac{2}{1-r}\right\}\right)\right\vert_{r^{i-1}s^{j-1}}\
\end{split}
\label{eq:penult}
\end{equation}
where the crossed out $\cancel{s}$ play no role and were thus removed.
In this form, the expression is now very close to that of Theorem~\ref{refinT4} for $T_{4,2}(\mathcal{A}_n;\tau)$, namely (with a trivial shift by $1$ of the indices)
\begin{equation}
\begin{split}
&T_{4,2}(\mathcal{A}_n;\tau)
=\det\limits_{1\leq i,j\leq n}\left(L_{i,j}\right)\\
& L_{i,j}:=\left.
\left(\frac{1}{1-r\, s}+\frac{2r}{(1-r)(1-r-s-r\, s)}+s^{n-1}\frac{(1+r)^{n-1}}{(1-r)^n}r \left\{\frac{(1+\tau)}{1-\tau\, r}-\frac{2}{1-r}\right\}\right)\right\vert_{r^{i-1}s^{j-1}}\ .\
\end{split}
\label{eq:ult}
\end{equation}
The identification of the two formulas follows from the following simple remark:
\begin{lemma}
 We have the identities
\begin{equation}
\begin{matrix}
& K_{i,j}=L_{i,j} \hfill &\qquad  j<n \\
& &\\
& K_{i,n}=\displaystyle{\frac{\tau+{\rm i}}{1+{\rm i}}L_{i,n}+\frac{\tau-1}{1+{\rm i}}\ \sum\limits_{j=1}^{n-1}L_{i,j}}& \qquad j=n\\
\end{matrix}
\label{LKlemma}
\end{equation}
\end{lemma}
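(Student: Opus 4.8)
The plan is to read both $K_{i,j}$ and $L_{i,j}$ as coefficient extractions $\left.(\cdots)\right\vert_{r^{i-1}s^{j-1}}$ from two generating functions that share a common part
\[
G(r,s):=\frac{1}{1-r\,s}+\frac{2r}{(1-r)(1-r-s-r\,s)}
\]
and differ only by a term carrying the overall factor $s^{n-1}$: $K$ adds $s^{n-1}\frac{1+{\rm i}\,r}{1+{\rm i}}\Phi(r)$ while $L$ adds $s^{n-1}\,r\,\Phi(r)$, where $\Phi(r):=\frac{(1+r)^{n-1}}{(1-r)^n}\bigl(\frac{1+\tau}{1-\tau\,r}-\frac{2}{1-r}\bigr)$. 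The first identity $K_{i,j}=L_{i,j}$ for $j<n$ is then immediate: the two extra terms are proportional to $s^{n-1}$, hence contribute nothing to the coefficient of $s^{j-1}$ once $j-1<n-1$, and the shared part $G$ contributes identically to both.

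For $j=n$ I would reduce the claimed relation to a generating-function identity in the single variable $r$. Expanding $\frac{1}{1-r-s-r\,s}$ as a geometric series in $s$ gives the explicit slices
\[
g_j(r):=\left.G(r,s)\right\vert_{s^{j}}=r^{j}+\frac{2r(1+r)^{j}}{(1-r)^{j+2}},
\]
so that the three quantities entering \eqref{LKlemma} read $\left.G\right\vert_{r^{i-1}s^{n-1}}=\left.g_{n-1}\right\vert_{r^{i-1}}$, $\left.r\,\Phi\right\vert_{r^{i-1}}$, and $\sum_{j=1}^{n-1}L_{i,j}=\left.\sum_{j=0}^{n-2}g_j\right\vert_{r^{i-1}}$. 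Clearing the factor $(1+{\rm i})$ and cancelling the common $g_{n-1}$ contribution (using $\frac{\tau+{\rm i}}{1+{\rm i}}=1+\frac{\tau-1}{1+{\rm i}}$ and $(1+{\rm i}\,r)\Phi=\Phi+{\rm i}\,r\Phi$), the target collapses to
\[
\left.\Phi(r)\right\vert_{r^{i-1}}=\tau\left.r\,\Phi(r)\right\vert_{r^{i-1}}+(\tau-1)\left.\sum_{j=0}^{n-1}g_j(r)\right\vert_{r^{i-1}},\qquad i=1,\dots,n,
\]
that is, to the vanishing of $\left.\bigl(\Phi(r)(1-\tau\,r)-(\tau-1)\sum_{j=0}^{n-1}g_j(r)\bigr)\right\vert_{r^{i-1}}$ for every $i\le n$.

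I would then evaluate the two pieces in closed form. A one-line simplification gives $\Phi(r)(1-\tau\,r)=\frac{(1+r)^{n-1}}{(1-r)^n}\cdot\frac{(\tau-1)(1+r)}{1-r}=(\tau-1)\frac{(1+r)^n}{(1-r)^{n+1}}$, since $(1-\tau r)\bigl(\frac{1+\tau}{1-\tau r}-\frac{2}{1-r}\bigr)=\frac{(\tau-1)(1+r)}{1-r}$. Summing the finite geometric series yields $\sum_{j=0}^{n-1}g_j(r)=\frac{(1+r)^n}{(1-r)^{n+1}}-\frac{r^n}{1-r}$. Subtracting, the bracketed quantity is exactly $(\tau-1)\frac{r^n}{1-r}$.

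The key and only genuinely subtle point is that this is \emph{not} an exact identity of rational functions: the two sides differ by $(\tau-1)\frac{r^n}{1-r}=(\tau-1)(r^n+r^{n+1}+\cdots)$. The entire discrepancy lives in degrees $\ge n$, hence contributes nothing to the coefficients $r^0,\dots,r^{n-1}$ seen by the extractions $i=1,\dots,n$ — precisely the ``terms of order $\ge n$ are irrelevant for the truncation'' mechanism already invoked for Lemma~\ref{genqts}. This confirms the $j=n$ identity and completes the proof. The main obstacle to anticipate is resisting the temptation to establish a full rational-function identity: one must keep careful track of which coefficients the $n\times n$ truncation actually records and discard the tail $r^n/(1-r)$ only on that basis.
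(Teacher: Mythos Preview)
Your argument is correct and follows essentially the same route as the paper's own proof. Both reduce the $j=n$ case to a single-variable identity in $r$ via the explicit slices $g_j(r)=r^j+2r(1+r)^j/(1-r)^{j+2}$, evaluate the finite geometric sum, and observe that the residual discrepancy lives only in degrees $\ge n$ in $r$; the paper's cancelled $\cancel{r}$ step (``plays no role for $i\le n$'') is exactly your discarded tail $(\tau-1)\,r^n/(1-r)$.
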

\begin{proof}
The first statement for $j<n$ is by definition. For $j=n$, we use
\begin{equation*}
L_{i,j}=\left\{
\begin{matrix}
\left.\left(r^{j-1}+\frac{(1+r)^{j-1}}{(1-r)^j}\ \frac{2\, r}{1-r} \right)\right\vert_{r^{i-1}}& j\leq n-1\\
\left.\left(r^{n-1}+\frac{(1+r)^{n-1}}{(1-r)^n}\ \frac{(1+\tau)\, r}{1-\tau\, r}\right) \right\vert_{r^{i-1}}& j=n\\
\end{matrix}
\right.
\end{equation*}
so that 
\begin{equation*}
\begin{split}
\frac{\tau+{\rm i}}{1+{\rm i}}L_{i,n}+\frac{\tau-1}{1+{\rm i}}\ \sum_{j=1}^{n-1}L_{i,j}&=
\left.\left(\frac{\tau+{\rm i}}{1+{\rm i}}r^{n-1}+\frac{\tau-1}{1+{\rm i}}\frac{1-r^{n-1}}{1-r}\right.\right.
\\
&\left.\left.+\frac{\tau+{\rm i}}{1+{\rm i}}\frac{(1+r)^{n-1}}{(1-r)^n}\ \frac{(1+\tau)\, r}{1-\tau\, r}
-\frac{\tau-1}{1+{\rm i}} \frac{1-\left(\frac{1+r}{1-r}\right)^{n-1}}{1-r}\right) \right\vert_{r^{i-1}}\\
&=\left.\left(\left(\frac{\tau+{\rm i}}{1+{\rm i}}-\frac{\tau-1}{1+{\rm i}}\frac{1}{1-\cancel{r}}\right)r^{n-1}+
\left(\frac{\tau+{\rm i}}{1+{\rm i}}\frac{(1+\tau)\, r}{1-\tau\, r}+\frac{\tau-1}{1+{\rm i}}\right)\frac{(1+r)^{n-1}}{(1-r)^n} \right)\right\vert_{r^{i-1}}
\\
&=\left.\left(r^{n-1}+
\left(\frac{\tau+{\rm i}}{1+{\rm i}}\frac{(1+\tau)\, r}{1-\tau\, r}+\frac{\tau-1}{1+{\rm i}}\right)\frac{(1+r)^{n-1}}{(1-r)^n} \right)\right\vert_{r^{i-1}}
\\
&=\left.\left(r^{n-1}+
\left(\frac{1+{\rm i}\, r}{1+{\rm i}}\left(\frac{1+\tau}{1+\tau\, r}-\frac{2}{1-r}\right)+\frac{2r}{1-r}\right)\frac{(1+r)^{n-1}}{(1-r)^n} \right)\right\vert_{r^{i-1}}
= K_{i,n}\ .
\end{split}
\end{equation*}
Here again, we removed the crossed out $\cancel{r}$ as it plays no role for $i\leq n$ and we used the easily checked identity
\begin{equation*}
\frac{\tau+{\rm i}}{1+{\rm i}}\frac{(1+\tau)\, r}{1-\tau\, r}+\frac{\tau-1}{1+{\rm i}}=\frac{1+{\rm i}\, r}{1+{\rm i}}\left(\frac{1+\tau}{1+\tau\, r}-\frac{2}{1-r}\right)+\frac{2r}{1-r}\ .
\end{equation*}
The lemma follows.
\end{proof}
With the identities \eqref{LKlemma}, the expression \eqref{eq:penult} is transformed into \eqref{eq:ult} by a simple expansion of the determinant with respect to the last column.
This completes the proof that the expression \eqref{eq:penult} for $\hat{Z}^{20V_{BC2}}(\tau)$ matches that \eqref{eq:ult} for $T_{4}^{(2)}(\mathcal{A}_n;\tau)$. This amounts
precisely to the second statement of Theorem~\ref{thmref20VT4}.

\medskip
To compute  $\hat{Z}^{20V_{BC1}}(\tau)$, we first note that the expression \eqref{eq:ult} for $\hat{Z}^{20V_{BC2}}(\tau)$ can be substituted in \eqref{eq:20to6} to get 
\begin{equation*}
\hat{Z}^{6V}_{\left[1,\sqrt{2},1\right]}(\sigma)
=\det\limits_{1\leq i,j\leq n}\left(
\begin{matrix}
\left.\left(\frac{1}{1-r\, s}+\frac{2r}{(1-r)(1-r-s-r\, s)}\right)\right\vert_{r^{i-1}s^{j-1}}& j\leq n-1\\
\left.\left(\frac{1}{1-r\, s}+\frac{2\sigma\, r}{(1-(2\sigma-1)\, r)(1-r-s-r\, s)}\right)\right\vert_{r^{i-1}s^{j-1}}& j= n\\
\end{matrix}
\right)
\end{equation*}
for $\sigma=\frac{1+\tau}{2}$.
The partition function $\hat{Z}^{20V_{BC1}}(\tau)$ is then obtained via \eqref{eq:6to20BC1}, which yields 
\begin{equation*}
\begin{split}
\hat{Z}^{20V_{BC1}}(\tau)
&=\frac{2\tau}{1+\tau}\ \det\limits_{1\leq i,j\leq n}\left(
\begin{matrix}
\left.\left(\frac{1}{1-r\, s}+\frac{2r}{(1-r)(1-r-s-r\, s)}\right)\right\vert_{r^{i-1}s^{j-1}}& j\leq n-1\\
\left.\left(\frac{1}{1-r\, s}+\frac{(1+\tau)\, r}{(1-\tau\, r)(1-r-s-r\, s)}\right)\right\vert_{r^{i-1}s^{j-1}}& j= n\\
\end{matrix}
\right)\\
&\ \ \ +\frac{1-\tau}{1+\tau}\ 
\det\limits_{1\leq i,j\leq n}\left(
\begin{matrix}
\left.\left(\frac{1}{1-r\, s}+\frac{2r}{(1-r)(1-r-s-r\, s)}\right)\right\vert_{r^{i-1}s^{j-1}}& j\leq n-1\\
\left.\left(\frac{1}{1-r\, s}\right)\right\vert_{r^{i-1}s^{j-1}}& j= n\\
\end{matrix}
\right)\\
&=\det\limits_{1\leq i,j\leq n}\left(
\begin{matrix}
\left.\left(\frac{1}{1-r\, s}+\frac{2r}{(1-r)(1-r-s-r\, s)}\right)\right\vert_{r^{i-1}s^{j-1}}& j\leq n-1\\
\left.\left(\frac{2\tau}{1+\tau}\left(\frac{1}{1-r\, s}+\frac{(1+\tau)\, r}{(1-\tau\, r)(1-r-s-r\, s)}\right)+\frac{1-\tau}{1+\tau}\frac{1}{1-r\, s}\right)\right\vert_{r^{i-1}s^{j-1}}& j= n\\
\end{matrix}
\right)\\
&=\det\limits_{1\leq i,j\leq n}\left(
\begin{matrix}
\left.\left(\frac{1}{1-r\, s}+\frac{2r}{(1-r)(1-r-s-r\, s)}\right)\right\vert_{r^{i-1}s^{j-1}}& j\leq n-1\\
\left.\left(\frac{1}{1-r\, s}+\frac{2\tau\, r}{(1-\tau\, r)(1-r-s-r\, s)}\right)\right\vert_{r^{i-1}s^{j-1}}& j= n\\
\end{matrix}
\right)\ .\\
\end{split}
\end{equation*}
We recognize the expression of Theorem~\ref{refinT4} for $T_{4}^{(1)}(\mathcal{A}_n;\tau)$ (up to a trivial shift in the indices) so that the first statement of Theorem~\ref{thmref20VT4}
follows.

\section{Other boundary conditions}
\label{sec:other}

In this section, we explore other possible DWBC-like boundary conditions. One of them,
which we call DWBC3,  leads to a striking conjecture.

\subsection{The 20V model with DWBC3}\label{sec:dwbc3}

\begin{figure}
\begin{center}
\includegraphics[width=12cm]{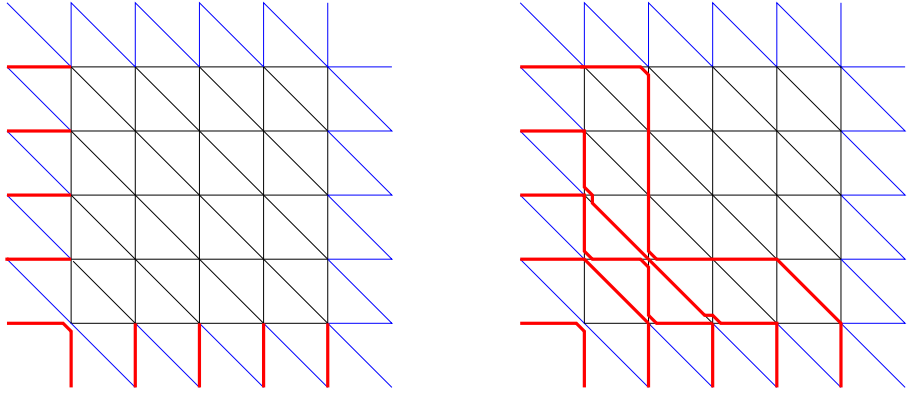}
\end{center}
\caption{\small Left: DWBC3 boundary conditions for an $n \times n$ grid for $n=5$. 
All outer horizontal edges along the West boundary,
as well as all vertical edges along the South boundary are occupied by paths. All other outer edges are empty. 
Right: This gives rise to
configurations of $n=5$ non-intersecting osculating Schr\"oder paths, such as that depicted.}
\label{fig:grid3}
\end{figure}

We consider the following variant of the DWBC1,2 of Section \ref{sec:20VDWBC} for the 20V model.
We still consider a square grid of size $n\times n$ in the square lattice with the second diagonal edge 
on each face. The boundary conditions on the external edge orientations are now as follows: (i) all horizontal
external edges point towards the square domain (ii) all vertical external edges point away from the square domain
(iii) all diagonal external edges points towards the NW. 

In the osculating Schr\"oder path formulation, we have paths entering the grid on each horizontal external edge
on the West boundary, and exiting the grid on each vertical external edge along the South boundary (see Fig.~\ref{fig:grid3} for an illustration). 

The 20V-DWBC3 configurations are easily counted by use of transfer matrices, giving rise to the sequence
$B_n$ of \eqref{seqtwo}. Remarkably, these numbers appear in the context of yet another enumeration 
problem of domino tilings, which we describe now.

\subsection{Domino tilings of a triangle and the DWBC3 conjecture}

\subsubsection{Domino tilings of a square}
Let us consider the number of tilings of a $(2n)\times (2n)$
square domain ${\mathcal S}_n$ of the square lattice by means of rectangular dominos of size $2\times 1$ and $1\times 2$.
This is part of the archetypical dimer problems solved by Kasteleyn and Temperley and Fisher \cite{Kasteleyn,TF}.
If $T({\mathcal S}_n)$ denotes this number, we have:
$$ T({\mathcal S}_n)=\prod_{i=1}^n \prod_{j=1}^n \left\{ 4 \cos^2\left(\frac{i}{2n+1}\right)+4 \cos^2\left(\frac{j}{2n+1}\right)\right\} \ .$$
It was later observed that:
\begin{equation}\label{Dtob} T({\mathcal S}_n)=2^n\, b_n^2 \end{equation}
with
$$ b_n=\prod_{1\leq i<j\leq n} \left\{ 4 \cos^2\left(\frac{i}{2n+1}\right)+4 \cos^2\left(\frac{j}{2n+1}\right)\right\}=1,\, 3,\, 29, \, 901,\, ... $$
where we recognize the first terms of the sequence $B_n$ \eqref{seqtwo} above.
The asymptotics of the numbers $b_n$ for large $n$ read:
$$\lim_{n\to \infty} \frac{1}{n^2}{\rm Log}(b_n)= \frac{2}{\pi} G =.583121808... \ ,$$
where $G$ is the Catalan constant, $G=1-\frac{1}{3^2}+\frac{1}{5^2}-\frac{1}{7^2}+\cdots$.

\subsubsection{Domino tilings of a triangle}\label{sec:triangle}
A combinatorial proof of the integrality of $b_n$ due to Patcher \cite{Patcher}
shows in fact that the sequence $b_n$ enumerates domino 
tilings of a ``triangle" ${\mathcal T}_n$ (half of the square $\mathcal{S}_n$), with the following shape of an
inverted staircase with a first step
of size 1, and all other steps of size 2:
\begin{equation*}
\raisebox{-.6cm}{\hbox{\epsfxsize=6.cm \epsfbox{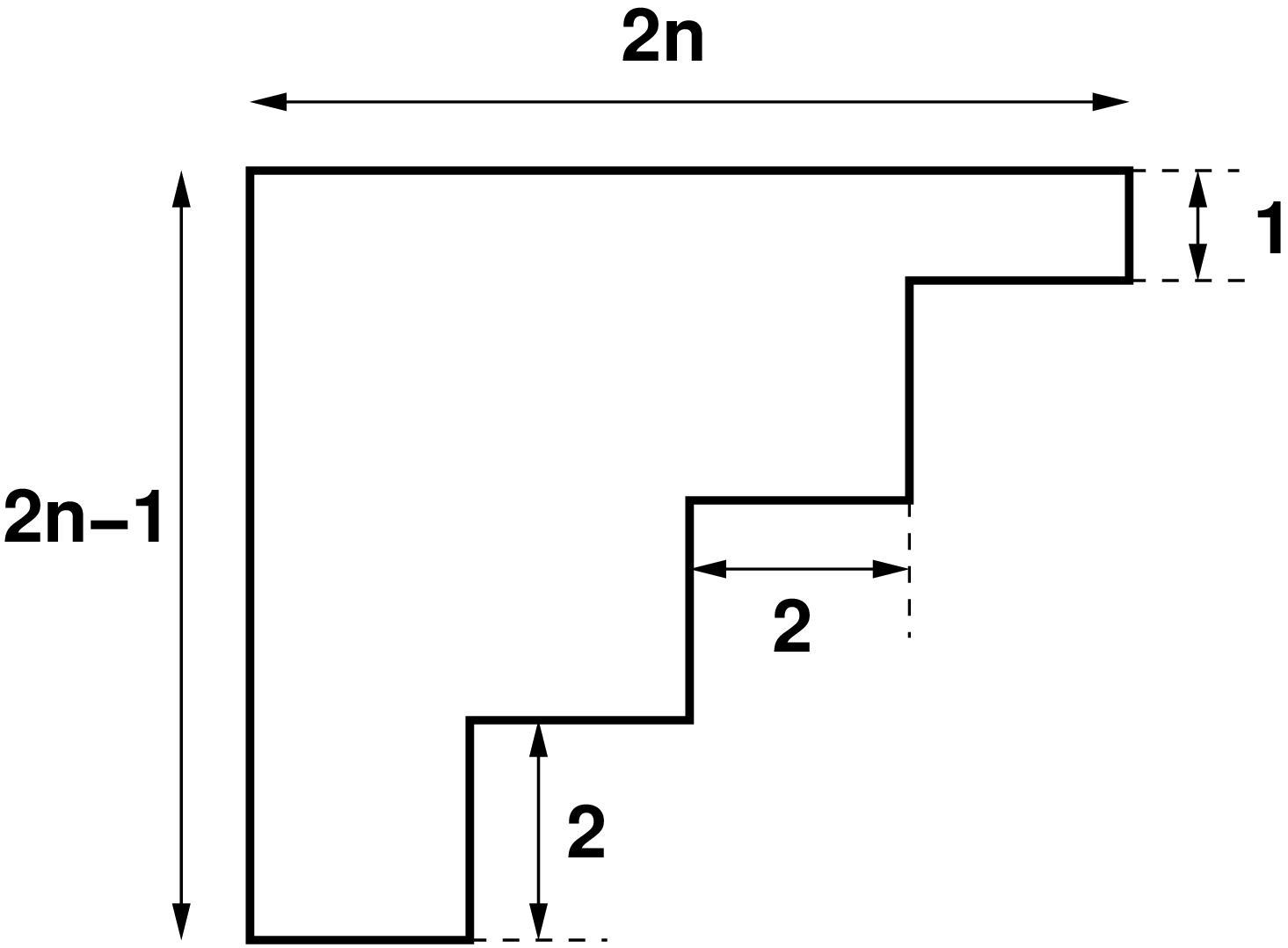}}}
\end{equation*}
In our notations, we write:
\begin{equation*}
b_n=T({\mathcal T}_n)\ .
\end{equation*}

From a practical point of view, it is interesting to recover the first terms of the sequence 
$b_n=T({\mathcal T}_n)$ from standard combinatorial techniques easily amenable to generalizations.
As before, the enumeration of domino tilings of ${\mathcal T}_n$ is easily performed by means of non-intersecting Schr\"oder paths, now with steps $(1,1)$, $(1,-1)$ and $(2,0)$
as shown below:
\begin{equation*}
\raisebox{-.6cm}{\hbox{\epsfxsize=15.cm \epsfbox{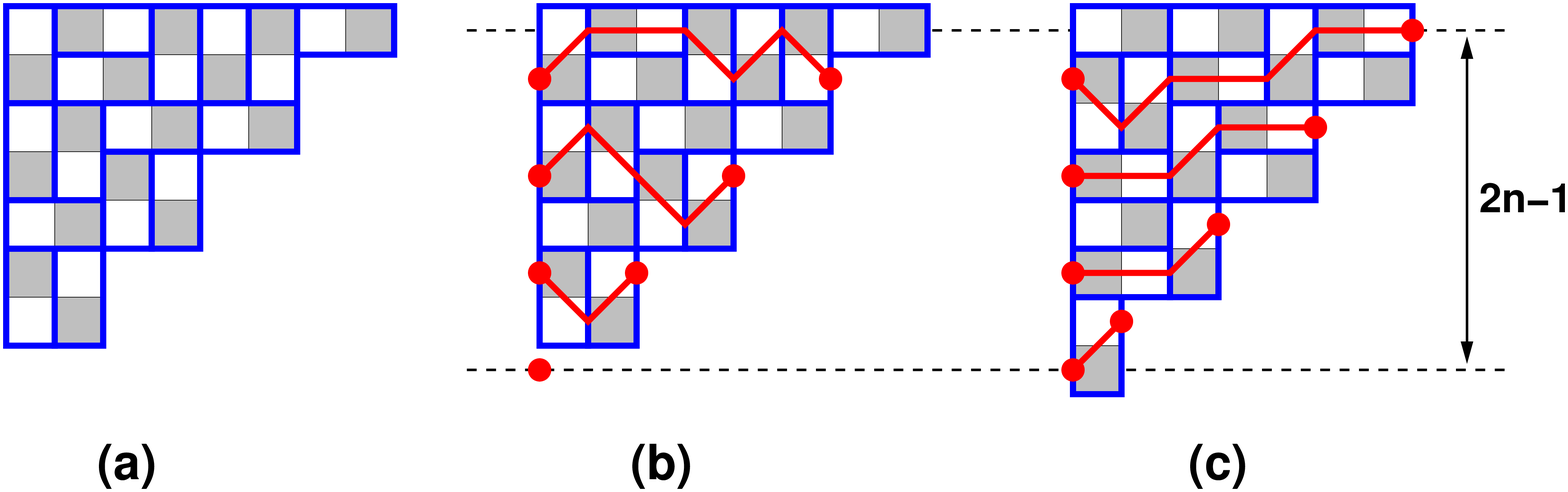}}}
\end{equation*}
Here we have first bi-colored the underlying square lattice, considered a tiling configuration (a), 
and used the dictionary \eqref{dominos} mapping each domino to a path step. We have displayed two equivalent path formulations 
(b) and (c) of the same domain (upon reflection and rotation by $90^\circ$). In both cases, the paths 
are non-intersecting Schr\"oder paths with fixed ends as shown, and {\it constrained to remain within
a strip of height $2n-1$} (we have added a trivial path of length $0$ in the case (b) for simplicity).

The path configurations are best enumerated by means of the LGV formula. 
Let $S_{a,b}^{(L)}(M)$
denote the partition function of a single Schr\"oder path (with steps $(1,1)$, $(1,-1)$ and $(2,0)$),
starting at point $(0,a)$, ending at point $(M,b)$ and
constrained to remain in the strip $0\leq y\leq L$. Then the partition function for domino tilings of ${\mathcal T}_n$
is:
$$ T({\mathcal T}_n)=\det\limits_{0\leq i,j\leq n-1}\left( S_{2i,2j}^{(2n-1)}(2j) \right) 
=\det\limits_{0\leq i,j\leq n-1}\left( S_{2i,2j+1}^{(2n-1)}(2j+1) \right)$$
corresponding respectively to situations (b) and (c).

The single path partition function $S_{a,b}^{(L)}(M)$ may easily be generated from the following recursion relation:
$$ S_{a,b}^{(L)}(M)= \left\{ \begin{matrix}
& 0 \ \  {\rm if} \ a<0\  {\rm or}\  a>L\  {\rm or}\  b<0\  {\rm or}\  b>L ,\\
&S_{a,b}^{(L)}(M-2)+S_{a-1,b}^{(L)}(M-1)+S_{a+1,b}^{(L)}(M-1) \ \  {\rm otherwise},
\end{matrix}\right. $$
together with the initial conditions $S_{a,b}^{(L)}(-1)=0$ and $S_{a,b}^{(L)}(0)=\delta_{a,b}$ when $a,b\in [0,L]$ and $0$ otherwise. This allows to recover the first terms of the sequence $b_n$.

\subsubsection{The DWBC3 conjecture}

In view of the matching of the first values of $B_n$ with the sequence $b_n$, we are led to the following:

\begin{conj}
\label{DWBC3conj}
We conjecture that the number of configurations of the 20V model with DWBC3 boundary conditions
on an $n\times n$ square grid is the same as the number of domino tilings of the triangle ${\mathcal T}_n$.
\end{conj}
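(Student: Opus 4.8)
The plan is to follow the very strategy that succeeded for DWBC1,2 in Sections~\ref{sec:20to6} and \ref{sec:proof}, while confronting head-on the extra feature already flagged in the Introduction: DWBC3 unravels not to a standard 6V-DWBC model but to a \emph{staggered} one. Concretely, I would first equip the 20V-DWBC3 configurations with the integrable weights \eqref{eq:integrable} and run the same Yang--Baxter deformation as in Section~\ref{sec:unravel}, expelling all diagonal lines from the $n\times n$ grid. The horizontal and vertical boundary orientations of DWBC3 coincide with those of DWBC1,2, but the West and South \emph{diagonal} external edges are reversed (all diagonals now point NW), so the frozen portion of the unraveled configuration is organized differently: instead of a uniform domain wall one is left with a 6V model on the square grid whose sources and sinks alternate in a staggered pattern. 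The first concrete task is to pin down this staggered boundary datum exactly and to record the global prefactor coming from the frozen region, in the spirit of the $(a_2a_3)^{n^2}$ factor of Theorem~\ref{An6V}.

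Next I would seek a determinantal evaluation of the staggered-boundary 6V partition function. The Izergin--Korepin formula \eqref{eq:IK} does not apply verbatim, so I would either adapt its derivation---the recursion in the spectral parameters, together with the degree and symmetry constraints that fix $Z^{6V}$ uniquely---to the staggered sources, or absorb the staggering into a modified choice of inhomogeneities $z_i,w_j$ on a reindexed lattice so that a genuine determinant reappears. Granting such a formula, I would then take the homogeneous limit exactly as in Appendices~\ref{appendixA}--\ref{appendixB}, writing the answer as the determinant of the $n\times n$ truncation of an explicit matrix, which is the form best suited to comparison with the tiling side.

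On the tiling side I would start from the LGV determinant $T(\mathcal{T}_n)=\det_{0\le i,j\le n-1}\big(S_{2i,2j}^{(2n-1)}(2j)\big)$ of Section~\ref{sec:triangle} and convert the strip-confined single-path partition functions $S_{a,b}^{(L)}(M)$ into workable data via the reflection principle (method of images), expressing each as an alternating sum of \emph{unconfined} Schr\"oder counts over the images of the endpoints across the two walls $y=-1$ and $y=L+1$. The final step would be a determinant identity, in the spirit of the remarkable identity \eqref{eq:remarkableidentity}, matching the staggered-6V determinant of the previous paragraph to this image-expanded Gessel--Viennot determinant, with row and column operations used to reconcile the two forms.

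I expect the main obstacle to be twofold and to live precisely at the junction of the last two paragraphs. First, unlike the DWBC1,2 situation of Section~\ref{sec:proof}, where both sides collapsed to truncations of a single infinite matrix \emph{independent of $n$}, the triangle carries a reflecting strip whose height $2n-1$ grows with $n$; the image expansion therefore produces an intrinsically $n$-dependent matrix, so the clean ``one universal generating function'' mechanism fails and must be replaced by a more delicate argument---for instance by diagonalizing the strip transfer matrix in sine/Chebyshev modes, which is exactly what generates the $\cos^2$ product formula for $b_n$. Second, and more fundamentally, establishing that a usable determinant formula for the staggered 6V model exists at all is nontrivial, since staggering generically destroys the symmetry underlying Izergin--Korepin; proving that the particular staggering arising from DWBC3 is integrable enough to retain a determinant (and, given that $b_n$ is a product, special enough to eventually yield one) is, I believe, the true crux, and is the likely reason the statement has so far resisted proof and remains a conjecture.
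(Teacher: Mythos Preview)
The statement is a \emph{conjecture}; the paper does not prove it, only checks it numerically up to $n=6$ and, in Section~\ref{sec:conclusion}, records a partial reformulation. So there is no proof in the paper to match, and your proposal is (as you yourself concede in the last paragraph) a strategy sketch rather than a proof.

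That said, your sketch contains a concrete misstep at the very first move. You propose to ``expel all diagonal lines from the $n\times n$ grid'' and land on a 6V model on the same $n\times n$ grid with a staggered \emph{boundary}. The paper explains (Section~\ref{sec:conclusion}) that this does not work for DWBC3: the $(n-1)$ bottom diagonal lines are ``imprisoned'' by the alternating external orientations on the West and South boundaries and cannot be expelled in the sense of Section~\ref{sec:unravel}. The Yang--Baxter slide is of course still valid, but the region created outside the grid is no longer frozen by the ice rule, so no clean prefactor-times-6V factorisation emerges. What the paper does instead is a different deformation (Fig.~\ref{fig:staggered}): it keeps the diagonals and reorganises them into a $2n\times 2n$ square grid, obtaining a 6V model with staggered \emph{weights} on four sublattices (values $(1,\sqrt{2},1)$, $(\sqrt{2},1,1)$, $(\sqrt{2},1,1)$, $(1,0,1)$) and staggered boundary, yielding $Z^{20V_{BC3}}(n)=2^{-n^2}Z^{6V_{\rm staggered}}_{WS}(2n)$.

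Your diagnosis of the essential obstacle is nonetheless on target and coincides with the paper's: there is no known Izergin--Korepin-type determinant for this staggered 6V partition function, so the machinery of Section~\ref{sec:proof} has no entry point. Your proposed remedies --- reflection/image expansions for the strip-confined Schr\"oder counts on the tiling side, and an attempt to manufacture a determinant on the 6V side by reindexing inhomogeneities --- are reasonable directions, but neither is carried out, and the paper explicitly leaves this to future work.
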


We have checked this conjecture numerically up to size $n=6$.

\subsection{Pentagonal extensions of the DWBC3 conjecture}

\begin{figure}
\begin{center}
\includegraphics[width=12cm]{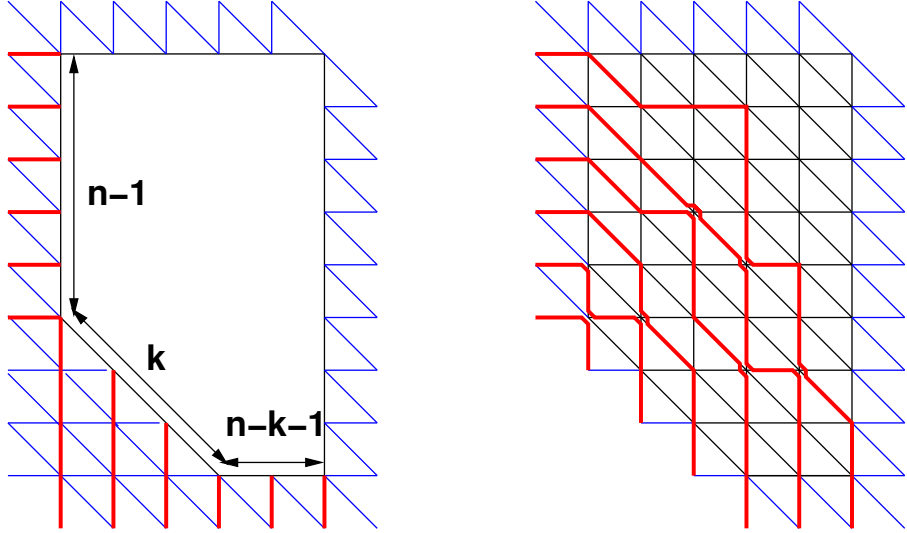}
\end{center}
\caption{\small Left: Extension of the DWB3 boundary on a $(n+k)\times n$ rectangular grid in osculating
Schr\"oder path representation: due to non-intersection
constraints the effective domain is a pentagon of shape $(n-1)\times k\times (n-k-1)\times (n+k-1)\times (n-1)$. 
Right: A sample path configuration.}
\label{fig:hexagon}
\end{figure}

\subsubsection{The 20V-DWBC3 model on a pentagon}\label{sec:penta}
We now consider a variant of the model 20V-DWBC3 on a ``pentagon" $P_{n,k}$ of the original triangular lattice.
Starting from a rectangular grid of shape $(n+k)\times n$, we impose that the $n$ top external horizontal edges along 
the West boundary be occupied by paths, while the $k$ bottom ones be empty (vacancies), and impose the same condition 
as for DWBC3 on vertical external edges along the South boundary, while all other external edges are unoccupied
(see Fig.~\ref{fig:hexagon} for an illustration with $n=6$ and $k=3$). Due to the non-intersection constraint which freezes some portions of the paths, 
the effective domain reduces to a pentagon of shape\footnote{Note the distinction between the notion of grid and that of shape: the size of a shape is measured in actual length of its sides, which is one less than the measure on a grid.} $(n-1)\times k\times (n-k-1)\times (n+k-1)\times (n-1)$ (see Fig.~\ref{fig:hexagon}). 
This holds for $k< n-1$. For $k\geq n-1$, the effective domain degenerates into a tetragon of shape
$(n-1)\times (n-1)\times (2n-2)\times (n-1)$ independent of $k$, which can be viewed as half of a regular hexagon on the
original triangular lattice.

The number $p_{n,k}$ of osculating Scr\"oder path configurations on $P_{n,k}$ is easily computed by transfer matrix techniques.
The first few are listed below for $n=1,2,...,6$
\begin{eqnarray}
p_{n,0}&=& 1,\  3, \ 29, \ \ \ 901, \ \ 89893, \ 28793575 \ ... \nonumber\\
p_{n,1}&=& 1,\  4, \ 56, \ 2640, \ 411840, \ 210613312 \ ... \nonumber\\
p_{n,2}&=& 1,\  4, \ 60, \ 3268, \ 628420, \ 417062340 \ ... \nonumber\\
p_{n,3}&=& 1,\  4, \ 60, \ 3328, \ 675584, \ 495222784 \ ... \nonumber\\
p_{n,4}&=&  1,\  4, \ 60, \ 3328, \ 678912, \ 507356160 \ ...\nonumber\\
p_{n,5}&=&  1,\  4, \ 60, \ 3328, \ 678912, \ 508035072 \ ...\label{penta}
\end{eqnarray}

As expected, we note a saturation property of $p_{n,k}$ which becomes independent of $k$ for $k\geq n-1$. 

\subsubsection{Tilings of extended triangles}\label{sec:extri}
We start from the triangular domain ${\mathcal T}_n$ of Sect.~\ref{sec:triangle} above, and consider the following extensions ${\mathcal T}_{n,k}$, $k=0,1,...,n$.
Focussing on the non-intersecting Schr\"oder path description of the tiling configurations given by the example (c) above, ${\mathcal T}_{n,k}$
corresponds to {\it raising by $k$ vertical steps the top border of the domain accessible to the paths},
while keeping identical starting and endpoints. 
In practice, for $k\leq n-1$, the new effective domain ${\mathcal T}_{n,k}$ accessible to the Schr\"oder paths takes the following shape
(represented here for  $n=4$) which can be viewed as Aztec-like extensions of  ${\mathcal T}_{n}$:
\begin{equation*}
\raisebox{.3cm}{\hbox{\epsfxsize=15.cm \epsfbox{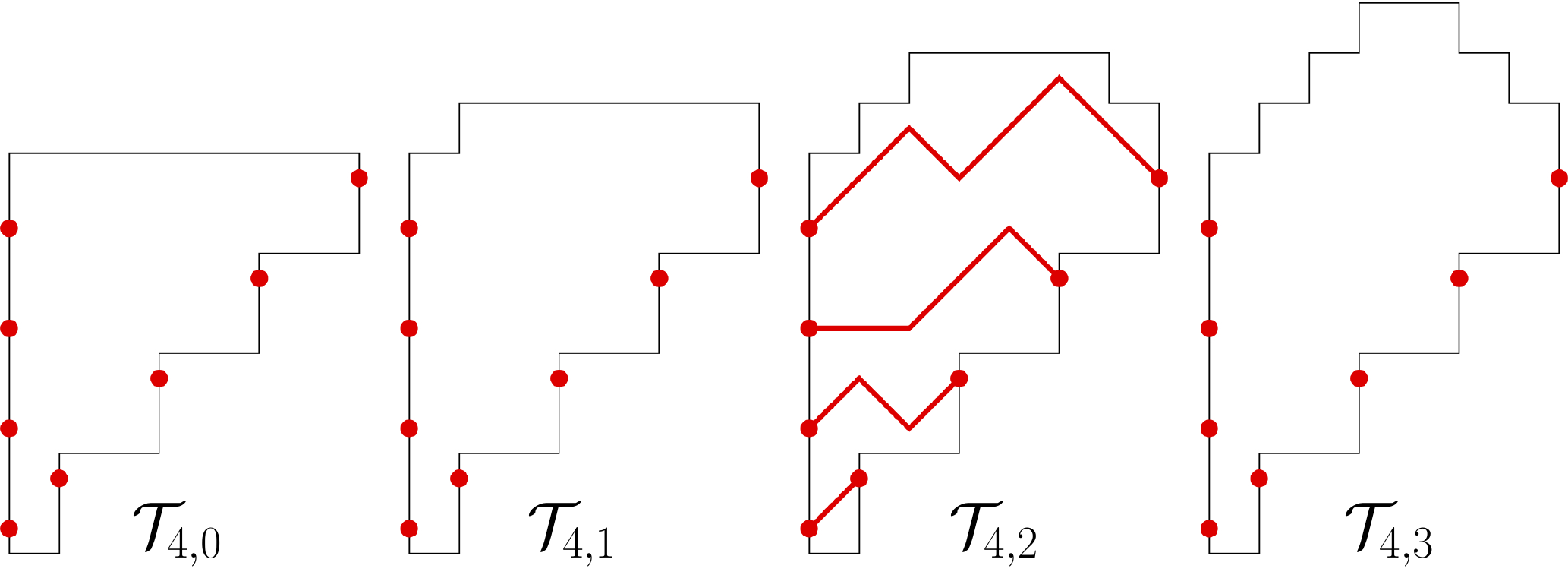}}}
\end{equation*}
As a result, the number of configurations
augments, until it reaches a threshold at $k=n-1$, since, for $k\geq n-1$, raising the top border further no longer affects the number of configurations as the paths never reach this height. 

The counting of tiling configurations is readily performed by use of the LGV formula for the corresponding Schr\"oder paths:
$$ T({\mathcal T}_{n,k}) 
=\det\limits_{0\leq i,j\leq n-1}\left( S_{2i,2j+1}^{(2n-1+k)}(2j+1) \right)$$
where we have simply raised the top boundary by $k$. As a result, we find perfect agreement between
the numbers $p_{n,k}$ and $T({\mathcal T}_{n,k})$.

\subsubsection{The extended DWBC3 conjecture}

The remarkable coincidence between the numbers in Sections \ref{sec:penta} and \ref{sec:extri} leads to the following:
\begin{conj}
\label{pentaconj}
We conjecture that the number of configurations of the 20V model with extended DWBC3 boundary conditions
on a pentagon $P_{n,k}$ is equal to the number of domino tilings of the extended triangle ${\mathcal T}_{n,k}$ for all $n,k$.
\end{conj}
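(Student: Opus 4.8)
The plan is to mirror, as closely as possible, the strategy that succeeded for DWBC1,2 in Section~\ref{sec:proof}: express both sides as determinants of finite truncations of infinite matrices and then match them by elementary row and column operations. The tiling side is already essentially in this form. Indeed, by the Lindstr\"om--Gessel--Viennot formula of Section~\ref{sec:extri},
\[
T({\mathcal T}_{n,k})=\det\limits_{0\leq i,j\leq n-1}\left(S_{2i,2j+1}^{(2n-1+k)}(2j+1)\right),
\]
so the first concrete step is to produce a generating function for the strip-constrained single-path partition functions $S_{a,b}^{(L)}$. Solving the recursion of Section~\ref{sec:triangle} by the method of images (reflecting across the two walls $y=0$ and $y=L$ with $L=2n-1+k$) yields an explicit Chebyshev-type kernel whose coefficients furnish the matrix entries above. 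This makes the $k$-dependence transparent on the tiling side: increasing $k$ merely raises the reflecting wall, and the resulting change in the kernel is a controlled correction supported on the reflected images.

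The harder side is the 20V count $p_{n,k}$. Following Section~\ref{sec:unravel}, I would first pass to the integrable Kagome weights \eqref{eq:integrable} and unravel the diagonal lines; because the Yang--Baxter relations \eqref{eq:YB} hold throughout, this again reduces the model to a 6V model on the central grid, times a fixed prefactor coming from the frozen expelled vertices. The crucial difference with Theorem~\ref{An6V} is that the DWBC3 prescription does \emph{not} unravel to pure domain-wall boundaries but to a 6V model with \emph{staggered} boundary conditions, i.e.\ a mixture of fixed and free edges along the relevant sides. The goal would then be a determinant formula for this staggered partition function, with its homogeneous limit taken exactly as in the passage \eqref{eq:IK}$\to$\eqref{eq:IKlimit}, producing a second determinant of a truncated infinite matrix, followed by a matching identity of the type \eqref{eq:remarkableidentity} identifying it with the strip kernel above.

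The main obstacle is precisely this staggered-boundary determinant: the Izergin--Korepin formula \eqref{eq:IK} is tailored to genuine DWBC and does not apply verbatim here, which is almost certainly why the conjecture has resisted proof. I would attempt to adapt one of the known generalizations --- partial domain-wall or one-reflecting-end partition functions, or the quantum Knizhnik--Zamolodchikov approach --- to extract the required closed form. As a fallback that isolates the genuine difficulty, I would try to \emph{reduce the extended conjecture to its base case} $k=0$ (Conjecture~\ref{DWBC3conj}): since the strip-height description gives a clean one-step recursion for $T({\mathcal T}_{n,k})$ in $k$, it would suffice to establish the matching recursion for $p_{n,k}$ --- governing the effect of converting one more West-boundary edge into a vacancy --- together with the stabilization $p_{n,k}=p_{n,n-1}$ and $T({\mathcal T}_{n,k})=T({\mathcal T}_{n,n-1})$ for $k\geq n-1$ already observed in \eqref{penta}, which provides a useful consistency anchor. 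This would leave only the single genuinely hard input, the equality $B_n=b_n$ at $k=0$, to be settled by the integrable route above.
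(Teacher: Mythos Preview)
The statement you are attempting to prove is a \emph{conjecture} in the paper, not a theorem: the authors offer no proof and explicitly state only that ``the conjecture has been checked numerically for $n$ up to 6 and arbitrary $k$.'' Consequently there is no paper proof to compare against, and your proposal is best read as a research outline rather than a proof. You are commendably candid about this yourself (``which is almost certainly why the conjecture has resisted proof'').

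That said, one concrete step in your outline is inaccurate and would not go through as stated. You write that after passing to Kagome weights and unraveling, ``this again reduces the model to a 6V model on the central grid, times a fixed prefactor coming from the frozen expelled vertices,'' with only the boundary conditions becoming staggered. For DWBC3 this is not what happens. As the paper explains in Section~\ref{sec:conclusion} (the discussion around Fig.~\ref{fig:staggered}), the $(n-1)$ bottom diagonal lines are \emph{imprisoned} by the alternating external orientations on the West and South boundaries and \emph{cannot} be expelled as in Section~\ref{sec:unravel}. Instead one deforms the diagonals into a $2n\times 2n$ grid on which the 6V model has \emph{staggered weights} (four sublattices with weights $(1,\sqrt 2,1)$, $(\sqrt 2,1,1)$, $(\sqrt 2,1,1)$, $(1,0,1)$) as well as staggered boundary conditions. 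So the obstruction is not merely an exotic boundary on an ordinary 6V-DWBC model; the bulk weights themselves are inhomogeneous in a way that lies outside the Izergin--Korepin framework, and the authors note explicitly that ``there are no known general determinantal formulas for the partition functions of the staggered 6V model.'' Your suggestion to try partial-domain-wall or reflecting-end variants therefore underestimates the difficulty: those techniques still assume a homogeneous (or at least row/column-separated) bulk.

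Your fallback idea --- reduce the extended conjecture to the base case $k=0$ via a one-step recursion in $k$ on both sides --- is reasonable as a structural reduction, and the stabilization for $k\geq n-1$ is indeed established in the paper on both sides. But this leaves precisely Conjecture~\ref{DWBC3conj} open, which is the heart of the matter; the paper offers no proof of that either.
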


The conjecture has been checked numerically for $n$ up to 6 and arbitrary $k$.

\subsection{More Domain Wall Boundary Conditions with no conjecture}\label{sec:oth}
\begin{figure}
\begin{center}
\includegraphics[width=12cm]{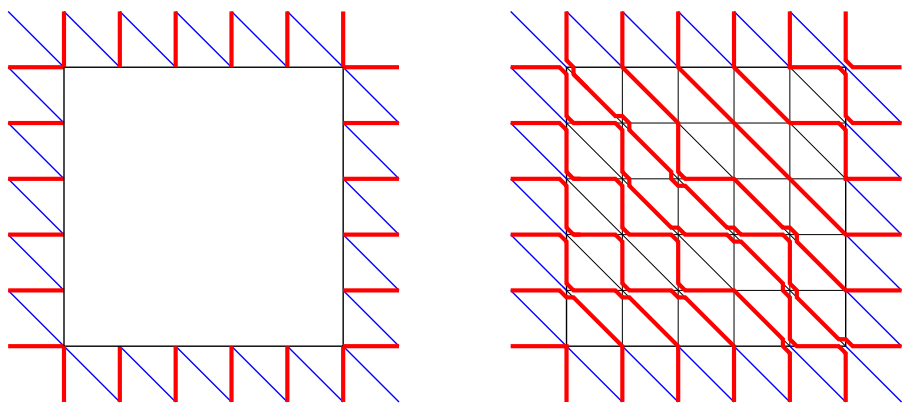}
\end{center}
\caption{\small Left: 20V-DWBC4 on an $n\times n$ square grid: all horizontal and vertical external edges are occupied by paths, while diagonals are empty.
Right: A sample osculating Schr\"oder path configuration.}
\label{fig:other}
\end{figure}

We have considered some other variants of the DWBC boundary conditions, but found no conjecture for those.
First we studied the 20V with the DWBC4 illustrated in Fig.~\ref{fig:other}
for the osculating Schr\"oder path formulation, in which all horizontal and vertical external edges 
of a square $n\times n$ grid are occupied by paths, all the other external edges being unoccupied\footnote{We use here the
denomination DWBC4 for simplicity although the boundary conditions do not infer the creation of domain walls in general, as exemplified 
by the trivial configuration where all horizontal (resp. vertical, diagonal) arrows point East (resp. South, Northwest).}.
The transfer matrix calculation leads to the following sequence:
\begin{equation}\label{sq4}
1,\  3,\  59,\  7813,\  6953685,\ 41634316343 ... \end{equation}
for which we have found no other interpretation.

\begin{figure}
\begin{center}
\includegraphics[width=8cm]{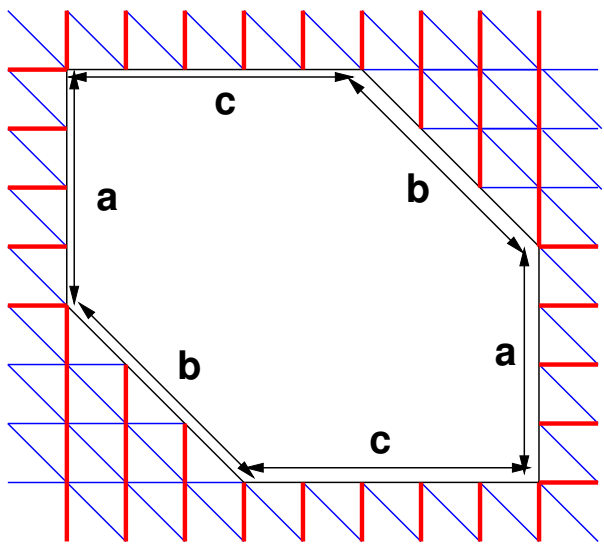}
\end{center}
\caption{\small The hexagonal extension of 20V-DWBC4 obtained on a rectangular  $(a+b+1)\times (b+c+1)$
grid
by imposing $b$ vacancies on the bottom West and top East boundaries.}
\label{fig:more}
\end{figure}

\begin{table}
  \begin{center}
    \label{Nabc}
    \begin{tabular}{|c |l |l |l|l |l | l | l|l|} 
\hline
 b,c & a=\textbf{0} & \textbf{1} & \textbf{2}& \textbf{3}& \textbf{4}& \textbf{5}& \textbf{6} 
\\
\hline
\bf 0,1 & 1& 3& 8 & 21 & 55 & 144 & 377 \\
\hline
\bf 0,2 & 1& 8& 59& 415& 2874& 19810& 136358 \\
\bf 1,1 & 3&  11& 41&  153& 571&  2131&  7953  \\
\hline
\bf 0,3 & 1& 21& 415& 7813& 143336& 2598735& 46881130 \\
\bf 1,2 &  8& 85& 959& 10934& 124869& 1426389 &16294360\\
\bf 2,1 &  5& 23& 103& 456& 2009& 8833& 38803 \\
\hline
\bf 0,4 &  1& 55& 2874& 143336& 6953685& 331859360& 15697347566 \\
\bf 1,3 &   21& 604& 19018& 615405& 20055060& 654666505 &21378877706\\
\bf 2,2 &   20& 333& 5331& 83821& 1305844& 20250090 &313317426\\
\bf 3,1 &  7& 39& 201& 1000& 4888&  23673 &114087\\
\hline
\bf 0,5 &  1& 144& 19810& 2598735& 331859360& 41634316343& 5164420164680 \\
\bf 1,4 &   55& 4194& 355234& 31391724& 2816672309& 254000932538 &22940968768675\\
\bf 2,3 &   76& 4151& 213173& 10696445& 530068706& 26081095911 &1278122145554\\
\bf 3,2 &   36 &881 &18859 &379449 &7391755 &141473217 &2681264915\\
\bf 4,1 &   9 &59 &343 &1880 &9976 &51944 &267385\\
\hline
    \end{tabular}
    \vskip.5cm
     \caption{The first numbers $N_{a,b,c}$ for $a\in [0,6]$,  $b\geq 0$, $c>0$ and $b+c\leq 5$.}
  \end{center}
\end{table}

\begin{figure}
\begin{center}
\includegraphics[width=16cm]{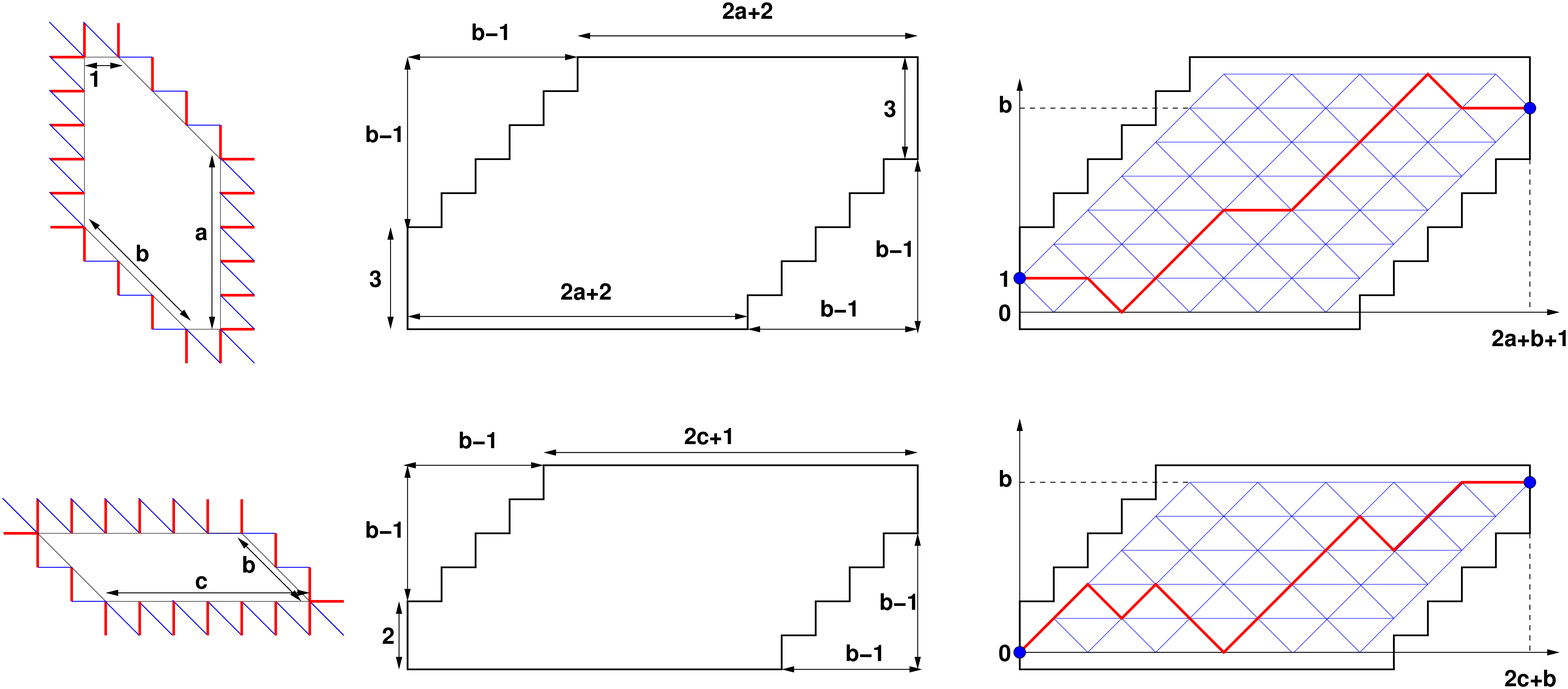}
\end{center}
\caption{\small Top: the number of 20V-DWBC4 configurations on the hexagon $(a,b,c=1)$ (left) is conjecturally identified with the number of domino tilings of a domain
(center), in bijection with configurations of a single Schr\"oder path (right) from $(0,1)$ to $(2a+b+1,b)$, constrained to stay on a strip $0\leq y\leq b+1$. 
Bottom: the number of 20V-DWBC4 configurations on the hexagon $(a=0,b,c)$ (left) is conjecturally identified with the number of domino tilings of a domain
(center), in bijection with configurations of a single Schr\"oder path (right) from $(0,0)$ to $(2c+b,b)$, constrained to stay on a strip $0\leq y\leq b$.}
\label{fig:domain3}
\end{figure}

An easy generalization consists in considering arbitrary rectangular grids of size $(a+b+1)\times (b+c+1)$, and 
imposing that all vertical external edges be occupied while only the top $a+1$ horizontal  ones on the West boundary, and bottom $a+1$ horizontal ones on the East boundary be occupied (see Fig.~\ref{fig:more}). Equivalently the bottom $b$ horizontal external edges on the West boundary and the top $b$ ones on the East boundary are empty. This leads to numbers $N_{a,b,c}$ of configurations. 

The case $b=0$ corresponds
to rectangular grids of arbitrary size $(a+1)\times (c+1)$, for which the boundary condition DWBC4 still makes sense. In particular, $N(n-1,0,n-1)$ reproduces the above sequence \eqref{sq4}.
Note also that $N_{a,b,0}=1$, as there is a unique, fully osculating, configuration in that case, as illustrated
for $(a,b,c)=(2,3,0)$ below:
$$
\raisebox{-2.5truecm}{\hbox{\epsfxsize=4.cm \epsfbox{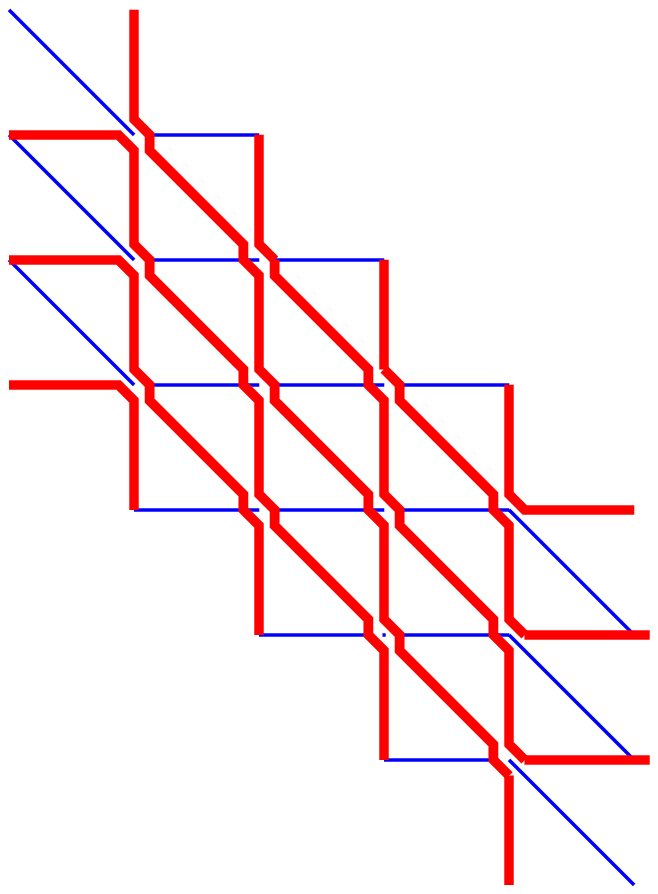}}}
$$

We have listed some of the non-trivial numbers $N_{a,b,c}$ in Table \ref{Nabc}. So far we have 
identified conjecturally only the numbers $N_{a,b,1}$ and $N_{0,b,c}$ as also counting the domino tilings of plane domains
depicted in Fig.~\ref{fig:domain3}. The latter are
easily enumerated by the configurations of a single Schr\"oder path with fixed ends, and
constrained to remain within a strip, as shown on the right. More precisely, we have, with the notation of Section \ref{sec:triangle}:
\begin{conj}
$$ N_{a,b,1}= S^{(b+1)}_{1,b}(2a+b+1),\qquad N_{0,b,c}=S^{(b)}_{0,b}(2c+b) .$$
\end{conj}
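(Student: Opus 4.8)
The plan is to prove both identities by a single bijective scheme carried out in the osculating Schr\"oder path formulation of the 20V model (Figure~\ref{fig:twentyV}), exploiting the fact that in each case one dimension of the grid stays bounded while the other grows with the free parameter. For $N_{0,b,c}$ the number of rows is $b+1$ independently of $c$, and for $N_{a,b,1}$ the number of columns is $b+2$ independently of $a$. I expect that in such a geometry the path ensemble collapses onto a single mobile path, or interface, which propagates along the long direction while being confined by the bounded direction, exactly as suggested by the single-path pictures on the right of Figure~\ref{fig:domain3}. The bounded dimension should become the confining strip and the interface should carry all the entropy of the model.

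First I would record the boundary path data for the hexagon with $R=a+b+1$ rows and $C=b+c+1$ columns: under DWBC4 the entries are the $a+1$ occupied horizontal edges on the West boundary together with the $C$ vertical edges on the North boundary, and the exits are the $a+1$ occupied horizontal edges on the East boundary together with the $C$ vertical edges on the South boundary. Matching entries to exits shows that every configuration contains $a+b+c+2$ osculating paths, so the content of the statement is precisely that all but one of these paths are rigid and that the surviving one is a free Schr\"oder path in a strip.

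The heart of the proof is therefore a \emph{freezing lemma}. Concentrating first on $N_{0,b,c}$ (so $R=b+1$), I would show that the $C$ vertical paths issued from the North boundary are forced by the ice rule and the osculation (non-crossing) rule to descend essentially straight to the South boundary, while exactly one distinguished path enters at the unique occupied West edge and exits at the unique occupied East edge. The claim to establish is that, once the trajectory of this distinguished path is fixed, every remaining edge orientation is determined, so that the background is a deterministic function of the interface. The three local moves available to the interface as it threads around the osculation points translate, under the domino dictionary~\eqref{dominos}, into the three Schr\"oder steps $(1,1)$, $(1,-1)$ and $(2,0)$, and I would check that its endpoints and confining strip are exactly $(0,0)\to(2c+b,b)$ with $0\le y\le b$: the doubling of the horizontal coordinate is the usual feature of the domino-to-path map, while the strip height $b$ records the $b+1$ rows minus one frozen boundary row. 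The case $N_{a,b,1}$ ($C=b+2$) is treated identically after exchanging the roles of rows and columns, giving an interface $(0,1)\to(2a+b+1,b)$ in the strip $0\le y\le b+1$; the shift of endpoints and strip height relative to the previous case reflects that now $a+1$ (rather than a single) horizontal edges are occupied, pinning the interface one unit away from the frozen boundary.

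Finally I would prove invertibility: given any admissible Schr\"oder path of the stated type, one reconstructs the frozen background uniquely and glues it to the interface to produce a valid DWBC4 configuration, whence $N_{0,b,c}=S^{(b)}_{0,b}(2c+b)$ and $N_{a,b,1}=S^{(b+1)}_{1,b}(2a+b+1)$. I expect the freezing lemma to be the main obstacle: one must show that the bounded dimension genuinely leaves only one path mobile and, crucially, that the osculation rule (paths may touch at a node without crossing) introduces no extra freedom into the background -- the subtle point that distinguishes osculating paths from ordinary non-intersecting ones. A useful independent check on the coordinate bookkeeping is the overlap case $N_{0,b,1}$, which the two formulas must evaluate consistently as $S^{(b+1)}_{1,b}(b+1)=S^{(b)}_{0,b}(b+2)$ (both equal $1,3,\ldots$ for $b=0,1,\ldots$). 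As an alternative to the bijective route, both sides are computable by transfer matrices, and one could instead prove that the DWBC4 transfer matrix degenerates, in the bounded geometry, to the tridiagonal strip transfer matrix generating $S^{(L)}_{a,b}(M)$; this would sidestep the osculation subtleties at the price of a less transparent argument.
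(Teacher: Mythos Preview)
The paper does not prove this statement: it is explicitly presented as a \emph{Conjecture}, supported only by the numerical data in Table~1 and the domino-tiling/single-path interpretation sketched in Fig.~\ref{fig:domain3}. There is therefore no proof in the paper against which to compare your proposal; you are attempting to settle an open problem.

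Your bijective plan is a natural line of attack, and the bookkeeping you set up is correct: the path count $a+b+c+2$, the identification of the bounded direction with the strip, and the overlap check $N_{0,b,1}=S^{(b+1)}_{1,b}(b+1)=S^{(b)}_{0,b}(b+2)$ all hold. But the entire argument rests on the ``freezing lemma'' --- that the background of $a+b+c+1$ osculating paths is uniquely determined once the single interface path is specified --- and you do not prove it; you state that you ``would show'' it and then flag it yourself as ``the main obstacle''. This is precisely the non-trivial content of the conjecture. In the 20V model each vertex carries six edges and the local freedom at osculation points is substantially richer than in the 6V case: diagonal steps can substitute for horizontal/vertical pairs, up to three paths may meet at a node, and the ice rule on the triangular lattice admits the twenty local environments of Fig.~\ref{fig:twentyV}. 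It is not at all clear that fixing one interface pins all of this down; had the freezing been routine, the authors would presumably have stated a theorem rather than a conjecture. Your alternative transfer-matrix route is likewise only a sketch: asserting that the DWBC4 transfer matrix ``degenerates'' to the tridiagonal strip transfer matrix for $S^{(L)}_{a,b}(M)$ is itself a substantive claim that would need to be established.

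In short, the proposal is a reasonable strategy and is internally consistent, but it is a plan rather than a proof, and the gap you identify (the freezing lemma, or equivalently the transfer-matrix degeneration) is exactly the open problem.
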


\section{Alternating Phase Matrices}\label{sec:apm}

We may reformulate the 20V models with DWBC1,2,3 in terms of Alternating Phase Matrices (APM),
which generalize the Alternating Sign Matrices (ASM),
with entries among $0$ and the sixth roots of unity, and with specific alternating conditions.

\subsection{From 20V configurations to APM}
In the case of the 6V model with DWBC, one possible construction of ASM is by viewing the six possible vertex
configurations as ``transmitters" or ``reflectors" of the orientation of the arrows when going say from left to 
right and from top to bottom. A vertex either reflects or transmits both directions as a consequence of the {\it ice rule}.
Starting from a 6V-DWBC configuration, we map vertices to entries of the ASM built 
according to the following rules:
\begin{enumerate}
\item If the vertex is a transmitter, the entry of the ASM is $0$;
\item If the vertex is a reflector, the entry is $+1$ if the horizontal arrows point inwards, and $-1$ otherwise.
\end{enumerate}

In the case of the 20V model, each vertex is now viewed as a triple of reflectors or transmitters along
the horizontal, vertical and diagonal directions, say going from NW to SE. To each vertex of a 20V-DWBC1,2 or 3
configuration, we may assign a triple $(h,v,d)$ of elements of $\{0,1,-1\}$
where $h$, $v$ and $d$ indicate the transmitter of reflector state of the horizontal, vertical and diagonal directions respectively,
with the following rules:
\begin{enumerate}
\item If the vertex is a transmitter along a direction, the corresponding entry of the triple is $0$
\item If the vertex is a reflector along a direction, the corresponding entry is $+1$ if the arrows point inwards, and $-1$ otherwise.
\end{enumerate}
For each triple at a vertex, we have the condition $h+d+v=0$ as a consequence of the {\it ice rule}, which imposes that either the vertex is a transmitter
in all three directions (and then $h=d=v=0$), or it is a transmitter in only one direction and a reflector in the other two, and in this latter case, one reflected pair
points inwards and the other outwards, so that the corresponding entries of the triple have zero sum.
This gives rise to seven possible triples: the triple $(0,0,0)$ encountered for 8 of the 20 vertices and six non-zero possible triples, 
according to the dictionary below in terms of osculating Schr\"oder paths:
\begin{eqnarray}
&&\raisebox{-.6cm}{\hbox{\epsfxsize=12.cm \epsfbox{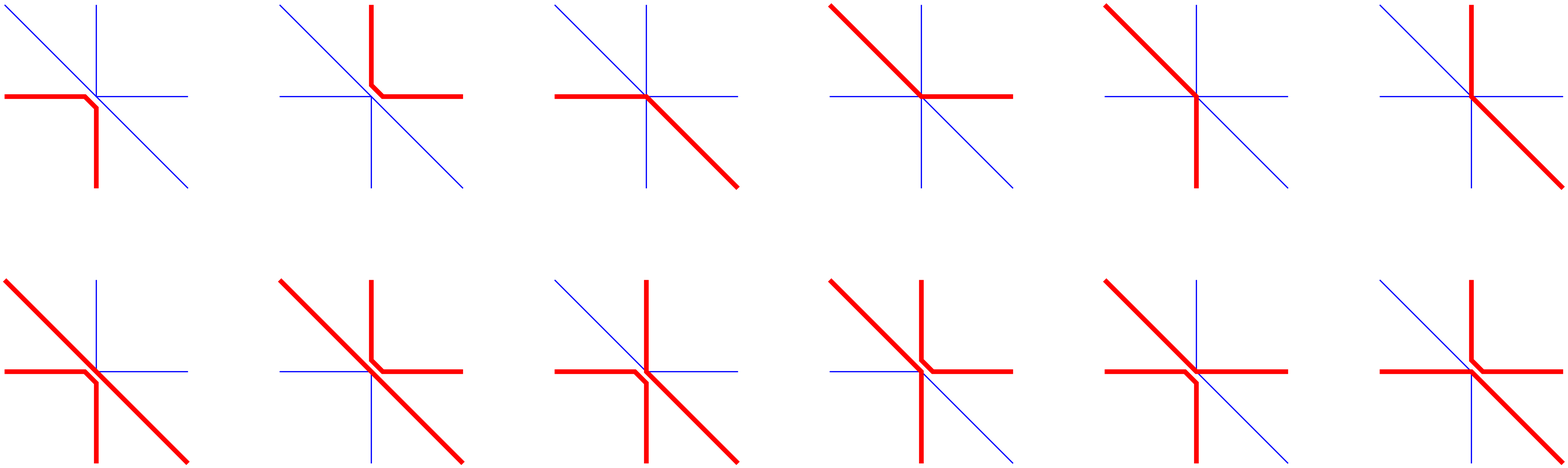}}}\nonumber \\
&&\hskip .0cm (1,-1,0) \hskip .5cm (-1,1,0) \hskip .9cm (1,0,-1)\hskip .5cm (-1,0,1)\hskip .7cm (0,-1,1)\hskip .8cm 
(0,1,-1) \label{apmtriples}\\
&&\hskip .6cm 1 \hskip 1.5cm -1 \hskip 1.5cm -\omega \hskip 1.8cm \omega \hskip 1.55cm -\omega^2\hskip 1.55cm \omega^2\label{apmsixth}
\end{eqnarray}
We have also indicated an alternative bijective coding using the sixth roots of unity with $\omega=e^{2i\pi/3}$, the weight of
a triple $(h,v,d)$ being bijectively mapped onto the complex number $-\omega h+\omega^2 v$ (which includes the coding
$(0,0,0)\mapsto 0$). 
This allows to assign to each 20V configuration of size $n$ with DWBC1,2 or 3 an $n\times n$  matrix with elements 
either $0$ or in 
the set of sixth roots of unity: we shall call such matrices Alternating Phase Matrices (APM) of type 1,2 or 3 respectively. 
Note that all the ASMs  are realized as APMs (with only $\pm 1$ non-zero entries), in all three types. To see why,
start from the osculating path formulation of ASM on the square grid, and then superimpose diagonal entirely
empty or entirely occupied lines, according to the diagonal external edge states pertaining to the chosen DWBC. This 
generates APMs with entries $0$, $1$ and $-1$ only (corresponding to the first two columns in the figure above)
equal to those of the corresponding ASMs.

Alternatively, we may understand the sixth root of unity weights as a weighting of the turns taken
by individual paths, with the rule that the total weight at a vertex is the {\it sum} of the turning weights of all the 
paths visiting it. 
More precisely, the expression $-\omega h+\omega^2 v$ for vertex weights may be understood as the sum 
over all two-step paths $p=(p_{in},p_{out})$ visiting the vertex of turning weights equal to the variation ${\rm turn}(p):=\eta(p_{out})-\eta(p_{in})$ of an edge variable $\eta$ 
with the following dictionary:
$$
\raisebox{-.6cm}{\hbox{\epsfxsize=5.cm \epsfbox{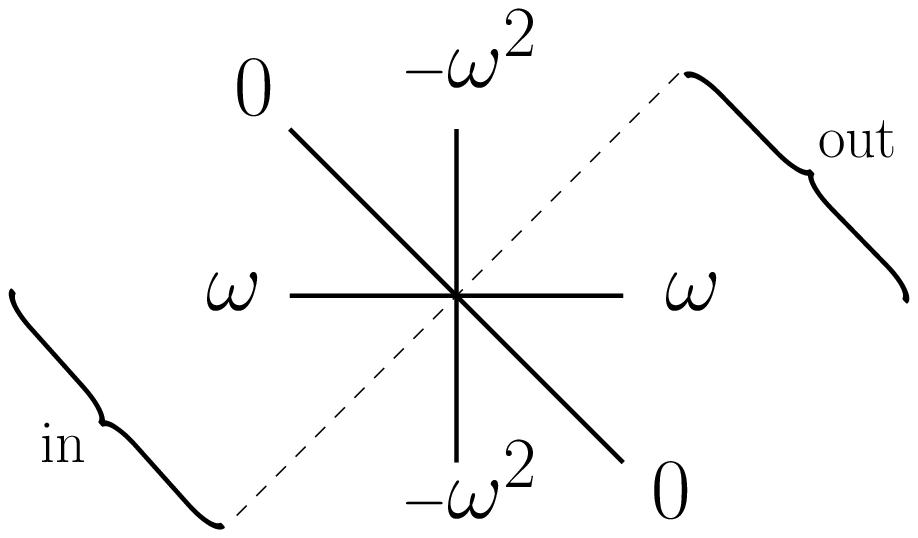}}}
$$
where we have indicated the value of $\eta$ for each in/out edge.
With this dictionary, a path which does not turn (transmitter direction) receives the turning weight $0$
while we have the following assignment of turning weight for respectively horizontal, diagonal and 
vertical incoming paths (we have indicated the direction of travel of each path by a straight arrow):
\begin{eqnarray}
&&\raisebox{-.6cm}{\hbox{\epsfxsize=12.cm \epsfbox{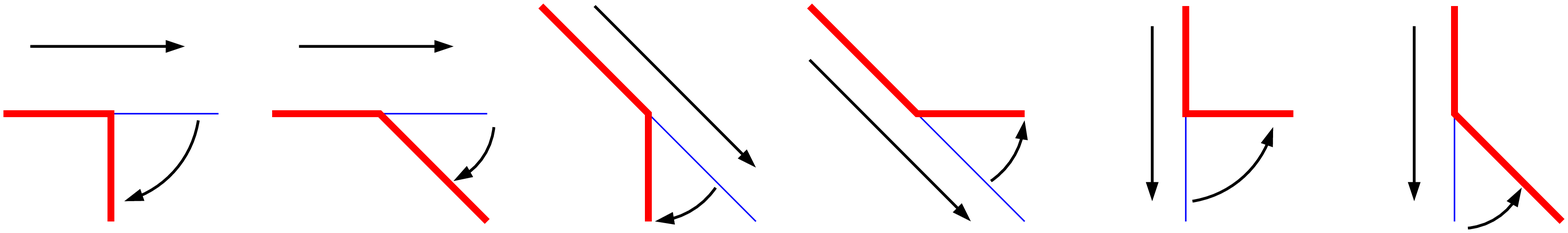}}}\nonumber \\
&&\hskip .8cm 1 \hskip 1.5cm -\omega \hskip 1.3cm -\omega^2 \hskip 1.6cm \omega \hskip 1.55cm -1\hskip 1.75cm \omega^2 \label{turning}
\label{eq:turning}
\end{eqnarray}
It is a straightforward exercise to check that all the weights of \eqref{apmsixth} are indeed the sums
of turning weights of all their two-step paths (this is indeed guaranteed by the mapping $(h,v,d)\mapsto -\omega h+\omega^2 v$). 
For instance, the fifth weight from left on the bottom line is $1+\omega=-\omega^2$, namely equal to the sum of the first and fourth 
weights on \eqref{eq:turning}. Moreover, a useful consequence of the definition is that the turning weights of a given path $p=(p_0,p_1,\ldots, p_k)$ add up along the path
into a telescopic sum equal to 
\begin{equation}
{\rm turn}(p):=\sum_{\ell=1}^k {\rm turn}((p_{\ell-1},p_\ell))=
\eta(p_k)-\eta(p_0)={\rm turn}((p_0,p_k))
\label{eq:turndef}
\end{equation}
depending only on the orientations of its first and last edges.

\subsection{APM of type 1, 2 and 3:  definitions and properties}

The matrix triple entries $(h,v,d)$ coming from 20V configurations are further constrained by reflection/transmission properties along the horizontal, vertical and diagonal directions.
In all three cases of DWBC1,2,3 all the external horizontal arrows point towards the grid and all the external vertical arrows point away from it. If we follow any horizontal line from left to right, the first arrow on the West side points to the right,
and must be reflected at least once as it points left when it exits on the East side. It must in fact be reflected an odd number
of times. The corresponding entries $h_j$ of the triples associated to the vertices $j=1,2,...,n$ visited from left to right
must therefore alternate between $1,-1,1,..,1$ whenever they are non-zero.
The same reasoning for the vertical directions leads, from top to bottom, to an alternation of the entries $v_i$ in each column 
between $-1,1,-1,...,-1$ whenever non-zero. 

Finally a last  alternance condition holds along the diagonal directions, but is different for DWBC1,2 and 3.
For DWBC3, as all external diagonal edges are empty, the entries $d_i$ along each diagonal visited from 
top to bottom, must alternate between $-1,1,-1,...,1$ when they are non-zero: note that there is indeed 
always an even, possibly zero number of reflectors as the arrows at both ends point towards Northwest.
For DWBC1, recall that the external diagonal edges are occupied in the lower triangular part and empty in the strictly upper triangular part of the $n\times n$ grid. As a consequence, the entries $d_i$ 
(labeled by, say the row index $i$ in increasing order) along each diagonal 
in the strictly upper triangular part obey the same rule as in DWBC3, namely alternate between $-1,1,-1,...,1$ when they are non-zero, but the entries $d_i$ along each diagonal 
in the lower triangular part obey the opposite rule, namely alternate between $1,-1,1,...,-1$ when they are non-zero.
Finally, for DWBC2 the entries $d_i$ along each diagonal 
in the upper triangular part alternate between $-1,1,-1,...,1$ when they are non-zero, and the entries $d_i$ along each diagonal 
in the strictly lower triangular part obey the opposite rule, namely alternate between $1,-1,1,...,-1$ when they are non-zero.

These rules determine entirely the sets of APM of type 1,2,3 whose definition is summarized below.
\begin{defn}
We define the sets of $n\times n$ Alternating Phase Matrices of types 1,2,3 as $n\times n$ arrays of triples
of the form $(h_{i,j},v_{i,j},d_{i,j})$ for $1\leq i,j \leq n$, where $h_{i,j},v_{i,j},d_{i,j}\in \{0,1,-1\}$ satisfy $h_{i,j}+v_{i,j}+d_{i,j}=0$ and are moreover subject to the following conditions for all types:
\begin{enumerate}
\item There is at least one non-zero variable $h_{i,j}$ in each row $i=1,2,...,n$, and at least one non-zero 
variable $v_{i,j}$ in each column $j=1,2,...,n$,
\item Along each row $i=1,2,...,n$, the non-zero variables $h_{i,j}$ must alternate between $1,-1,1,$ $...,1$ when $j$ ranges from $1$ to $n$,
\item Along each column $j=1,2,...,n$, the non-zero variables $v_{i,j}$ must alternate between $-1,1,-1,...,-1$ when $i$ ranges from $1$ to $n$,
\end{enumerate}
and to three different conditions $(4.1),(4.2),(4.3)$ corresponding to each type 1,2,3:
\begin{enumerate}
\item[(4.1)]  Along each diagonal\footnote{Here we label the diagonal from $1-n$ to $n-1$ from top to bottom, as opposed to the 
spectral parameter labelling $t_k$, $k=1,2,\ldots, 2n-1$ from bottom to top. The correspondence is $k=n-\ell$.} $\ell\in [1-n,n-1]$, the non-zero variables $d_{i,i+\ell}$ 
must alternate between
$-1,1,-1,...,1$ if $\ell>0$ and $1,-1,1,...,-1$ if $\ell\leq 0$ when $i$ ranges from ${\rm Max}(1,1-\ell)$ to 
${\rm Min}(n,n-\ell)$.
\item[(4.2)]  Along each diagonal $\ell\in [1-n,n-1]$, the non-zero variables $d_{i,i+\ell}$ 
must alternate between
$-1,1,-1,...,1$ if $\ell\geq 0$ and $1,-1,1,...,-1$ if $\ell< 0$ when $i$ ranges from ${\rm Max}(1,1-\ell)$ to 
${\rm Min}(n,n-\ell)$.
\item[(4.3)] Along each diagonal $\ell\in [1-n,n-1]$, the non-zero variables $d_{i,i+\ell}$ 
must alternate between
$-1,1,-1,...,1$ when $i$ ranges from ${\rm Max}(1,1-\ell)$ to 
${\rm Min}(n,n-\ell)$.
\end{enumerate}
\end{defn}

APM entries are expressible ubiquitously in terms of either the above defining triples, or 
equivalently zero or sixth roots of unity
according to the dictionaries (\ref{apmtriples}-\ref{apmsixth}).

\begin{prop}
The $n\times n$ APM of types 1,2,3 are in bijection with respectively the 20V-DWBC1,2,3 
on an $n\times n$ grid.
\end{prop}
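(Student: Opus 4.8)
The natural candidate is the map $\Phi$ that sends a 20V-DWBC$m$ configuration to the array of triples $(h_{i,j},v_{i,j},d_{i,j})$ built vertex by vertex by the transmitter/reflector rule. The plan is to show that $\Phi$ is well defined, injective, and surjective onto the type-$m$ APM. Well-definedness is largely already established in the text preceding the Definition: at a single vertex the ice rule is equivalent to $h+v+d=0$, since a transmitter direction contributes equally to in- and out-degree, while a reflector pointing in (resp.\ out) contributes $+2$ (resp.\ $-2$) to the quantity (in-degree $-$ out-degree), so that this difference equals $2(h+v+d)$ and vanishes precisely when $h+v+d=0$; this simultaneously recovers the seven admissible triples. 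The alternation conditions (1),(2),(3) and (4.$m$) are then exactly the row, column and diagonal constraints read off from the boundary prescriptions as in that discussion, so $\Phi$ does map into the set of type-$m$ APM.

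Injectivity follows from the observation that the whole arrow configuration can be reconstructed from the triple array together with the boundary data. Each edge of the grid lies on exactly one oriented line (horizontal, vertical, or diagonal), and along each such line the transmitter/reflector pattern recorded by the triples, read off starting from the anchored boundary edge, determines every edge orientation by propagation: a transmitter preserves the current direction and a reflector reverses it. Since the orientation of each edge is thereby recovered, distinct 20V configurations yield distinct triple arrays, and $\Phi$ is injective.

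For surjectivity I would run the same propagation in reverse. Starting from an arbitrary type-$m$ APM, orient each line by fixing its boundary edge as prescribed and flipping the direction exactly at the reflectors recorded along that line. Because the three families of lines partition the edge set, this assigns a unique orientation to every edge with no possible conflict between families. Three facts must then be verified: (i) every reflector sign is consistent with the reconstructed arrows, i.e.\ a vertex marked $+1$ in a given direction really has both incident edges pointing inward and a vertex marked $-1$ has them pointing outward; (ii) the exit boundary condition at the far end of each line holds; and (iii) the ice rule holds at each vertex. Point (iii) is immediate from $h+v+d=0$. Point (i) holds because the first nonzero entry along each line has the sign forced by the incoming boundary arrow (namely $+1$ for horizontal lines and $-1$ for vertical lines, by conditions (2) and (3)), after which the prescribed alternation keeps every subsequent reflector sign synchronized with the propagated direction. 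Point (ii) reduces to the parity of the number of reflectors along each line, which is encoded in the prescribed final sign in (2),(3),(4.$m$), together with the nonemptiness condition (1) that forces an odd (hence nonzero) count for horizontal and vertical lines. Thus the reconstructed configuration is a genuine 20V-DWBC$m$ configuration mapping to the given APM under $\Phi$.

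The main obstacle is the combined check (i)--(ii) for surjectivity: one must verify, separately in the three directions and for each of the three boundary types, that the precise starting sign and parity built into (2),(3),(4.$m$) match the incoming and outgoing boundary orientations. The horizontal and vertical verifications are uniform across all three models, but the diagonal conditions (4.1),(4.2),(4.3) are where the three models genuinely differ and where the case analysis is delicate: for DWBC3 every diagonal is anchored by two NW-pointing boundary arrows, forcing an even (possibly zero) number of reflectors and hence no nonemptiness requirement, whereas for DWBC1 and DWBC2 the occupied corner and triangular-half diagonal edges flip the starting sign on one side of the main diagonal, which is precisely the asymmetry recorded in (4.1) and (4.2). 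Matching these boundary signs in each triangular half is the only subtle point of the argument.
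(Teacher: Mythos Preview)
Your argument is correct and is precisely the natural formalization of what the paper does: the paper states the proposition without a separate proof, regarding the bijection as following directly from the transmitter/reflector construction and the alternation discussion that precedes the Definition. Your decomposition into well-definedness, injectivity by line-by-line propagation, and surjectivity by reconstructing the arrow configuration from the triple array is exactly that implicit argument made explicit, including the identification of the diagonal case split (4.1)/(4.2)/(4.3) as the only place where the three types genuinely differ. One small organizational remark: your point (iii) is not quite ``immediate'' on its own but rather follows from (i), since the equivalence between the ice rule and $h+v+d=0$ requires that the reconstructed reflector signs agree with the recorded ones; as you do establish (i), this is only a matter of ordering and not a gap.
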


\begin{example}
As a illustration the $5\times 5$ APM of types 1,2 and 3 corresponding to the configurations depicted respectively in Figs.~\ref{fig:DWBC1},
\ref{fig:DWBC2} and \ref{fig:grid3} read:
\begin{eqnarray*}
\raisebox{-2.cm}{\hbox{\epsfxsize=4.cm \epsfbox{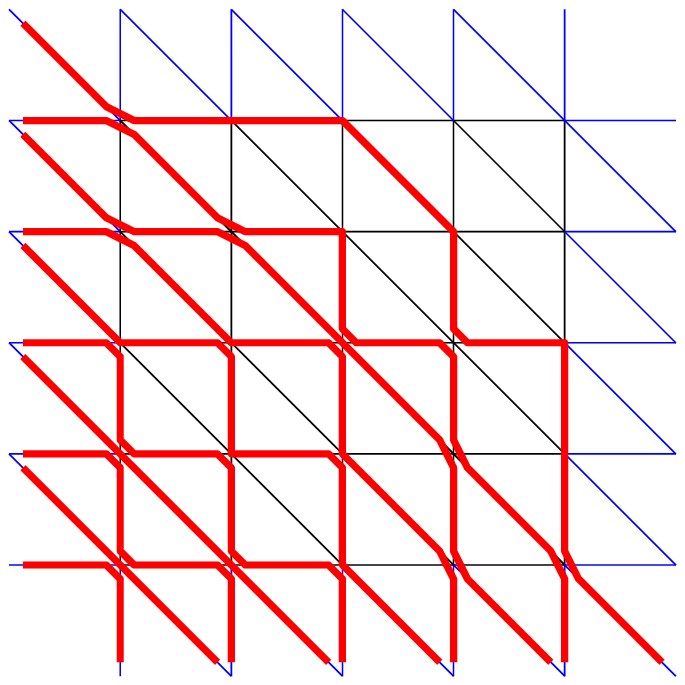}}}
&\mapsto& \begin{pmatrix}
0 &0 &  -\omega & 0 & 0 \\
0 & 0 & 1 &  -\omega^2 & 0 \\
 -\omega^2 &  -\omega^2 & 0 & 0 & 1 \\
0 & 0 & -\omega & 0 & 0\\
0 &0 & -\omega & 0 & 0
\end{pmatrix} \\
\raisebox{-2.cm}{\hbox{\epsfxsize=4.cm \epsfbox{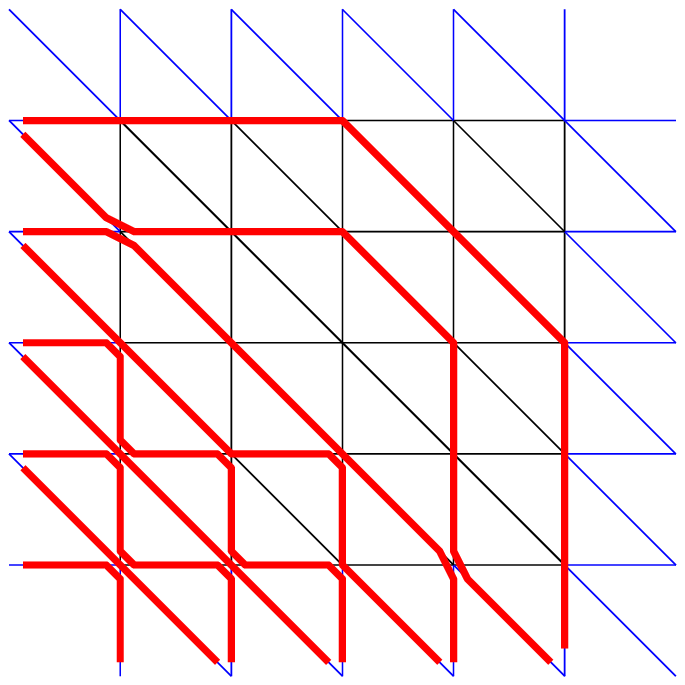}}}
&\mapsto& \begin{pmatrix}
0 &0 &  -\omega & 0 & 0 \\
0 & 0 &  -\omega & 0 &0 \\
1 &  0 & 0 & -\omega^2 & -\omega^2 \\
 0 & -\omega^2 & 1 & 0 & 0\\
0 &0 & -\omega & 0 & 0
\end{pmatrix} \\
\raisebox{-2.cm}{\hbox{\epsfxsize=4.cm \epsfbox{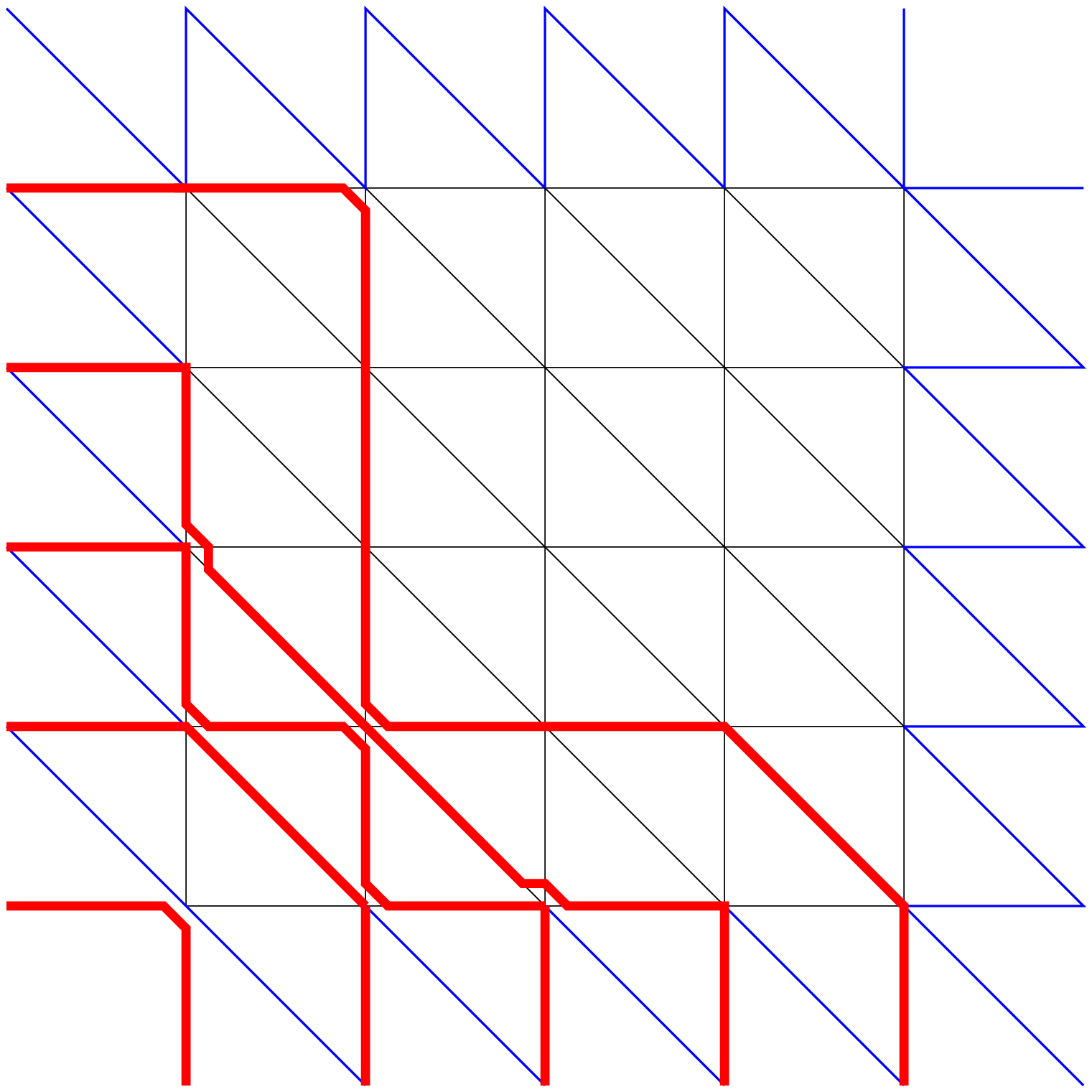}}}
&\mapsto& \begin{pmatrix}
0 &1 & 0 & 0 & 0 \\
1 & 0 & 0 & 0 & 0 \\
-\omega & 0 & 0 & 0 & 0 \\
\omega^2 & 0 & 0 & -\omega & 0\\
1 & \omega & -\omega^2 & 1 & -\omega^2
\end{pmatrix}
\end{eqnarray*}
We note that the first and second APMs of respective types 1 and 2 are exchanged under a rotation 
by $180^\circ$, which matches the fact that the corresponding 20V configurations are also interchanged
under the same transformation.
\end{example}

We conclude with a simple property satisfied by all APMs introduced so far.

\begin{prop}\label{sumrule}
APMs $A=(a_{i,j})_{1\leq i,j \leq n}$ of any type 1, 2 or 3, when expressed in terms of sixth roots of 
unity, have the following 
property:
$$ \sum_{i,j=1}^n a_{i,j} = n\ . $$
\end{prop}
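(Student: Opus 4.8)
The plan is to exploit the explicit bijective coding of the APM entries by the pair of sign patterns $(h_{i,j},v_{i,j})$, which reduces the statement to two elementary one-dimensional alternation counts. Recall from the dictionary (\ref{apmtriples}--\ref{apmsixth}) that each entry is $a_{i,j}=-\omega\,h_{i,j}+\omega^2\,v_{i,j}$, so that by linearity
$$\sum_{i,j=1}^n a_{i,j}=-\omega\sum_{i,j=1}^n h_{i,j}+\omega^2\sum_{i,j=1}^n v_{i,j}.$$
It therefore suffices to evaluate the two integer sums $\sum_{i,j} h_{i,j}$ and $\sum_{i,j} v_{i,j}$ separately. Crucially, these depend only on the row/column alternation conditions (1)--(3), which are common to all three types; the diagonal conditions (4.1)--(4.3) never enter, and this is precisely why the result is type-independent.

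First I would treat the horizontal variables row by row. Fix a row $i$. By condition (1) there is at least one nonzero $h_{i,j}$, and by condition (2) the nonzero entries, read from left to right, alternate as $1,-1,1,\dots,1$, \emph{beginning and ending with} $+1$. Hence their number is odd and the alternating sum collapses to $+1$, giving $\sum_{j=1}^n h_{i,j}=1$ for every $i$, and therefore $\sum_{i,j} h_{i,j}=n$. Symmetrically, I would treat the vertical variables column by column: for fixed $j$, conditions (1) and (3) force the nonzero $v_{i,j}$ to alternate as $-1,1,-1,\dots,-1$ from top to bottom, again an odd number of terms summing to $-1$, so $\sum_{i=1}^n v_{i,j}=-1$ and $\sum_{i,j} v_{i,j}=-n$.

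Finally I would assemble the two counts. Substituting into the displayed identity yields
$$\sum_{i,j=1}^n a_{i,j}=-\omega\cdot n+\omega^2\cdot(-n)=-n(\omega+\omega^2)=n,$$
where the last equality uses $1+\omega+\omega^2=0$. I do not expect a genuine obstacle here: the only point requiring care is the observation that the prescribed patterns in conditions (2) and (3) pin down \emph{both} the first and the last nonzero entry of each row (respectively column) to the \emph{same} sign, which forces an odd count and hence a clean $\pm1$ for each one-dimensional sum. Everything else reduces to linearity and the relation $\omega+\omega^2=-1$.
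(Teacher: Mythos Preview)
Your proof is correct and is genuinely different from the paper's argument. The paper proves Proposition~\ref{sumrule} via the turning-weight interpretation: it writes $\sum_{i,j}a_{i,j}=\sum_p \mathrm{turn}(p)$ as a sum over the osculating Schr\"oder paths $p$, uses the telescoping identity $\mathrm{turn}(p)=\eta(p_k)-\eta(p_0)$ to reduce each path's contribution to its boundary data, and then observes that in all three types there are exactly $n$ paths going from a horizontal West edge to a vertical South edge (each contributing $-\omega^2-\omega=1$) while any diagonal-to-diagonal paths contribute $0$.

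Your route bypasses the path picture entirely and works directly from the algebraic encoding $a_{i,j}=-\omega h_{i,j}+\omega^2 v_{i,j}$ together with the row/column alternation conditions (1)--(3) of the definition. This is more elementary and has the virtue of making the type-independence completely transparent from the outset: you never touch the diagonal data $d_{i,j}$ or the conditions (4.1)--(4.3) that distinguish the three types. The paper's proof, by contrast, has to argue separately (if briefly) that the inventory of path endpoints is the same across the three boundary prescriptions. On the other hand, the turning-weight approach is what the paper reuses for the refined row/column/diagonal sum rules of Proposition~\ref{magic} in the DWBC4 case, so it earns its keep there; your purely algebraic method would also handle Proposition~\ref{propzer} (since for type~4 the alternation patterns in conditions (1)--(2) both start and end on the same sign but now give even counts, hence row and column sums vanish), but the finer statements of Proposition~\ref{magic} are more naturally seen through the path-turning lens.
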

\begin{proof}
We use the interpretation of weights in terms of turning weights for the paths \eqref{turning}. For each path $p$
in the osculating Schr\"oder path configuration, recall the quantity 
${\rm turn}(p)$ \eqref{eq:turndef} defined as  
the sum over its turning weights. Then it is clear that the desired quantity is
$\sum_{i,j=1}^n a_{i,j}=\sum_p {\rm turn}(p)$, where the sum extends over all the paths in the configuration.
Next, we note that irrespectively of the type of DWBC,
all paths have the following property: 
\begin{enumerate}
\item each path starting with a horizontal external edge on the West boundary
ends with a vertical edge on the South boundary (call these HV paths). From \eqref{eq:turndef}, these paths have a turning weight ${\rm turn}(p)=-\omega^2-\omega=1$.
\item each path starting with a diagonal external edge on the West boundary
ends with a diagonal edge on the South boundary.  From \eqref{eq:turndef}, these paths have a turning weight ${\rm turn}(p)=0-0=0$.
\end{enumerate}
The Proposition follows from the fact that there are always exactly $n$ HV paths in all configurations.
\end{proof}

\subsection{Other APMs}

Using the same dictionaries as in the previous sections, we now define APM of type 4 corresponding
to 20V-DWBC4 of Section \ref{sec:oth}. 

\begin{defn}
We define APM of type 4 as $n\times n$ arrays of triples
of the form $(h_{i,j},v_{i,j},d_{i,j})$ for $1\leq i,j \leq n$, where 
$h_{i,j},v_{i,j},d_{i,j}\in \{0,1,-1\}$ and $h_{i,j}+v_{i,j}+d_{i,j}=0$, moreover subject to the following conditions:
\begin{enumerate}
\item Along each row $i=1,2,...,n$, the non-zero variables $h_{i,j}$ must alternate between $1,-1,1,$ $...,-1$ 
when $j$ ranges from $1$ to $n$,
\item Along each column $j=1,2,...,n$, the non-zero variables $v_{i,j}$ must alternate between $1,-1,1,...,-1$ 
when $i$ ranges from $1$ to $n$,
\item Along each diagonal $\ell\in [1-n,n-1]$, the non-zero variables $d_{i,j}$ must alternate between $-1,1,-1,...,1$ 
when $i$ ranges from ${\rm Max}(1,1-\ell)$ to 
${\rm Min}(n,n-\ell)$.
\end{enumerate}
\end{defn}

\begin{prop}
The $n\times n$ APM of type 4 are in bijection with the configurations of the 20V-DWBC4 model on an $n\times n$ grid.
\end{prop}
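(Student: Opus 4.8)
The plan is to mirror the arguments already used for the Proposition on types 1, 2 and 3, changing only the input boundary data to those of DWBC4. I would first recall the vertex-to-triple dictionary: to a 20V configuration one assigns, at each inner vertex $(i,j)$, the triple $(h_{i,j},v_{i,j},d_{i,j})\in\{0,1,-1\}^3$ recording whether the vertex transmits ($0$) or reflects the horizontal, vertical and diagonal directions, the reflected value being $+1$ when the pair of arrows points inwards and $-1$ when it points outwards. The first thing to check is that this assignment is well defined on all $20$ admissible vertices and that $h_{i,j}+v_{i,j}+d_{i,j}=0$ always holds. Writing $t,p,m$ for the numbers of transmitted, inward-reflected and outward-reflected directions at a vertex, the numbers of incoming and outgoing edges are $t+2p$ and $t+2m$; the ice rule (three in, three out) is therefore \emph{equivalent} to $p=m$, i.e.\ to $h_{i,j}+v_{i,j}+d_{i,j}=0$, and it forces $(t,p,m)\in\{(3,0,0),(1,1,1)\}$, recovering exactly the seven admissible triples.

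Next I would establish the bijection by exhibiting the inverse map, reconstructing the arrows from a triple array. Since every inner horizontal (resp.\ vertical, diagonal) edge lies on a unique horizontal (resp.\ vertical, diagonal) line terminating on the boundary, and since DWBC4 fixes the orientation of every external edge, one can trace each line inward from its known boundary arrow: at each vertex the relevant entry of the triple either preserves the arrow ($0$) or flips it ($\pm1$), so the orientation of every inner edge is determined uniquely. By the local computation above, the reconstructed configuration satisfies the ice rule at each vertex precisely because $h_{i,j}+v_{i,j}+d_{i,j}=0$. Thus the two maps, configuration $\mapsto$ triple array and triple array $\mapsto$ configuration, are mutually inverse, provided we identify which triple arrays are reachable; the reachability constraints are exactly the alternation rules (1)--(3), derived below.

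The core step is the line-by-line derivation of conditions (1)--(3) from the DWBC4 boundary orientations, where the only difference from the earlier proposition is the boundary data: all horizontal and vertical external edges are path-occupied while all diagonal external edges are empty. Tracing a horizontal line from West to East, the incoming West arrow points East; transmitters preserve the direction and reflectors flip it, so once nonzero the entries $h_{i,j}$ alternate $+1,-1,+1,\dots$, the first reflector being inward ($+1$) because the arrow reaches it pointing East. The DWBC4 East boundary arrow also points East, so the traversal returns to its initial orientation, forcing an even number of reflectors and hence the terminal pattern $1,-1,1,\dots,-1$ of condition (1). The same bookkeeping on a vertical line, both of whose endpoints point South, yields condition (2). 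For a diagonal line, both external edges are empty and hence point NW, so the first reflector is outward ($-1$), the signs alternate $-1,+1,-1,\dots$, and equal endpoint orientations again force an even count, giving the pattern $-1,1,-1,\dots,1$ of condition (3). Conversely, any triple array satisfying (1)--(3) traces back to external arrows matching DWBC4 on all four sides, so it is realized by a unique configuration; note also that an all-transmitting line is allowed on each side, consistent with the absence of any ``at least one nonzero entry'' requirement in the type 4 definition.

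I expect the main obstacle to be purely bookkeeping: pinning down the exact starting sign and parity of each alternation, that is, translating ``inward''/``outward'' reflection consistently across the three directions and converting the DWBC4 endpoint orientations correctly. In particular the diagonal case is the one that genuinely departs from types 1, 2, 3 — there all external diagonal edges are empty and point NW at \emph{both} ends, producing a single uniform rule $-1,1,\dots,1$ rather than the triangular-split rules (4.1)/(4.2) — and I would take care to verify that the two coincident diagonal endpoints indeed enforce an even number of reflectors. Everything else is a routine transcription of the type 1, 2, 3 argument.
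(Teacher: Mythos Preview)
Your proposal is correct and follows essentially the same route as the paper: the paper does not give a separate proof for this proposition but relies on the same line-by-line transmission/reflection argument laid out for types 1,2,3, adapted to the DWBC4 external orientations (all horizontal and vertical external edges occupied, all diagonal ones empty), which yields precisely the even-count alternations $1,-1,\dots,-1$, $1,-1,\dots,-1$, $-1,1,\dots,1$ and no ``at least one non-zero'' clause. Your $t,p,m$ bookkeeping for the equivalence of the ice rule with $h+v+d=0$ is in fact slightly cleaner than the paper's verbal justification.
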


In particular, the zero matrix is an APM of type 4, which corresponds to all vertices
being transmitters. The APM of type 4 are indeed very different from those of types 1,2,3: we note in particular
that ASM do not form a subset of APM of type 4, as the DWBC4
is incompatible with the DWBC of the underlying square lattice leading to ASM\footnote{This echoes the previous remark
that DWBC4 are not really domain wall inducing.}. Moreover, Proposition \ref{sumrule} no longer holds, and is replaced by
\begin{prop}\label{propzer}
For Any APM $A=(a_{i,j})_{1\leq i,j\leq n}$ of type 4, when expressed in terms of sixth roots of unity, we have the following 
property:
$$\sum_{i,j=1}^n a_{i,j} =0$$
\end{prop}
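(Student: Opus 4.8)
The plan is to recycle the mechanism behind Proposition~\ref{sumrule}: use the interpretation of the APM entries as turning weights of the osculating paths, together with the telescoping identity \eqref{eq:turndef}, and simply re-examine how the DWBC4 boundary differs from that of DWBC1,2,3. First I would write the target quantity as the total turning of the configuration,
\[
\sum_{i,j=1}^n a_{i,j}=\sum_{p}\,{\rm turn}(p),
\]
the sum being over all osculating paths $p$. By \eqref{eq:turndef} each summand equals $\eta(\text{last edge of }p)-\eta(\text{first edge of }p)$ and so depends only on the two boundary edges at which $p$ begins and ends. Since the paths are monotone (from NW to SE), there are no closed components: every path starts at a unique occupied boundary in-edge and ends at a unique occupied boundary out-edge, and conversely each occupied boundary edge is an endpoint of exactly one path. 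Hence the per-path sum telescopes globally into a purely boundary expression,
\[
\sum_{p}\,{\rm turn}(p)=\sum_{\text{out-edges}}\eta-\sum_{\text{in-edges}}\eta,
\]
where in-edges (resp.\ out-edges) are the occupied external edges through which paths enter (resp.\ leave) the grid.

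Next I would read off the boundary data for DWBC4. With the natural orientations (horizontal $W\to E$, vertical $N\to S$, diagonal $NW\to SE$), the occupied in-edges are the $n$ horizontal West edges and the $n$ vertical North edges, while the occupied out-edges are the $n$ horizontal East edges and the $n$ vertical South edges, and no diagonal external edge is occupied. The key point is that $\eta$ is a function of the \emph{edge orientation alone}: all horizontal edges share a common value $\eta_{\rm h}$ and all vertical edges a common value $\eta_{\rm v}$ (these common values can be read off from the dictionary preceding \eqref{eq:turning}, but are immaterial for the argument). Therefore both the in-edge sum and the out-edge sum are equal to $n\,\eta_{\rm h}+n\,\eta_{\rm v}$, and their difference vanishes, which is exactly the assertion $\sum_{i,j}a_{i,j}=0$.

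The genuinely delicate point, and the only place where the computation departs from Proposition~\ref{sumrule}, is the bookkeeping of in-edges versus out-edges. For DWBC1,2,3 the occupied boundary consisted of West horizontal in-edges paired with South vertical out-edges (together with matching diagonal in/out pairs that each contribute $0$), so the telescoped difference collapsed to $n(\eta_{\rm v}-\eta_{\rm h})=n$; for DWBC4 the West and East horizontal edges, and likewise the North and South vertical edges, are occupied symmetrically, so the in- and out-contributions match orientation by orientation and cancel. I would therefore take care to justify explicitly that the occupied boundary in- and out-edges carry the same multiset of edge orientations ($n$ horizontal and $n$ vertical each), which is what forces the two boundary sums to coincide; the monotonicity of the paths (ruling out closed loops and guaranteeing each path is counted once) is the supporting fact that makes the global telescoping legitimate.
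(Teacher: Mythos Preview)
Your proof is correct and uses the same core idea as the paper: interpret $\sum_{i,j} a_{i,j}$ as the total turning weight of the osculating path configuration and collapse each path's contribution via the telescoping identity \eqref{eq:turndef} to a boundary expression depending only on the first and last edge orientations.

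The organisation differs slightly from the paper's own argument. The paper first identifies the in/out pairing forced by non-crossing (each of the $n$ paths entering on the North boundary must exit on the East, contributing ${\rm turn}=\omega-(-\omega^2)=-1$; each of the $n$ paths entering on the West exits on the South, contributing $-\omega^2-\omega=+1$) and then sums to get $n-n=0$. You instead sum $\eta$ over all in-edges and over all out-edges separately and observe that both multisets consist of $n$ horizontal and $n$ vertical edges, so the two sums coincide regardless of which path connects to which. Your version has the mild advantage that it never needs the pairing claim (which the paper asserts without spelling out the non-crossing justification); on the other hand, the paper's version makes the per-path contributions $\pm 1$ explicit, which is pleasant for comparison with Proposition~\ref{sumrule}.
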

\begin{proof}
We use the same argument as in the proof of Proposition \ref{sumrule}. All $n$ paths entering from the top
must exit on the right. Each of them has a turning weight $\omega-(-\omega^2)=-1$. All $n$ paths entering from the left
must exit on the bottom. Each of them has turning weight $-\omega^2-\omega=+1$. The total turning weight is therefore $n-n=0$.
\end{proof}

We also have the following refined sum rules:
\begin{prop}\label{magic}
For Any APM $A=(a_{i,j})_{1\leq i,j\leq n}$ of type 4, when expressed in terms of sixth roots of unity, we have the following 
properties:
\begin{equation*}
\sum_{j=1}^n a_{i,j} \in \omega^2\, \Z, \qquad
\sum_{i=1}^n a_{i,j} \in \omega\, \Z , \qquad \sum_{i={\rm Max}(1,1-\ell)}^{{\rm Min}(n,n-\ell)} a_{i,i+\ell} \in  \Z \ .
\end{equation*}
\end{prop}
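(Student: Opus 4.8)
The plan is to exploit the linear coding $a_{i,j} = -\omega\, h_{i,j} + \omega^2 v_{i,j}$ of Section~\ref{sec:apm}, together with the constraint $h_{i,j}+v_{i,j}+d_{i,j}=0$, so that every assertion reduces to a statement about the \emph{integer} partial sums of the triple-entries $h$, $v$, $d$. Since each of the three target quantities is $\Z$-linear in these integer entries, the only thing I need to extract from the alternation conditions is that one appropriate partial sum \emph{vanishes} in each case, after which the congruence modulo $\omega$, $\omega^2$ or $1$ will be immediate.

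First I would record the three vanishing sums forced by the type-4 alternation rules. By condition~(1), along each fixed row $i$ the non-zero $h_{i,j}$ form the pattern $1,-1,\dots,-1$, which begins with $+1$ and ends with $-1$; the $k$-th non-zero entry has sign $(-1)^{k+1}$, so an ending sign $-1$ forces an even number of them, whence $\sum_{j=1}^n h_{i,j}=0$. The same reasoning applied to condition~(2) gives $\sum_{i=1}^n v_{i,j}=0$ in each column, the non-zero $v_{i,j}$ again occurring in a sign-cancelling even-length alternation $1,-1,\dots,-1$. Finally, condition~(3) gives along each diagonal the non-zero pattern $-1,1,\dots,1$, again of even length, so that $\sum_i d_{i,i+\ell}=0$ where $i$ runs over $[\mathrm{Max}(1,1-\ell),\mathrm{Min}(n,n-\ell)]$; the empty-row, empty-column and empty-diagonal cases are covered trivially by the same formula.

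It then remains to assemble the congruences. For the row sum, $\sum_{j=1}^n a_{i,j} = -\omega\sum_j h_{i,j} + \omega^2\sum_j v_{i,j} = \omega^2\sum_j v_{i,j}$, and since $\sum_j v_{i,j}\in\Z$ this lies in $\omega^2\Z$. For the column sum, symmetrically $\sum_{i=1}^n a_{i,j} = -\omega\sum_i h_{i,j}\in\omega\Z$. For the diagonal sum I would first eliminate $v$ using $v=-h-d$ and $1+\omega+\omega^2=0$ to rewrite the weight as $a_{i,j}=h_{i,j}-\omega^2 d_{i,j}$; then $\sum_i a_{i,i+\ell} = \sum_i h_{i,i+\ell} - \omega^2\sum_i d_{i,i+\ell} = \sum_i h_{i,i+\ell}\in\Z$. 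The only step requiring any thought is this last rewriting of the weight into the form in which the diagonal-alternation sum is the one that cancels; the row and column statements follow directly from the original coding. No genuine obstacle arises beyond verifying that each alternation pattern has even length, which is precisely the content of the prescribed endpoints of the $\pm1$ sequences in the type-4 definition.
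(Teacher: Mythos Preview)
Your proof is correct and takes a genuinely different, more algebraic route than the paper. The paper argues via the turning-weight interpretation of the entries: along a row it decomposes the horizontal occupied edges into segments, identifies the turning weights $\theta_j$ at the entry and exit points of each segment, and checks case by case that each pair $\theta_{2j-1}+\theta_{2j}$ lies in $\omega^2\Z$ (and then states that the column and diagonal cases follow analogously). You instead work directly from the triple encoding $a_{i,j}=-\omega\,h_{i,j}+\omega^2 v_{i,j}$ and observe that the type-4 alternation patterns $1,-1,\dots,-1$ (rows, columns) and $-1,1,\dots,1$ (diagonals) each have even length, forcing $\sum_j h_{i,j}=\sum_i v_{i,j}=\sum_i d_{i,i+\ell}=0$; the three congruences then drop out by simple linear algebra, the diagonal case requiring only the identity $a=h-\omega^2 d$ obtained from $v=-h-d$ and $1+\omega+\omega^2=0$. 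Your argument is shorter and avoids any appeal to the osculating-path picture; the paper's approach, by contrast, makes the geometric origin of the congruences visible and explains \emph{which} local path events contribute each unit of $\omega^2$ (respectively $\omega$, $1$).
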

\begin{proof}
We use the fomulation of the matrix entries $a_{i,j}$ as the sum of turning weights of all paths visiting the vertex $i,j$.
Focussing on a given row $i$ of the matrix, the horizontal occupied edges form a  union of segments of edges
belonging each to a different path:
$[a_0,a_1]$, $[a_2,a_3]$,..., $[a_{2k-2},a_{2k-1}]$ with $k>1$, 
$a_0=0\leq a_1\leq \cdots \leq a_{2k-2}\leq a_{2k-1}=n+1$.
Inside each segment all vertices are transmitters, with turning weights 0. All the junctures $a_1,a_2,...,a_{2k-2}$
are turning points. Inspecting the possibilities of local configurations of paths through these points,
there are two possibilities of exiting the row at each $a_j$ for odd $j<2k-1$:
\begin{eqnarray*}
&&\raisebox{-.6cm}{\hbox{\epsfxsize=5.cm \epsfbox{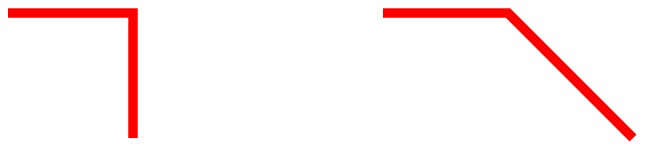}}} \\
&&\hskip .6cm 1 \hskip 1.5cm  \hskip 1.3cm -\omega \hskip 1.6cm \hskip 1.55cm\hskip 1.75cm 
\end{eqnarray*}
and two possibilities to enter the row at each $a_j$ for even $j>0$:
\begin{eqnarray*}
&&\raisebox{-.6cm}{\hbox{\epsfxsize=5.cm \epsfbox{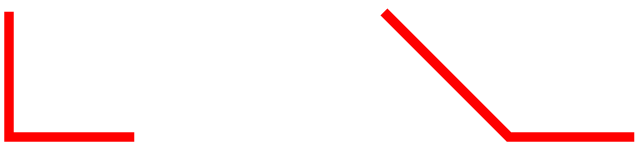}}} \\
&&\hskip .3cm -1 \hskip 1.5cm  \hskip 1.3cm \omega \hskip 1.6cm \hskip 1.55cm\hskip 1.75cm 
\end{eqnarray*}
Let $\theta_j$ denote the corresponding turning weight, with $\theta_j\in \{1,-\omega\}$ for odd $j$ and $\theta_j\in \{ -1, \omega\}$
for even $j$.
Summing over all turning weights along the row $i$ gives $\sum_{j=1}^n a_{i,j}= \sum_{j=1}^{k-1} (\theta_{2j-1}+\theta_{2j})$: this
includes double and also
triple osculations, as the latter must have a transmitter diagonal which does not affect the turning weight.
The quantity $\theta_{2j-1}+\theta_{2j}$ may take only the values: $1-1=0$, $1+\omega=-\omega^2$, $-\omega-1=\omega^2$ and $-\omega+\omega=0$, 
all in $\omega^2 \Z$, 
and the first assertion of the Proposition follows. The second and third ones follow from a similar argument.
\end{proof}

\begin{example}
As a illustration, the $6\times 6$ APM of type 4 corresponding to the configuration on the 
left in Fig.~\ref{fig:other}  reads:
$$
\raisebox{-2.cm}{\hbox{\epsfxsize=4.cm \epsfbox{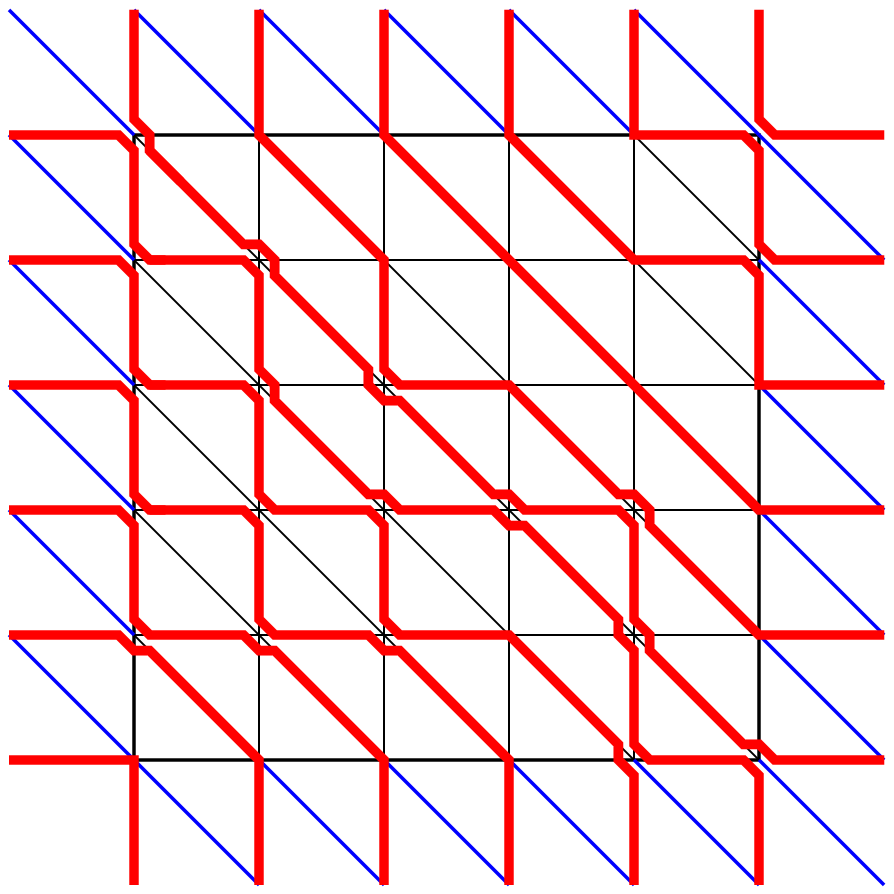}}}
\mapsto \begin{pmatrix}
 -\omega &\omega^2 &  \omega^2 & \omega^2 & -1 & 0\\
0 & 1 & -\omega^2 &  0 &  \omega & 0 \\
0 &  -\omega & -1 & -\omega & 0 & -1 \\
0 & 0 & -\omega^2 & 0 & 1 & \omega\\
\omega^2 &\omega^2 & \omega^2 & -\omega & 0 & \omega\\
1 &-\omega^2 & -\omega^2 & -\omega^2 & \omega & -\omega^2
\end{pmatrix}
$$
The sum rules of Props.\ref{propzer} and \ref{magic} are easily checked. We find row sums: $4\omega^2$, $-2\omega^2$, $2\omega^2$, $-2\omega^2$, $3\omega^2$, $-5\omega^2$, 
column sums: $-2\omega$, $-2\omega$, $\omega$, $-2\omega$, $2\omega$, $3\omega$, 
and diagonal sums: $1$, $0$, $0$, $0$, $1$, $1$, $1$, $-1$,  $-2$, $-1$, $0$, each adding up to $0$.
\end{example}

\section{Discussion/Conclusion}
\label{sec:conclusion}

In this paper we have considered the two-dimensional ice model of statistical physics on the triangular 
lattice, the 20V model,  from a combinatorial point of view. In particular, we have defined analogs of 
the known DWBC for the 6V model, and investigated their possible 
combinatorial content, using as much as possible the underlying integrable structure of the models.

\subsection{DWBC1,2: summary and perspectives}
\label{sec:summary}
The first class of boundary conditions DWBC1,2 have displayed a remarkably rich combinatorial
content. We have shown in particular that the configurations of the models are equinumerous to
the domino tilings of a quasi-square Aztec-like domain with a central cross-shaped hole
and with quarter-turn symmetry. We also presented a refined version, in the same
spirit as the Mills-Robins-Rumsey conjecture \cite{tsscpp} for DPP, fully proved in \cite{BDFPZ1}.
Note that this coincidence of (refined) partition functions is still lacking a direct bijective interpretation.
However, such a canonical bijection is not known even for the ASM-DPP correspondence. 

The 20V model presents interesting new features compared to the 6V model. Its osculating path
version involves Schr\"oder paths with three different kinds of steps (horizontal, vertical, diagonal). 
Note that the {\it same} paths, but with a stronger non-intersecting condition are involved in the
description of domino tilings of the holey square.
It is easy to keep track say of the diagonal steps when dealing with a single path (with a weight $\gamma$
per diagonal step), as well as when dealing with families of such non-intersecting paths (see Ref.~\cite{DFG3}
for the problem of tiling Aztec rectangles with defects). The corresponding decoration is easy
to implement in the holey square tiling problem (where $\gamma$ is simply a weight for one kind of domino).
Unfortunately, it is easily checked that the partition
functions of the 20V-DWBC1,2 and that of domino tilings of the holey square no longer match for $\gamma\neq 1$.
Besides, it turns out that weights incorporating 
a factor $\gamma$ per diagonal step of path in the 20V model are non-longer integrable in general, namely cannot
be obtained by special choices of spectral parameters. On the other hand, it would be interesting to find a weighting 
of the 20V model equivalent to the $\gamma$-deformation of the tiling.

Another easily implementable natural weight in the particular context of the quarter-turn symmetric domino tilings of the holey Aztec square is a weight $\theta$ per path in the formulation as non-intersecting paths
on a cone of Section \ref{sec:cone}. The suitably modified partition function reads
$\det({\mathbb I}_n+\theta\, M_n)$ in the notations of Theorem \ref{thm:T4}: when evaluating the determinant via the Cauchy-Binet formula, we indeed pick a factor $\theta$ per row of the chosen submatrix of $M_n$.
It would also be interesting to find a weighting of the 20V model that corresponds to this.

Finally, like in the DPP case \cite{BDFPZ2}, further refinements could in principle be derived, by use of the
Desnanot-Jacobi relation. We hope to return to these questions in future work.

\subsection{The DWBC3 conjecture}
\begin{figure}
\begin{center}
\includegraphics[width=14cm]{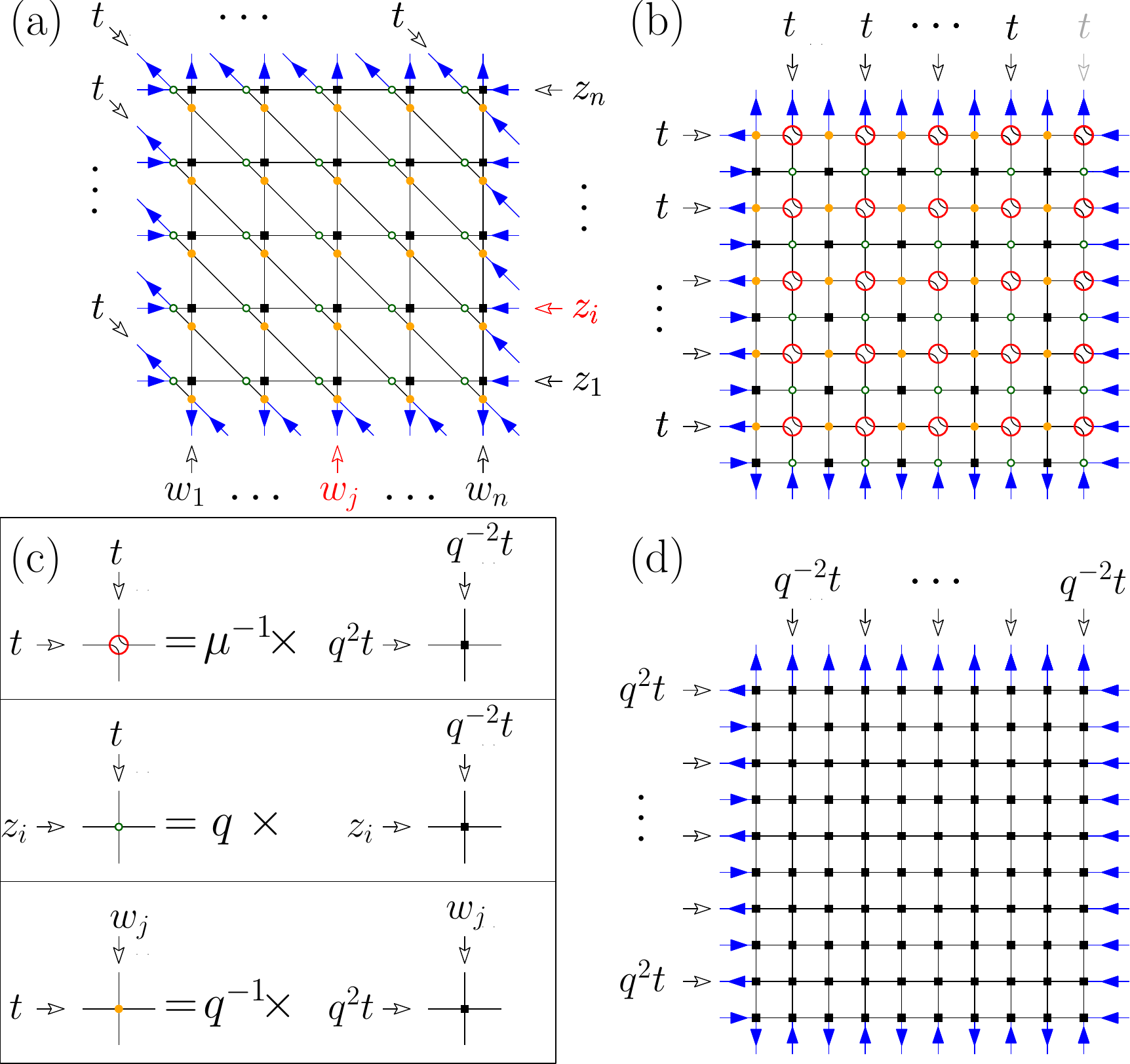}
\end{center}
\caption{\small Starting from the 20V-DWBC3 model on an $n\times n$ grid in its equivalent Kagome lattice formulation (a), 
we deform the diagonal lines so as to form an intermediate square grid of size $2n\times 2n$ (b) with four sublattices 
corresponding to the 3 original sublattices 1,2,3 of the Kagome lattice indicated 
respectively with black squares (1), empty circles (2), filled circles (3) plus an extra sublattice 4 of kissing points (circled), including 
the top right kissing point with an added trivial corner oriented line. 
Thanks to the equivalence to 6V weights indicated in (c), the model is finally reexpressed as a staggered 6V model
with appropriate spectral parameters (d).
}
\label{fig:staggered}
\end{figure}
Proving the DWBC3 conjecture \ref{DWBC3conj} is the next challenge. Let us mention that the 
partition function $Z^{{20V}_{BC3}}(n) $ of the 20V-DWBC3 model on an $n\times n$ grid, due to the integrable
nature of its weights, can be related to a partition function of the 6V model on a $2n\times 2n$ grid
of square lattice, by unraveling the diagonal lines in a way similar to that of Section \ref{sec:unravel}.

More precisely, we start from the general DWBC3 partition function, with arbitrary Kagome spectral
parameters $z_i,w_j$, $i,j=1,2,...,n$ and homogeneous $t_k=t$, $k=1,2,...,2n-1$.
First, we note that the $(n-1)$ bottom diagonal lines are ``imprisoned" due to the alternating orientations of
external arrows on the West and South boundaries, and the corresponding lines cannot be expelled like in the case of
Section  \ref{sec:unravel}. The top $n$ on the other hand could in principle be disposed of in the same way as before.
However, it proves more interesting to keep them and to deform the 
lines in the manner illustrated in Fig.~\ref{fig:staggered}. The idea is to place the former nodes of the sublattices 2 and 3 of the Kagome on square (sub-)lattices, still denoted by 2,3, and to
form artificial new nodes at the kissing between pairs of deformed diagonal lines, and positioned on a square sublattice denoted by 4 (see Fig.~\ref{fig:staggered}~(b)).
At these nodes, the kissing condition is guaranteed by choosing vertex weights $(a_4,b_4,c_4)= (1,0,1)$,
where the vanishing of the weight $b_4$ ensures the transmission of the arrow orientation from top to right 
and from left to bottom for these artificially created vertices.
By careful inspection of the vertex weights whenever some $t_k=t$ is involved, we find that the
weights for sublattices 2 and 3 may be expressed (up to a global multiplicative factor $q$ for sublattice 2 and $q^{-1}$ for sublattice 3) with the \emph{same definition} as that used 
for the sublattice 1 of the Kagome lattice (or
equivalently given by \eqref{eq:6Vweights}) provided we change $t\to q^2 t$ in the definition of horizontal spectral parameters
and $t\to q^{-2}t$ in the definition of the vertical ones (see Fig.~\ref{fig:staggered}~(c)). Remarkably, with these modified spectral parameters and this very same unified definition, 
the 6V weights on the sublattice 4 would be 
\begin{equation}
\label{sublat4} \big(A(q^2t ,q^{-2}t ),B(q^2t,q^{-2}t),C(q^2 t,q^{-2} t)\big)=(q^2-q^{-2})t\times(1,0,1)
\ ,
\end{equation}
which reproduce precisely the desired weights $(a_4,b_4,c_4)$ up to the proportionality factor $\mu=(q^2-q^{-2})t$. Otherwise stated, the weights 
on the four sublattices correspond (up to global factors $1$, $q$, $q^{-1}$ and $\mu^{-1}$ for sublattices $1,2,3,4$ respectively) to 6V weights for a consistent set of spectral parameters on the horizontal and vertical lines of a $2n\times 2n$ grid. 

The other spectral parameters $z_i$ and $w_j$ have remained unchanged in the process.
Taking them to the homoegeous 20V values $z_i=q^6 t$ and $w_j=q^{-6}t$, we obtain
a new 6V model now on a $2n\times 2n$ grid, but with {\it staggered} boundary conditions
and weights: 
\begin{eqnarray*}
(a_1,b_1,c_1)&=&(A(q^6 t,q^{-6}t),B(q^6 t,q^{-6}t),C(q^6 t,q^{-6}t))=\mu\, (1,\sqrt{2},1)\\
(a_2,b_2,c_2)&=&q\, (A(q^6 t,q^{-2}t),B(q^6 t,q^{-2}t),C(q^6 t,q^{-2}t))=q^3\mu\,(\sqrt{2},1,1)\\
(a_3,b_3,c_3)&=&q^{-1}\, (A(q^2 t,q^{-6}t),B(q^2 t,q^{-6}t),C(q^2 t,q^{-6}t))=q^{-3}\mu\,(\sqrt{2},1,1)\\
(a_4,b_4,c_4)&=&(1,0,1) \ ,
\end{eqnarray*}
respectively on the sublattices 1,2,3 (as in \eqref{eq:homval}) and 4.
Recalling the choice of spectral parameter $t$ \eqref{eq:valt}, leading to $\mu^3=1/2$, to ensure that the original 20V weights are all $1$,
the net result is a re-expression of the total number of configurations of the
20V-DWBC3 as:
$$ Z^{{20V}_{BC3}}(n) =\mu^{n^2}(q^3\mu)^{n^2}(q^{-3}\mu)^{n^2}\, Z^{6V_{\rm staggered}}_{WS}(2n) =\frac{1}{2^{n^2}} \, 
Z^{6V_{\rm staggered}}_{WS}(2n)\ ,$$
where $Z^{6V_{\rm staggered}}_{WS}(2n)$ denotes the partition function of the staggered 6V model 
on a $2n\times 2n$ grid, with weights
$(a,b,c)$ respectively equal to $(1,\sqrt{2},1)$, $(\sqrt{2},1,1)$, $(\sqrt{2},1,1)$ and $(1,0,1)$
on the sublattices 1,2,3,4, and with alternating external arrow orientations on the West and 
South boundaries (indicated by the index $WS$),
entering arrows on the East and outgoing arrows on the North (as in Fig.~\ref{fig:staggered}(d)).

In fact, the {\it same} transformation could be performed on the 20V model on an $n\times n$ grid
with arbitrary boundary conditions. In particular, this allows to re-express the partition function for the
20V-DWBC4 model as a staggered 6V model with the same definition of weights as above, but with
alternating external arrow orientations on all (West, South, East, North) boundaries:
$$ Z^{{20V}_{BC4}}(n)=\frac{1}{2^{n^2}} \, 
Z^{6V_{\rm staggered}}_{WSEN}(2n)\ .$$
The same transformation for the DWBC1 20V model leads to an alternative re-expression as:
$$ Z^{{20V}_{BC1}}(n)=\frac{1}{2^{n^2}} \, 
Z^{6V_{\rm staggered}}(2n)\ ,$$
where the staggered 6V model has domain wall boundary conditions, i.e. all horizontal external 
arrows pointing towards the domain, and all vertical external arrows pointing away from the domain.

There are no known general determinantal formulas for the partition functions of the staggered 6V model. However,
the latter can be solved by use of algebraic Bethe Ansatz, so there is some hope of transforming the
relevant partition functions into some simpler objects. We also leave this investigation to some future work.

\subsection{Symmetry classes}

Among the many questions that remain, an interesting direction which is similar to that in the case
of 6V/ASM/Rhombus Tilings  correspondences is the introduction of symmetry classes, obtained by restricting to configurations with some specified symmetry.

Our problem allows for less possible symmetries than the original 6V one, as we have broken the 
natural symmetries of the square domain in our choices of DWBC. Nevertheless, we have identified two
interesting symmetry classes for the DWBC1,2 models, one for DWBC3 and two for DWBC4.

In osculating Schr\"oder path language, the first symmetry, common to all DWBC1,2,3 and 4, is simply the symmetry under reflection w.r.t.
the first diagonal, which clearly respects all four choices of boundary conditions. In the APM language
(with 0 and sixth root of unity entries),
this symmetry amounts, for the APM $A$, to the condition:
$$ A_{n+1-j,n+1-i}^* = A_{i,j} \qquad (i,j=1,2,...,n)\  ,$$
where the complex conjugation interchanges $\omega$ and $\omega^2$.
We denote the APMs having this symmetry by Symmetric Alternating Phase Matrices (SAPM), which come in four types.

The second symmetry is more subtle and applies only to DWBC1 and 2. 
It is the composition of a reflection w.r.t. the second diagonal
and the complementation which interchanges occupied  and empty edges, while the central diagonal line
remains entirely occupied (DWBC1) or entirely empty (DWBC2). 
Note that in the cases of DWBC3 and 4 this transformation would be incompatible
with the boundary conditions.
In the APM language,
this amounts to the condition:
$$ A_{j,i}^* = A_{i,j} \qquad (i,j=1,2,...,n)\  , $$
namely that the corresponding APMs be Hermitian.
We denote the APMs having this symmetry by Transpose Conjugate Alternating Phase Matrices (TCAPM).

The last symmetry occurs only for DWBC4: it is the rotation of the grid by $180^\circ$.
In the APM language, this amounts to the condition:
$$A_{n+1-i,n+1-j}=-A_{i,j} \qquad (i,j=1,2,...,n)\ .$$
We denote the APM having this symmetry by Half-Turn (symmetric) Alternating Phase Matrices (HTAPM).

Using transfer matrix techniques, we found the following sequences for the various symmetry classes of APM:
\begin{eqnarray*}
{\rm Type \ 1,2:}&& {\rm SAPM:} \qquad 1,\ 3,\ 13,\ 85,\ 861 \\
	        && {\rm TCAPM:}\quad  \,\, 1,\ 2,\ 6,\ 28,\ 204 \\
{\rm Type\  3:}&& {\rm SAPM:}\qquad 1, \ 3, \ 15, \ 135, \ 2223\\
{\rm Type\ 4:}&& {\rm SAPM:}\qquad 1, \ 3, \ 27, \ 639 \ \\	
		   && {\rm HTAPM:}\quad \,1, \ 1, \ 7, \ 53 \ \\
\end{eqnarray*}

We remark that the first 5 TCAPMs of type $1,2$ are enumerated by the $q=2$ q-Bell numbers $B_n(q)$
defined recursively by $B_0(q)=1$ and $B_{n+1}(q)=\sum_{k=0}^n {n \choose k}_q \, B_k(q)$
where ${n \choose k}_q=\prod_{i=1}^k (1-q^{n+1-i})/(1-q^i)$  is the q-binomial coefficient. It is tempting to make a conjecture, but clearly some extra numerical effort is needed here.

\subsection{Arctic phenomenon}

To conclude, we expect, at least for the case of 20V with DWBC1,2, the existence of an ``arctic phenomenon" similar to that observed
for ASMs \cite{CP2010,CPS,COSPO} as well as the more general 6V-DWBC \cite{CNP,CPZ}. This was
our original motivation for considering the 20V model with DWBC, and will be the subject of future work.

\appendix
\section{The passage from Eq.~\eqref{eq:IK} to Eq.~\eqref{eq:IKlimit} in the homogeneous case}
\label{appendixA}
We wish to get an expression for \eqref{eq:IK} when $z_i\to z$ and $w_j\to w$ for all $i$ and $j$.
Following \cite{BDFPZ1}, we start with the following identity, valid for any power series $f(z,w)$:
\begin{equation}
\frac{1}{\prod\limits_{1\leq i<j\leq n}(z_j-z_i)(w_j-w_i)}\det\limits_{1\leq i,j\leq n}\left(f(z_i,w_j)\right)=
\det\limits_{1\leq i,j\leq n}\left(f[z_1,\dots, z_i][w_1,\dots ,w_j]\right)
\label{eq:matid}
\end{equation}
where
\begin{equation}
f[z_1,\dots, z_i][w_1,\dots ,w_j]:=\sum_{k=1}^i\sum_{l=1}^j \frac{f(z_k,w_l)}{\prod\limits_{k'=1\atop k'\neq k}^i(z_k-z_{k'})
\prod\limits_{l'=1\atop l'\neq l}^j(w_l-w_{l'})}\ .
\label{eq:deffbracket}
\end{equation}
Indeed, introducing the lower triangular matrix
\begin{equation}
L[z_1,\dots, z_n]_{i,j}=\left\{
\begin{matrix}
\prod\limits_{k=1\atop k\neq j}^i\frac{1}{z_j-z_k} & \hbox{if}\ i\geq j \\ 
0 & \hbox{if}\ i<j \\
\end{matrix}
\right.
\label{eq:Lbracket}
\end{equation}
and the matrix $H$ with elements $H_{i,j}=f(z_i,w_j)$, the left hand side of \eqref{eq:matid} is nothing but the product $\det(L[z_1,\dots, z_n])\det(L[w_1,\dots, w_n]^{t})\det(H)$
while the right hand side is nothing but $\det(L[z_1,\dots, z_n])\cdot H\cdot L[w_1,\dots, w_n]^{t})$ so that the identity follows immediately.

We may now evaluate
\begin{equation*}
\begin{split}
f[z_1,\dots, z_i][w_1,\dots ,w_j]&=\left(\frac{1}{2{\rm i}\, \pi}\right)^2
\oint dt\oint dt'\ \frac{1}{\prod\limits_{k=1}^i(t-z_k)}\frac{1}{\prod\limits_{l=1}^j(t'-z_l)}\ f(t,t')
\\
&= \sum_{m,m'=0}^\infty f_{m,m'} \oint dt\oint dt'\ \frac{1}{\prod\limits_{k=1}^i(t-z_k)}\frac{1}{\prod\limits_{l=1}^j(t'-z_l)}t^m\, t'^{m'}\\
&=\sum_{m,m'=0}^\infty f_{m,m'} \, {\rm Res}_\infty\frac{t^m}{\prod\limits_{k=1}^i(t-z_k)} \, {\rm Res}_\infty\frac{t^{m'}}{\prod\limits_{l=1}^j(t-w_l)}
\end{split}
\end{equation*}
where we introduced the coefficient $f_{m,m'}$ of the series expansion  $f(z,w)=\sum\limits_{m,m'=0}^\infty f_{m,m'}z^m w^{m'}$ and where the contours in the integrals were deformed so as
to pick the residue at infinity.  Clearly this residue is non-zero only if $m\geq i-1$ (respectively $m'\geq j-1$) with the result
\begin{equation*}
{\rm Res}_\infty\frac{t^m}{\prod\limits_{k=1}^i(t-z_k)}=
\sum_{p_1,p_2,\dots, p_i \geq 0 \atop \sum\limits_{k=1}^i p_k=m+1-i} \prod_{k=1}^{i} z_k^{p_k}=h_{m+1-i}(z_1,\dots,z_i)
\end{equation*}
in terms of the complete symmetric polynomial $h_m$.
This yields 
\begin{equation*}
f[z_1,\dots, z_i][w_1,\dots ,w_j]=
\sum_{m\geq i-1\atop m'\geq j-1} f_{m,m'} h_{m+1-i}(z_1,\dots, z_i)h_{m'+1-j}(w_1,\dots, w_j)
\end{equation*}
and in particular
\begin{equation*}
\begin{split}
f[\underbrace{z,\dots, z}_{i}][\underbrace{w,\dots ,w}_{j}]=&
\sum_{m\geq i-1\atop m'\geq j-1} f_{m,m'} h_{m+1-i}(\underbrace{z,\dots, z}_{i})h_{m'+1-j}(\underbrace{w,\dots, w}_{j})\\
&=
\sum_{m\geq i-1\atop m'\geq j-1} f_{m,m'}\ {m \choose i-1}\, z^{m+1-i} \ {m' \choose j-1}\, w^{m'+1-j}\\
&=f(z+r,w+s)\vert_{r^{i-1}s^{j-1}}\ . \\
\end{split}
\end{equation*}
This leads to the desired limit
\begin{equation*}
\frac{1}{\prod\limits_{1\leq i<j\leq n}(z_j-z_i)(w_j-w_i)}\det\limits_{1\leq i,j\leq n}\left(f(z_i,w_j)\right)\underset{z_k\to z\atop  w_l\to w}{\rightarrow}
\det\limits_{1\leq i,j\leq n}\left(f(z+r,w+s)\vert_{r^{i-1}s^{j-1}}\right)\ .
\end{equation*}
Using 
\begin{equation*}
\frac{1}{a(i,j)\, b(i,j)}=\frac{1}{z_i-w_j}\ \frac{1}{q^{-2}z_i-q^2w_j}=\frac{q^2}{(1-q^4)w_j}\left(\frac{1}{z_i-w_j}-\frac{1}{z_i-q^4w_j}\right)\ , 
\end{equation*}
we deduce
\begin{equation*}
\begin{split}
&\frac{\prod\limits_{i=1}^n c(i,i)\prod\limits_{i,j=1}^n\left(a(i,j)\, b(i,j)\right)}{\prod\limits_{1\leq i<j\leq n}(z_i-z_j)(w_j-w_i)}\det\limits_{1\leq i,j\leq n}\left( \frac{1}{a(i,j)\, b(i,j)}\right) 
\\
&=\frac{\prod\limits_{i=1}^n c(i,i)\prod\limits_{i,j=1}^n\left(a(i,j)\, b(i,j)\right)\prod\limits_{j=1}^n\left(\frac{q^2}{(1-q^4)w_j}\right)}{\prod\limits_{1\leq i<j\leq n}(z_i-z_j)(w_j-w_i)}\det\limits_{1\leq i,j\leq n}\left( \frac{1}{z_i-w_j}-\frac{1}{z_i-q^4w_j}\right) 
\\
&\underset{z_k\to z\atop  w_l\to w}{\rightarrow} (-1)^{\frac{n(n-1)}{2}}\prod_{i=1}^n \left((q^2-q^{-2})\sqrt{z\, w}\right)\prod_{i,j=1}^n\left((z-w)(q^{-2}z-q^2 w)\right)
\prod_{j=1}^n \frac{q^2}{(1-q^4)w}\\
& \quad \times 
\det\limits_{1\leq i,j\leq n}\left.\left(\left(\frac{1}{(z+r)-(w+s)}-\frac{1}{(z+r)-q^4(w+s)}\right)\right\vert_{r^{i-1}s^{j-1}}\right)\ .\\
\end{split}
\end{equation*}
This is nothing but  \eqref{eq:IKlimit}. 

\section{A proof of Eq.~\eqref{eq:IKlimitbis}}
\label{appendixB}
We now wish to get an expression for \eqref{eq:IK} when $z_i\to z$, $i=1,\dots, n$, $w_j\to w$, $j=1,\dots n-1$ and $w_n\to w\ u$.
Following again \cite{BDFPZ1}, we have the following general identity:
\begin{equation}
\frac{1}{\prod\limits_{1\leq i<j\leq n}(z_j-z_i)\prod\limits_{1\leq i<j\leq n-1}(w_j-w_i)}\det\limits_{1\leq i,j\leq n}\left(f(z_i,w_j)\right)=
\det\limits_{1\leq i,j\leq n}\left(
\begin{matrix}
f[z_1,\dots, z_i][w_1,\dots ,w_j]& j\leq n-1\\
f[z_1,\dots, z_i][w_n]& j=n\\
\end{matrix}
\right)
\label{eq:matidbis}
\end{equation}
where $f[z_1,\dots, z_i][w_1,\dots ,w_j]$ is as in \eqref{eq:deffbracket}. This identity is obtained as in Appendix~\ref{appendixA}
by multiplying the matrix $H$ with elements $H_{i,j}=f(z_i,w_j)$
to the left by the matrix $L[z_1,\dots, z_n]$ of \eqref{eq:Lbracket} and to the right by  the transpose of the matrix $L^{(n)}[w_1,\dots, w_{n-1}]$
with elements
\begin{equation*}
L^{(n)}[w_1,\dots, w_{n-1}]_{i,j}=\left\{
\begin{matrix}
\prod\limits_{k=1\atop k\neq j}^i\frac{1}{w_j-w_k} & \hbox{if}\ n-1\geq  i\geq j\\
\delta_{n,j} &  \hbox{if}\ i=n\\
0 & \hbox{if}\ i<j \\
\end{matrix}
\right.\ ,
\end{equation*}
then taking the determinant.
We have in particular in the desired limit
\begin{equation*}
\begin{split}
& \frac{1}{\prod\limits_{1\leq i<j\leq n}(z_j-z_i)\prod\limits_{1\leq i<j\leq n-1}(w_j-w_i)}\det\limits_{1\leq i,j\leq n}\left(f(z_i,w_j)\right)\\
& \qquad \qquad \underset{z_k\to z\ , \  w_{l<n}\to w \atop w_n\to w\, u}{\rightarrow}
\det\limits_{1\leq i,j\leq n}\left(
\begin{matrix}
f(z+r,w+s)\vert_{r^{i-1}s^{j-1}}& j\leq n-1 \\
f(z+r,w\, u)\vert_{r^{i-1}}& j=n 
\end{matrix}
\right)\\
\end{split}
\end{equation*}
We now deduce
\begin{equation*}
\begin{split}
&\frac{\prod\limits_{i=1}^n c(i,i)\prod\limits_{i,j=1}^n\left(a(i,j)\, b(i,j)\right)}{\prod\limits_{1\leq i<j\leq n}(z_i-z_j)(w_j-w_i)}\det\limits_{1\leq i,j\leq n}\left( \frac{1}{a(i,j)\, b(i,j)}\right) 
\\
&=\prod\limits_{i=1}^{n-1} c(i,i)\prod\limits_{i=1}^n\prod\limits_{j=1}^{n-1}\left(a(i,j)\, b(i,j)\right)\prod\limits_{j=1}^{n-1}\left(\frac{q^2}{(1-q^4)w_j}\right)\\
&\qquad \times c(n,n)\prod\limits_{i=1}^n\left(a(i,n)\, b(i,n)\right)\left(\frac{q^2}{(1-q^4)w_n}\right)\frac{1}{\prod\limits_{i=1}^{n-1}(w_n-w_i)}
\\
&\qquad \times \frac{1}{\prod\limits_{1\leq i<j\leq n}(z_i-z_j)\prod\limits_{1\leq i<j\leq n-1}(w_j-w_i)}\det\limits_{1\leq i,j\leq n}\left( \frac{1}{z_i-w_j}-\frac{1}{z_i-q^4w_j}\right) 
\\
&\!\!\!\!\!\!\!\! \underset{z_k\to z\ , \  w_{l<n}\to w \atop w_n\to w\, u}{\rightarrow} (-1)^{\frac{n(n-1)}{2}}\prod_{i=1}^{n-1} \left((q^2-q^{-2})\sqrt{z\, w}\right)\prod_{i=1}^{n}\prod_{j=1}^{n-1}\left((z-w)(q^{-2}z-q^2 w)\right)
\prod_{j=1}^{n-1} \frac{q^2}{(1-q^4)w}\\
&\qquad \times (q^2-q^{-2})\sqrt{z\, w\, u}\prod_{i=1}^{n}\left((z-w\, u)(q^{-2}z-q^2 w\, u)\right)\frac{q^2}{(1-q^4)w\, u}\ \frac{1}{\prod\limits_{i=1}^{n-1}(w\, u-w)}
\\
& \qquad \times 
\det\limits_{1\leq i,j\leq n}\left(
\begin{matrix}
\left.\left(\frac{1}{(z+r)-(w+s)}-\frac{1}{(z+r)-q^4(w+s)}\right)\right\vert_{r^{i-1}s^{j-1}} & j\leq n-1\\
\left.\left(\frac{1}{(z+r)-w\ u}-\frac{1}{(z+r)-q^4w\, u}\right)\right\vert_{r^{i-1}} & j=n\\
\end{matrix}
\right)\ .
\\
\end{split}
\end{equation*}
This is nothing but \eqref{eq:IKlimitbis}.

\bibliographystyle{amsalpha} 

\bibliography{20V}

\end{document}